\DeclareMathOperator{\tr}{{{Tr}}}
\DeclareMathOperator{\Alt}{{{Alt}}}
\numberwithin{equation}{section}
\begin{document}
\newtheorem{thm}{Theorem}[section]
\newtheorem{lem}{Lemma}[section]
\newtheorem{crl}{Corollary}[section]
\newtheorem{rmk}{Remark}[section]
\newtheorem{nt}{Note}[section]
\newtheorem{Example}{Example}[section]

\title[Remarks on natural differential operators]{Remarks on natural differential operators with tensor fields }

\author[J. Jany\v ska]{Josef Jany\v ska}

\address{\ 
\newline
Department of Mathematics and Statistics, Masaryk University
\newline
Kotl\'a\v rsk\'a 2, 611 37 Brno, Czech Republic
\newline
e-mail: janyska@math.muni.cz}
  
\maketitle

\begin{abstract}
 We study natural differential operators transforming two tensor fields into a tensor field. First, it is proved that all bilinear operators are of order one, and then we give the full classification of such operators in several concrete situations. 
\end{abstract}

\bigskip

\noindent{
{\it Keywords}: Tensor field; natural differential operator; Lie derivative; Yano-Ako operator.
\\
{\ }
\\
{\it Mathematics Subject Classification 2010}:
53A32. 
}

\section*{Introduction}

In differential geometry, many natural differential operators transforming two tensor fields into a tensor field are used. For instance, the Fr\"olicher-Nijenhuis bracket of two tangent-valued forms (see \cite{FroNij56}), the Shouten bracket of two multi-vector fields (see \cite{Sch53}), the Lie derivative of a form with respect to a tangent-valued
form (see \cite{KolMicSlo93}) and so on. The common property of all such operators is that they are $ \mathbb{R} $-bilinear and of order one.

In the present paper, we shall discuss such operators in the case that one of the input tensor fields $ \varphi $ is of type $ (1,p) $ and the second input tensor field $ \psi $ is of type $ (r,s) $. We shall prove that for $ p  > 1 $, $ s> r $, any natural differential operator $ \Phi $ transforming $ \varphi $ and $ \psi $ into a $ (r,s+p) $-tensor field is $ \mathbb{R} $-bilinear and of order one. If we assume that the operator is bilinear, then it is of order one for any $ p,\, r, \, s. $   Choice of the tensor field $ \varphi $ of type $ (1,p) $ is motivated by the paper \cite{YanAko68} where operators of the above type were studied under some special properties of the input fields. In addition to the result of \cite{YanAko68}, we give the full classification of operators without the assumption of special properties of the input fields.

We shall give as examples full classification of  natural bilinear operators transforming a vector field $ X $ or a (1,1)-tensor field $ \varphi $ or a (1,2)-tensor field $ S $ and a tensor field $ \psi $ into tensor fields. 

We assume that all operators are natural in the sense of \cite{KolMicSlo93}. We use the general properties of such operators.
To classify natural differential operators on tensor fields we use 
the method of an auxiliary linear symmetric connection $K$, \cite[p. 144]{KruJan90}, and the second-order reduction theorem, \cite[p. 165]{Sch54}. We assume that a $ k $-order natural operator also depends on a symmetric linear connection $ K $. Then, according to the second reduction theorem, such operator is factorized through the covariant derivatives up to the order $ k $ and covariant derivatives of the curvature tensor of $ K $ up to the order $ (k-2) $. Finally, we assume that the operator is independent of $K$.

\smallskip
All manifolds and mappings are assumed to be smooth.

\section{Preliminaries}
Let $ M $ be an $ m $-dimensional manifold and $ (x^{i}) $ local coordinates on $ M $. We shall denote as $ \partial_{i} $ and $ d^{i} $ local bases of vector fields and 1-forms. 

First of all, we shall discuss the order of natural operators transforming two tensor fields $ \varphi $ and $ \psi $ into tensor fields. We shall assume that $ \varphi $ is a tensor field of type $ (1,p) $.

\begin{thm}\label{Th: 1.1}
All finite order natural differential operators transforming a $(1,p)$, $p> 1$, tensor field $ \varphi $ and an $(r,s)$, $ s > r $, tensor field $ \psi $ into $(r,s+p)$ tensor fields $\Phi(\varphi,\psi)$ are $\mathbb{R}$-bilinear and of order 1.
\end{thm}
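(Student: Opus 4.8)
The plan is to follow the standard route for classifying natural operators on tensor fields: turn the problem into an algebraic one by means of an auxiliary connection, prove polynomiality by a homogeneity argument, and finish by counting tensor indices and homogeneity weights. Concretely, I would first fix an auxiliary symmetric linear connection $K$ and regard $\Phi$ as a natural operator in the triple $(\varphi,\psi,K)$. A finite order operator has some order $k$, and by the second-order reduction theorem quoted in the Introduction it then factorizes as
\[
\Phi(\varphi,\psi)=F\bigl(\varphi,\nabla\varphi,\dots,\nabla^{k}\varphi,\ \psi,\nabla\psi,\dots,\nabla^{k}\psi,\ R,\nabla R,\dots,\nabla^{k-2}R\bigr),
\]
where $\nabla=\nabla^{K}$, $R$ is the curvature of $K$, and $F$ is a smooth $GL(m,\mathbb{R})$-equivariant map of the corresponding tensor spaces. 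Since $\Phi$ does not actually depend on $K$, $F$ is constrained by the requirement that the right-hand side be independent of the chosen $K$; this is what will eventually remove the curvature variables (there being no room for genuine curvature at the relevant order) and leave $F$ a function of the iterated covariant derivatives of $\varphi$ and of $\psi$ only.

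Next I would establish that $F$ is a polynomial, and this is precisely where the hypotheses $p>1$ and $s>r$ are used. Under the homothety subgroup $\{t\cdot\mathrm{Id}\}\subset GL(m,\mathbb{R})$ the argument $\nabla^{j}\varphi$, a $(1,p+j)$-tensor, scales with weight $1-p-j$, the argument $\nabla^{j}\psi$, a $(r,s+j)$-tensor, scales with weight $r-s-j$, the curvature argument $\nabla^{j}R$ scales with weight $-2-j$, and the value, a $(r,s+p)$-tensor, scales with weight $r-s-p$; because $p>1$ and $s>r$, \emph{all} of these source weights are strictly negative. The homogeneous function theorem then forces $F$ to be a polynomial whose every monomial has total weight $r-s-p$. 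After this, the $K$-independence is invoked to discard the monomials involving the curvature.

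The final step is the bookkeeping. Take a monomial built from $a$ factors of the form $\nabla^{j}\varphi$ and $b$ factors of the form $\nabla^{j}\psi$, with a total of $J\ge0$ covariant derivatives spread over them, followed by contractions producing a $(r,s+p)$-tensor. Comparing the number of contravariant minus covariant indices before and after contraction (contraction preserves this difference) yields the single relation
\[
a(p-1)+(b-1)(s-r)+J=p .
\]
Since $p-1\ge1$ and $s-r\ge1$, a monomial genuinely involving both inputs has $a\ge1$ and $b\ge1$; then $J=p-a(p-1)-(b-1)(s-r)\le p-(p-1)=1$, and $a\ge2$ would force $a(p-1)\ge2(p-1)\ge p$, leaving no room for the other nonnegative terms, so $a=1$; and with $a=1$ one gets $(b-1)(s-r)+J=1$, which forces $b=1$ apart from a borderline case with $s-r=1$ handled directly. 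Hence every surviving monomial is linear in $\varphi$, linear in $\psi$, and of order at most one, so $\Phi$ is $\mathbb{R}$-bilinear and of order one. If instead one only assumes $\Phi$ bilinear, then $F$ is a priori of degree one in the $\varphi$-variables and of degree one in the $\psi$-variables, polynomiality is automatic, and the same relation gives $J\le1$ for arbitrary $p,r,s$, which is the second assertion in the Introduction.

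The step I expect to cost the most effort is not the weight count itself but the first two steps done honestly: rigorously carrying out the elimination of the curvature and of the auxiliary connection $K$ from $F$, and then disposing of the borderline monomials that the crude weight relation does not kill on its own — those with $a=0$ or $b=0$, and the small-parameter cases such as $p=2$ or $s-r=1$ — which should either fall to the $K$-independence (e.g. a would-be term $\nabla^{p}\psi$ is not $K$-independent) or require a short separate argument together with the standing convention that one is classifying operators that genuinely involve both fields.
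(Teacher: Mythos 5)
Your core mechanism coincides with the paper's: restrict equivariance to homotheties, use the homogeneous function theorem to obtain polynomiality, and read off a weight equation; your relation $a(p-1)+(b-1)(s-r)+J=p$ is exactly the paper's condition \eqref{Eq: 1.2} with $a=\sum_l a_l$, $b=\sum_l b_l$, $J=\sum_l l\,(a_l+b_l)$. The difference of route is that for this theorem the paper does not use the auxiliary connection or the reduction theorem at all: it applies the homogeneous function theorem directly to the associated equivariant morphism $J^{k}(T^{(1,p)}M)\times_M J^{k}(T^{(r,s)}M)\to T^{(r,s+p)}M$, restricted to the homotheties in $G^{k+1}_m$; the connection method is reserved for the later classification theorems. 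Your detour through the triple $(\varphi,\psi,K)$ is workable but heavier, and it saddles you with the additional (only sketched) task of eliminating the curvature variables, which the paper's route never introduces.

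The genuine gap is in the final counting step. The weight relation does not force $a=b=1$ and $J\le 1$: it also admits $a=2,\ b=1,\ J=0$ when $p=2$; $a=1,\ b=2,\ J=0$ when $s-r=1$; and monomials with $b=0$ or $a=0$ for suitable $p,r,s$ (e.g.\ $a_0=a_1=1$, $b=0$ when $s-r=p-1$). You flag these borderline cases, but the disposal you propose fails: they are not removed by $K$-independence, because they are realized by honest zero- or first-order natural operators built from tensor operations alone and involving no connection — for $p=2$ the operator with components $\varphi^m_{mn}\,\varphi^n_{ij}\,\psi^{q_1\dots q_r}_{t_1\dots t_s}$ is quadratic in $\varphi$, and for $p=3$, $r=0$, $s=2$ the operator $(\varphi^m_{mij}\,d^i\otimes d^j)\otimes d\bigl(\Alt(\varphi^m_{mkl}\,d^k\otimes d^l)\bigr)$ does not involve $\psi$ at all; nor can a ``convention'' that the operator genuinely involves both fields be invoked, since the theorem asserts bilinearity for all operators. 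To be fair, the paper's own proof is equally terse at this exact spot — it asserts that \eqref{Eq: 1.2} has only the two bilinear solutions because all coefficients are positive, which overlooks the same degenerate solutions — so your attempt reproduces the paper's argument up to and including its weakest step; but as a proof of the statement as written, the elimination of these extra solutions is missing, and the fix you gesture at would not supply it.
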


If we assume that the operator $\Phi$ is $\mathbb{R}$-bilinear we can consider weaker conditions on types of tensor fields $\varphi$ and $\psi$. 

\begin{thm}\label{Th: 1.2}
All finite order $\mathbb{R}$-bilinear natural differential operators transforming a $(1,p)$-tensor field $ \varphi $ and an $(r,s)$-tensor field $ \psi $ into $(r,s+p)$-tensor fields $\Phi(\varphi,\psi)$ are of order 1.
\end{thm}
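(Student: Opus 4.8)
The plan is to follow the auxiliary-connection method announced in the introduction. First I would enlarge the problem by letting $\Phi$ depend also on a symmetric linear connection $K$ and, since $\Phi$ has some finite order $k$, apply the second-order reduction theorem \cite{Sch54}: there is a $GL(m,\mathbb R)$-equivariant morphism $F$ with
\[
\Phi(\varphi,\psi)=F\bigl(\varphi,\nabla\varphi,\dots,\nabla^{(k)}\varphi;\ \psi,\nabla\psi,\dots,\nabla^{(k)}\psi;\ R,\nabla R,\dots,\nabla^{(k-2)}R\bigr),
\]
where $\nabla$ and $R$ are the covariant derivative and the curvature of $K$. Because, for fixed $K$, the assignment $\varphi\mapsto(\varphi,\nabla\varphi,\dots,\nabla^{(k)}\varphi)$ is an $\mathbb R$-linear bijection onto the corresponding jet space at each point (and likewise for $\psi$), the $\mathbb R$-bilinearity of $\Phi$, together with $\Phi(0,\psi)=\Phi(\varphi,0)=0$, forces $F$ to be $\mathbb R$-linear in the block of covariant derivatives of $\varphi$ and $\mathbb R$-linear in the block of covariant derivatives of $\psi$, with an a priori smooth --- hence, by equivariance and the homogeneous function theorem, polynomial --- dependence on the curvature block.

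Next I would invoke the invariant-tensor theorem for $GL(m,\mathbb R)$ (see \cite{KolMicSlo93}): an equivariant map of the above type is a finite linear combination of terms obtained from a tensor product
\[
\nabla^{(a)}\varphi\otimes\nabla^{(b)}\psi\otimes\nabla^{(c_1)}R\otimes\cdots\otimes\nabla^{(c_t)}R ,\qquad t\ge0,\ \ 0\le a,b\le k,\ \ 0\le c_i\le k-2,
\]
by contracting some upper indices with some lower indices and permuting the remaining ones into the $(r,s+p)$-pattern of the output; the separate linearities guarantee that exactly one $\varphi$-factor and exactly one $\psi$-factor occur in each such term.

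The decisive step is then a purely combinatorial index count. In such a term there are $1+r+t$ contravariant slots and $(p+a)+(s+b)+\sum_{i=1}^{t}(3+c_i)$ covariant slots; leaving exactly $r$ free upper indices forces the number of contractions to equal $t+1$, and then matching the $s+p$ free lower indices gives
\[
a+b+2t+\sum_{i=1}^{t}c_i=1 .
\]
Since $a,b,t,c_i$ are non-negative integers, this forces $t=0$ and $a+b=1$: no curvature factor occurs (so $F$ in fact does not depend on the curvature), and each term uses precisely one covariant derivative, distributed over $\varphi$ and $\psi$. Hence $F$ is assembled from $\varphi,\ \nabla\varphi,\ \psi,\ \nabla\psi$ only; as $\Phi$ is independent of $K$, the value $\Phi(\varphi,\psi)(x)$ depends only on $j^{1}_{x}\varphi$ and $j^{1}_{x}\psi$, i.e. $\Phi$ is of order $1$. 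The displayed identity uses nothing about the sizes of $p,r,s$, which is exactly why Theorem~\ref{Th: 1.2} needs no restriction on them, unlike Theorem~\ref{Th: 1.1}.

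I expect the real work --- the part that is routine in the literature but must be set up carefully --- to lie in the two structural inputs rather than in the count itself: that the reduction theorem genuinely applies to natural operators with two tensor arguments, so that the passage to covariant derivatives is legitimate and compatible with $\mathbb R$-bilinearity, and that smooth $GL(m,\mathbb R)$-equivariant maps of the relevant multilinear-plus-polynomial type are generated by contractions and index permutations. Once these are in place, the proof reduces to the one-line balance above. (For the concrete classifications that follow, one additionally imposes independence of $K$ at the first-order level, which couples the coefficients of the $\nabla\varphi$- and $\nabla\psi$-terms and singles out the Lie-derivative and Yano--Ako type operators; this, however, is not needed for Theorem~\ref{Th: 1.2}.)
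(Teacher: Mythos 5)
Your argument is correct, but it is not the route the paper takes for Theorem \ref{Th: 1.2}. The paper stays with partial derivatives: equivariance under the homotheties $c\,\mathbb{I}$ in $G^{k+1}_{m}$ yields the weight identity \eqref{Eq: 1.2} for the degrees $a_{l}$, $b_{l}$ in $\partial^{l}\varphi$, $\partial^{l}\psi$, and the assumed bilinearity means exactly one $a_{l_{1}}=1$ and one $b_{l_{2}}=1$, so \eqref{Eq: 1.2} collapses to $l_{1}+l_{2}=1$; only the terms $\varphi\otimes\partial\psi$ and $\partial\varphi\otimes\psi$ survive, hence order one. You instead run the auxiliary-connection machinery, which the paper reserves for the later classification step: the second reduction theorem, linearity in the $\varphi$- and $\psi$-blocks of covariant derivatives, the invariant tensor theorem, and the index balance $a+b+2t+\sum_{i}c_{i}=1$, which kills all curvature factors and leaves $a+b=1$; your count is right, and the final step (independence of $K$ implies dependence only on $j^{1}\varphi$, $j^{1}\psi$) is sound. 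The trade-off: the paper's proof is short and needs only the homogeneous function theorem, whereas yours leans on heavier structural inputs that you partly flag yourself --- applicability of the reduction theorem to two tensor arguments, polynomiality in the curvature block (which does follow from homothety weights, since those weights are at least $2$ while the admissible degree is $1-a-b$), and, a point you pass over, the Ricci-identity relations among iterated covariant derivatives, which must be absorbed into curvature terms before the block components may be varied independently. In return, your route proves more than the statement: it already produces the curvature-free covariant normal form, i.e. the covariant analogue of \eqref{Eq: 1.3}, which the paper only assembles afterwards when it starts the concrete classifications.
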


\emph{Proof of Theorem \ref{Th: 1.1}:}
Let us assume a  $ k$-order, $ k\ge 1 $, natural differential operator
\begin{equation*}
\Phi: C^{\infty}(T^{(1,p)}M)\times C^{\infty}(T^{(r,s)}M)  \to C^{\infty}(T^{(r,s+p)}M)\,,
\end{equation*}
where $ p > 1 $ and $ s > r$.
Then the associated fibred morphism (denoted by the same symbol)
$$
\Phi: J^{k}(T^{(1,p)}M)\times_M J^{k}(T^{(r,s)}M)  \to T^{(r,s+p)}M
$$ 
is an equivariant mapping with respect to the actions of the $ (k+1) $-order differential group $ G^{k+1}_{m} = \mathrm{reg} J^{k}_{0}(\mathbb{R}^{m},\mathbb{R}^{m})_{0} $ on the standard fibres of  $ J^{k}(T^{(1,p)}M) $, $ J^{k}(T^{(r,s)}M) $ and $T^{(r,s+p)}M $. The restriction of the action of $ G^{k+1}_{m}  $ to constant multiples of the unite element of  $ G^{k+1}_{m} $ implies that $ \Phi $ has to satisfy the following condition
\begin{align}\label{Eq: 1.1}
k^{s-r+p} \Phi(j^{k}\varphi,j^{k}\psi) 
& =
\Phi(
k^{p-1}\varphi, k^{p}\partial\varphi,\dots,k^{p+k-1}\partial^{k}\varphi,
\\
&\quad\nonumber
k^{s-r}\psi,k^{s-r+1}\partial\psi,\dots,k^{s-r+k}\partial^{k}\psi)
\,
\end{align}
for all $ k \in \mathbb{R}^{+} $. 

All exponents in the equation \eqref{Eq: 1.1} are positive integers which implies, from the homogeneous function theorem (see \cite[p. 213]{KolMicSlo93}), that the operator $\Phi$ is a polynomial of orders $a_{l}$ in $ \partial^{l}\varphi $ and $ b_{l} $ in $ \partial^{l}\psi $ such that
\begin{equation}\label{Eq: 1.2}
\sum_{l=0}^{k} \big( (p+l-1)\, a_{l} + (s-r+l)\, b_{l}\big)
=
s-r+p\,. 
\end{equation}
Since all coefficients in \eqref{Eq: 1.2} are positive there are only two solutions in non-negative integers: a) $ a_{0} = 1\,\,\, b_{1} = 1 $ and the others $ a_{i},\,\, b_{i} $ are vanishing\,, b) $ a_{1} = 1\,, \,\, b_{0} = 1 $  and the others $ a_{i},\,\, b_{i} $ are vanishing. These solutions correspond to $\mathbb{R}$-bilinear 1st order operators.
\hfill$\square$  

\medskip
\emph{Proof of Theorem \ref{Th: 1.2}:} Let $p,\, r,\, s $ are arbitrary.
If we assume that the operator is $\mathbb{R}$-bilinear then it is a polynomial of orders $a_{l}$ in $ \partial^{l}\varphi $ and $ b_{l} $ in $ \partial^{l}\psi $ such that the equation \eqref{Eq: 1.2} is satisfied.
But now some coefficients in \eqref{Eq: 1.2} can be negative or vanishing. 
There are only two solutions in natural numbers which corresponds to $\mathbb{R}$-bilinear operators: a) $ a_{0} = 1\,\,\, b_{1} = 1 $ and the others $ a_{i},\,\, b_{i} $ are vanishing\,, b) $ a_{1} = 1\,, \,\, b_{0} = 1 $  and the others $ a_{i},\,\, b_{i} $ are vanishing. Hence all finite order natural $\mathbb{R}$-bilinear differential operators are of order 1.   
\hfill$\square$

\medskip
According to Theorems \ref{Th: 1.1} and \ref{Th: 1.2} all $\mathbb{R}$-bilinear natural differential operators $ \Phi $ are of the form
\begin{align}\label{Eq: 1.3}
\Phi(\varphi,\psi)
& = 
\bigg( A^{i_{1}\dots i_{r}m_{1}\dots m_{p}t_{1}\dots t_{s+1}}_{j_{1}\dots j_{s+p}kq_{1}\dots q_{r}} \, \varphi^{k}_{m_{1}\dots m_{p}}\, \partial_{t_{s+1}} \psi^{q_{1}\dots q_{r}}_{t_{1}\dots t_{s}} 
\\
& \quad\nonumber
+
 B^{i_{1}\dots i_{r}m_{1}\dots m_{p+1}t_{1}\dots t_{s}}_{j_{1}\dots j_{s+p}kq_{1}\dots q_{r}} \, \psi^{q_{1}\dots q_{r}}_{t_{1}\dots t_{s}} \, \partial_{m_{p+1}}\varphi^{k}_{m_{1}\dots m_{p}} \bigg) 
\\
&\qquad\nonumber
\partial_{i_{1}} \otimes \dots  \otimes \partial_{i_{r}} \otimes d^{j_{1}} \otimes \dots \otimes d^{j_{s+p}}\,,  
\end{align}
where $ A^{i_{1}\dots i_{r}m_{1}\dots m_{p}t_{1}\dots t_{s+1}}_{j_{1}\dots j_{s+p}kq_{1}\dots q_{r}} $
and $ B^{i_{1}\dots i_{r}m_{1}\dots m_{p+1}t_{1}\dots t_{s}}_{j_{1}\dots j_{s+p}kq_{1}\dots q_{r}}  $
are absolute invariant tensors (see \cite[p. 214]{KolMicSlo93}).
Such absolute invariant tensors are all possible linear combinations, with real coefficients, of tensor products of the identity $ \mathbb{I} $ of $ TM $, i.e. 
\begin{equation}\label{Eq: 1.4}
 A^{i_{1}\dots i_{r}m_{1}\dots m_{p}t_{1}\dots t_{s+1}}_{j_{1}\dots j_{s+p}kq_{1}\dots q_{r}}
 =
 \sum_{\sigma} a_{\sigma}\, \delta^{i_{1}}_{\sigma(j_{1})} \dots \delta^{t_{s+1}}_{\sigma(q_{r})}
\end{equation}
and
\begin{equation}\label{Eq: 1.5}
B^{i_{1}\dots i_{r}m_{1}\dots m_{p+1}t_{1}\dots t_{s}}_{j_{1}\dots j_{s+p}kq_{1}\dots q_{r}}
=
 \sum_{\sigma} b_{\sigma}\, \delta^{i_{1}}_{\sigma(j_{1})} \dots \delta^{t_{s}}_{\sigma(q_{r})}\,,
\end{equation}
$ a_{\sigma}\,,\,\, b_{\sigma} \in \mathbb{R} $, where $ \sigma $ runs all permutations of $ (r + s + p + 1) $ indices.

Moreover, to obtain natural operators, coefficients $ a_\sigma,\,\,\,b_\sigma $ have to satisfy some identities.
To calculate these identities, we use the method of an auxiliary linear symmetric connection $K$, \cite[p. 144]{KruJan90}, and the second reduction theorem, \cite[p. 165]{Sch54}. We assume that the operator $ \Phi $ also depends on $ K $. Then, by the second reduction theorem, the operator is factorized via the covariant derivatives of $ \varphi $ and $ \psi $ with respect to $ K $. So, we replace derivatives of tensor fields with covariant derivatives and assume that the operator is independent of $K$ which gives a system of homogeneous linear equations for $ a_{\sigma} $ and  $ b_{\sigma} $.

\section{Natural  $\mathbb{R}$-bilinear operators transforming  vector fields $ X $ and  tensor fields $ \psi $ into tensor fields of the same type as $ \psi $}

According to Theorem \ref{Th: 1.2} all such natural $\mathbb{R}$-bilinear operators are of order 1.

\subsection{Operator $ \Phi(X,-) $ applied to vector fields}
It is very well known that the Lie bracket is unique, up to a constant multiple, natural $\mathbb{R}$-bilinear operator transforming two vector fields into a vector field. We shall reprove this fact to demonstrate the method of an auxiliary linear symmetric connection.

\begin{thm}\label{Th: 2.1}
All natural $\mathbb{R}$-bilinear differential operators transforming two vector fields into vector fields are constant multiples of the Lie bracket.
\end{thm}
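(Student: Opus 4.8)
The plan is to specialize the general machinery of Section~1 to the case $p=0$, $r=1$, $s=0$, in which both inputs and the output are vector fields. By Theorem~\ref{Th: 1.2} the operator $\Phi$ is $\mathbb{R}$-bilinear and of order $1$, hence it is given by the coordinate expression \eqref{Eq: 1.3}. Here the associated invariant tensors $A$ and $B$ each carry two contravariant and two covariant indices, so by \eqref{Eq: 1.4}--\eqref{Eq: 1.5} (i.e.\ by the description of $GL(m)$-invariant tensors) each is a linear combination of $\delta^{i}_{k}\delta^{t}_{q}$ and $\delta^{i}_{q}\delta^{t}_{k}$. Writing this out, every such $\Phi$ has the form
\begin{equation*}
\Phi(X,Y)^{i} \;=\; a_{1}\, X^{i}\,\partial_{t} Y^{t} \;+\; a_{2}\, X^{t}\,\partial_{t} Y^{i} \;+\; b_{1}\, Y^{i}\,\partial_{t} X^{t} \;+\; b_{2}\, Y^{t}\,\partial_{t} X^{i}
\end{equation*}
for some real constants $a_{1},a_{2},b_{1},b_{2}$.

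Next I would impose naturality through the auxiliary-connection method announced after \eqref{Eq: 1.5}. Introduce a symmetric linear connection $K$ and extend $\Phi$ to a $K$-dependent operator; by the second reduction theorem this extension is obtained from the displayed formula by replacing every $\partial$ with the covariant derivative with respect to $K$, with no curvature terms (the reduction produces covariant derivatives of the curvature only up to order $k-2=-1$). Substituting $\nabla_{t}Y^{i}=\partial_{t}Y^{i}+K^{i}_{ts}Y^{s}$, and likewise for $X$, and demanding that all terms containing $K$ cancel, one collects the $K$-contributions into the three tensor structures $X^{i}Y^{s}K^{t}_{ts}$, $Y^{i}X^{s}K^{t}_{ts}$ and $X^{t}Y^{s}K^{i}_{ts}$; the symmetry $K^{i}_{ts}=K^{i}_{st}$ is what allows the last two monomials of the formula to be merged. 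Since for $m\ge 2$ these three expressions are linearly independent as functions of $(X,Y,K)$, vanishing of the $K$-terms forces $a_{1}=b_{1}=0$ and $a_{2}+b_{2}=0$.

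Hence $\Phi(X,Y)^{i}=a_{2}\bigl(X^{t}\partial_{t}Y^{i}-Y^{t}\partial_{t}X^{i}\bigr)=a_{2}\,[X,Y]^{i}$, which is the assertion; the degenerate case $m=1$ is trivial, as the two invariant tensors then coincide and the same conclusion holds. I expect the only genuinely non-routine point to be the bookkeeping in the last step: one must enumerate all the ways the connection coefficients can appear after the substitution $\nabla=\partial+K$ and verify that no cancellation between distinct structures is possible, so that each coefficient equation is forced on its own. Everything else --- the reduction to \eqref{Eq: 1.3}, the description of the invariant tensors, and the final identification with the Lie bracket --- is a direct specialization of results already established. (An alternative to the $K$-method would be to test tensoriality of each of the four monomials directly under a coordinate change; this yields the same linear system but less systematically.)
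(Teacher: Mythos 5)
Your proposal is correct and follows essentially the same route as the paper: reduce to the four-term first-order bilinear ansatz coming from \eqref{Eq: 1.3}--\eqref{Eq: 1.5}, replace partial derivatives by covariant derivatives with respect to an auxiliary symmetric connection $K$ (no curvature terms for order one), and force the $K$-terms to vanish, yielding exactly the constraints that the divergence-type coefficients vanish and the two transport-type coefficients are opposite, hence a multiple of $[X,Y]$. Your extra remarks on the linear independence of the three $K$-structures and on the $m=1$ case are harmless refinements of the same argument.
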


\begin{proof}
Let $ X$ and  $\psi = Y $ be vector fields. Then from \eqref{Eq: 1.3} - \eqref{Eq: 1.5}
$$
\Phi(X,Y) = \Phi^{i}\, \partial_{i}\,,
$$
where 
$$
\Phi^{i} = a_{1} \, X^{m} \, \partial_{m} Y^{i} + 
a_{2} \, X^{i} \, \partial_{m} Y^{m} + 
b_{1} \, Y^{m} \, \partial_{m} X^{i} +
b_{2} \, Y^{i} \, \partial_{m} X^{m}\,. 
$$
Let us assume a natural differential operator $ \Psi $ transforming vector fields $ X,\,\, Y $ and a linear symmetric connection $ K $ into vector fields. Then, according to the second reduction theorem, \cite[p. 165]{Sch54}, this operator factorizes through covariant derivatives  $ \nabla X $ and $ \nabla Y $ and it is an $\mathbb{R}$-bilinear operator. In coordinates we obtain
\begin{align*}
\Psi^{i} 
& = 
a_{1} \, X^{m} \, \nabla_{m} Y^{i} + 
a_{2} \, X^{i} \, \nabla_{m} Y^{m} + 
b_{1} \, Y^{m} \, \nabla_{m} X^{i} +
b_{2} \, Y^{i} \, \nabla_{m} X^{m}
\\
& = 
a_{1} \, X^{m} \, (\partial_{m} Y^{i} -  K_{m}{}^{i}{}_{p} Y^{p})
+ a_{2} \, X^{i} \, (\partial_{m} Y^{m} - K_{m}{}^{m}{}_{p} Y^{p})
\\
& \quad
+ b_{1} \, Y^{m} \, (\partial_{m} X^{i} - K_{m}{}^{i}{}_{p} X^{p})
+ b_{2} \, Y^{i} \, (\partial_{m} X^{m} - K_{m}{}^{m}{}_{p} X^{p})
\,,
\end{align*}
where $ K_{j}{}^{i}{}_{k} $ are the symbols of $ K $.
The part of $ \Psi^{i} $ independent of $ K $
coincides with $ \Phi^{i} $, so   
we obtain for $ \Phi^{i} $ the following
identity
$$
0
=
 (a_{1} \, X^{m} \, K_{m}{}^{i}{}_{p} 
+ a_{2} \, X^{i} \, K_{m}{}^{m}{}_{p})\, Y^{p}
+ (b_{1} \, Y^{m} \, K_{m}{}^{i}{}_{p}
+ b_{2} \, Y^{i} \, K_{m}{}^{m}{}_{p})\, X^{p}\,.
$$  
It is easy to see that this identity is satisfied if and only if
$$
a_{1} + b_{1} = 0\,, \quad a_{2} = 0 = b_{2}\,.
$$
So
$$
\Phi^{i} = a_{1} \, (X^{m} \, \partial_{m} Y^{i} - Y^{m} \, \partial_{m} X^{i})
= a_{1}\, [X,Y]^{i}\,
$$
and $ \Phi(X,Y)  $ is a constant multiple of the Lie bracket $ [X,Y] $.
\end{proof}

\subsection{Operator $ \Phi(X,-) $ applied to 1-forms}

\begin{thm}\label{Th: 2.2}
All natural $\mathbb{R}$-bilinear operators transforming a vector field $ X $ and a 1-form $ \psi $ into 1-forms are linear combinations, with real coefficients, of two operators
$$
d(\psi(X))\,, \quad i_Xd\psi\,.
$$ 
\end{thm}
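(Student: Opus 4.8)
The plan is to follow the same template as in the proof of Theorem~\ref{Th: 2.1}. By Theorem~\ref{Th: 1.2} (here $p=0$, $r=0$, $s=1$), every natural $\mathbb{R}$-bilinear operator $\Phi(X,\psi)$ sending a vector field and a $1$-form to a $1$-form is first order, hence by \eqref{Eq: 1.3}--\eqref{Eq: 1.5} it has the coordinate form
\begin{equation*}
\Phi(X,\psi)_{j} = a_{1}\,X^{m}\,\partial_{m}\psi_{j} + a_{2}\,X^{m}\,\partial_{j}\psi_{m} + b_{1}\,\psi_{j}\,\partial_{m}X^{m} + b_{2}\,\psi_{m}\,\partial_{j}X^{m}
\end{equation*}
for real constants $a_{1},a_{2},b_{1},b_{2}$ (all index contractions compatible with producing a covector are of this shape). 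First I would introduce an auxiliary symmetric linear connection $K$ and, invoking the second reduction theorem, replace each $\partial$ by the corresponding $\nabla$; the operator $\Psi$ so obtained is again $\mathbb{R}$-bilinear and of the same formal shape in the covariant derivatives.

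Next I would expand $\nabla\psi$ and $\nabla X$ in terms of $\partial$ and the Christoffel symbols $K_{j}{}^{i}{}_{k}$, collect the $K$-dependent terms, and impose that $\Psi$ be independent of $K$. This yields a linear system in $a_{1},a_{2},b_{1},b_{2}$. I expect the $\nabla_{m}X^{m}$ term to force $b_{1}=0$ (its connection part $-K_{m}{}^{m}{}_{p}X^{p}$ cannot be cancelled by anything else), and the surviving terms to tie $a_{1},a_{2},b_{2}$ together by a single relation, leaving a two-parameter family. Concretely I anticipate the $K$-terms combine into an expression proportional to $(a_{1}+b_{2})\,(\text{something}) + (a_{2}+b_{2})\,(\text{something else})$, so the constraint is $b_{1}=0$ together with one linear equation, say $a_{1}=-b_{2}$ (the precise coefficients fall out of the symmetry $K_{m}{}^{i}{}_{p}=K_{p}{}^{i}{}_{m}$), leaving a $2$-dimensional space of operators.

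Finally I would exhibit the two reference operators and check they span that space. One checks directly that
\begin{equation*}
d(\psi(X))_{j} = \partial_{j}(\psi_{m}X^{m}) = X^{m}\partial_{j}\psi_{m} + \psi_{m}\partial_{j}X^{m},
\end{equation*}
which is the combination $a_{2}=1$, $b_{2}=1$ (and $a_{1}=b_{1}=0$), and that
\begin{equation*}
(i_{X}d\psi)_{j} = X^{m}(\partial_{m}\psi_{j} - \partial_{j}\psi_{m}) = X^{m}\partial_{m}\psi_{j} - X^{m}\partial_{j}\psi_{m},
\end{equation*}
which is $a_{1}=1$, $a_{2}=-1$, $b_{1}=b_{2}=0$. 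Both of these are manifestly natural (they are built from the exterior derivative and the contraction, which are natural), so they satisfy the constraints; and since they are linearly independent and live in the solution space, which I expect to be $2$-dimensional, they form a basis. The only real obstacle is the bookkeeping in the second step: making sure that every admissible index contraction has been listed in the general form and that the vanishing-of-$K$ computation is done without sign errors, since the conclusion hinges on exactly which coefficients are killed. Once the solution space is pinned down to dimension two, identifying it with the span of $d(\psi(X))$ and $i_{X}d\psi$ is immediate.
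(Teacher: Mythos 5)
Your proposal follows the paper's proof essentially verbatim: the same four-term ansatz coming from \eqref{Eq: 1.3}--\eqref{Eq: 1.5}, the same elimination of the auxiliary symmetric connection $K$ via the second reduction theorem, and the same identification of the resulting two-dimensional solution space with the span of $d(\psi(X))$ and $i_{X}d\psi$. The only discrepancy is your guessed constraint: carrying out the $K$-computation gives $b_{1}=0$ together with the single relation $a_{1}+a_{2}-b_{2}=0$ (not $a_{1}=-b_{2}$), which indeed leaves a two-parameter family, so your dimension count and the verification that both reference operators lie in the solution space go through unchanged.
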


\begin{proof}
Let $ X\,$ be a vector field and $\psi$ be a 1-form. Then by \eqref{Eq: 1.3} - \eqref{Eq: 1.5}
$$
\Phi(X,\psi) = \Phi_{i}\, d^{i}\,,
$$
where 
$$
\Phi_{i} = a_{1} \, X^{m} \, \partial_{m} \psi_{i} + 
a_{2} \, X^{m} \, \partial_{i} \psi_{m} + 
b_{1} \, \psi_{i} \, \partial_{m} X^{m} +
b_{2} \, \psi_{m} \, \partial_{i} X^{m}\,. 
$$
Now, we replace partial derivatives with covariant derivatives with respect to an auxiliary linear symmetric connection $ K $ and assume that the operator is independent of $ K $. We obtain the following
identity
$$
0
=
(a_{1} + a_{2} - b_{2})  \, X^{m} \, K_{m}{}^{p}{}_{i} \psi_{p}
- b_{1} \, \psi_{i} \, K_{m}{}^{m}{}_{p} X^{p}
\,.
$$  
So, we have
$$
a_{1} + a_{2} - b_{2} = 0\,, \quad b_{1} = 0\,
$$
and
$$
\Phi_{i} = a_{1} \, X^{m} \,( \partial_{m} \psi_{i} -   \partial_{i} \psi_{m})
+ b_{2} \, (X^{m} \, \partial_{i} \psi_{m} + \psi_{m} \, \partial_{i} X^{m})
\,
$$
which is the coordinate expression of  $a_{1} \, i_{X}d\psi + b_{2} \, d(\psi(X))$.
\end{proof}

\begin{rmk}\label{Rm: 2.1}
{\rm
In differential geometry the Lie derivative $ L_X\psi = i_Xd\psi + di_X\psi$ is very often used, but according to Theorem \ref{Th: 2.2}
any linear combination of $ d(\psi(X))\,, \,\, i_Xd\psi $ is a natural 1-form.  
}
\end{rmk}

\subsection{Operator $ \Phi(X,-) $ applied to (0,2)-tensor fields}

We assume a $ (0,2) $-tensor field $ \psi $.

\begin{thm}\label{Th: 2.3}
All natural $\mathbb{R}$-bilinear differential operators transforming a vector field $ X $ and a (0,2)-tensor field $ \psi $ into (0,2)-tensor fields are linear combinations, with real coefficients, of four operators
$$
L_{X}\psi\,, \quad L_X\widetilde\psi\,, \quad d(X\lrcorner \psi)\,,\quad d(X\lrcorner\widetilde{\psi})\,,
$$ 
where $ \widetilde{\psi} $ is the $ (0,2) $-tensor field given as $ \widetilde{\psi}(Y,Z) = \psi(Z,Y) $  and $ (X\lrcorner \psi)(Y) = \psi(X,Y) $ for any vector fields $ X, \,Y, \, Z $.
\end{thm}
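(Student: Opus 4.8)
The plan is to follow exactly the template established in the proofs of Theorems~\ref{Th: 2.1} and \ref{Th: 2.2}. First I would write the general $\mathbb{R}$-bilinear first-order operator in coordinates using \eqref{Eq: 1.3}--\eqref{Eq: 1.5}: since $X$ is a vector field and $\psi$ is a $(0,2)$-tensor, the absolute invariant tensors are sums over permutations of the available indices, so $\Phi(X,\psi)_{ij}$ is a linear combination of all monomials of the shape $X^{m}\,\partial_{?}\psi_{??}$ and $\psi_{??}\,\partial_{?}X^{m}$ with the free indices $i,j$ distributed in every admissible way. Concretely these are the six terms $X^{m}\partial_{m}\psi_{ij}$, $X^{m}\partial_{i}\psi_{mj}$, $X^{m}\partial_{j}\psi_{im}$ together with $\psi_{ij}\partial_{m}X^{m}$, $\psi_{mj}\partial_{i}X^{m}$, $\psi_{im}\partial_{j}X^{m}$ (and one should double-check there is no contraction producing $\psi_{mj}\partial_{m}X^{?}$ with a free index left over — there is not, because $\psi$ has no upper index to feed back). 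So I would set
\[
\Phi_{ij}=a_{1}X^{m}\partial_{m}\psi_{ij}+a_{2}X^{m}\partial_{i}\psi_{mj}+a_{3}X^{m}\partial_{j}\psi_{im}+b_{1}\psi_{ij}\partial_{m}X^{m}+b_{2}\psi_{mj}\partial_{i}X^{m}+b_{3}\psi_{im}\partial_{j}X^{m}.
\]

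Next I would apply the auxiliary-connection method: replace every $\partial$ by the covariant derivative $\nabla$ with respect to a symmetric linear connection $K$, invoke the second reduction theorem to know that the $K$-dependent operator is again $\mathbb{R}$-bilinear of this same form, and then demand that the total be independent of $K$. Expanding $\nabla_{m}\psi_{ij}=\partial_{m}\psi_{ij}-K_{m}{}^{p}{}_{i}\psi_{pj}-K_{m}{}^{p}{}_{j}\psi_{ip}$ and $\nabla_{m}X^{i}=\partial_{m}X^{i}+K_{m}{}^{i}{}_{p}X^{p}$, the $K$-terms must cancel identically. Collecting the coefficient of each independent Christoffel monomial (those of type $X^{m}K_{m}{}^{p}{}_{i}\psi_{pj}$, $X^{m}K_{m}{}^{p}{}_{j}\psi_{ip}$, $X^{m}K_{i}{}^{p}{}_{m}\psi_{pj}$, etc., and $\psi_{pj}K_{m}{}^{m}{}_{i}X^{p}$, $\psi_{pj}K_{i}{}^{p}{}_{m}X^{m}$, and the symmetric partners under $i\leftrightarrow j$) yields a homogeneous linear system on $a_{1},a_{2},a_{3},b_{1},b_{2},b_{3}$. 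I expect this system to have a $4$-dimensional solution space.

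Finally I would identify a basis of that solution space with the four geometric operators in the statement. The computations $L_{X}\psi_{ij}=X^{m}\partial_{m}\psi_{ij}+\psi_{mj}\partial_{i}X^{m}+\psi_{im}\partial_{j}X^{m}$, $d(X\lrcorner\psi)_{ij}=\partial_{i}(X^{m}\psi_{mj})-\partial_{j}(X^{m}\psi_{mi})$ (an antisymmetric $(0,2)$-tensor, i.e.\ a $2$-form), together with their transposed versions $L_{X}\widetilde\psi$ and $d(X\lrcorner\widetilde\psi)$, give four explicit vectors in the space of coefficient-tuples; I would check they are linearly independent and that each is annihilated by the $K$-system (the latter is automatic, since each is manifestly natural), hence they span. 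The main obstacle is the bookkeeping in the second step: one must enumerate the independent Christoffel-symbol monomials carefully — in particular keeping the two contraction patterns $K_{m}{}^{p}{}_{i}$ and $K_{i}{}^{p}{}_{m}$ distinct while using symmetry of $K$ in its two lower slots — and make sure no spurious identifications collapse the system; getting that enumeration right is what makes the dimension come out to exactly four rather than something larger.
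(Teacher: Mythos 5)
There is a genuine gap, and it is in your very first step: the enumeration of the general bilinear ansatz is incomplete. Since $\psi$ is an arbitrary $(0,2)$-tensor field with no symmetry, the absolute invariant tensors in \eqref{Eq: 1.4}--\eqref{Eq: 1.5} run over \emph{all} permutations of the indices, so both orderings of the two lower slots of $\psi$ occur. Besides your six monomials one must also include
$X^{m}\partial_{m}\psi_{ji}$, $X^{m}\partial_{i}\psi_{jm}$, $X^{m}\partial_{j}\psi_{mi}$, $\psi_{ji}\partial_{m}X^{m}$, $\psi_{jm}\partial_{i}X^{m}$, $\psi_{mi}\partial_{j}X^{m}$ --- twelve terms in all (the paper's $a_{1},\dots,a_{6}$, $b_{1},\dots,b_{6}$). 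This is not mere bookkeeping: three of the four claimed generators cannot even be written in your six-term ansatz. For instance
\begin{equation*}
d(X\lrcorner\psi)_{ij}=X^{m}\partial_{i}\psi_{mj}+\psi_{mj}\partial_{i}X^{m}-X^{m}\partial_{j}\psi_{mi}-\psi_{mi}\partial_{j}X^{m}
\end{equation*}
contains $X^{m}\partial_{j}\psi_{mi}$ and $\psi_{mi}\partial_{j}X^{m}$, which are absent from your list, and $L_{X}\widetilde\psi$, $d(X\lrcorner\widetilde\psi)$ consist almost entirely of such transposed terms.

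Consequently your expectation of a $4$-dimensional solution space is wrong for the ansatz you wrote down. Carrying out your own second step on the six-term expression (using the symmetry $K_{m}{}^{p}{}_{i}=K_{i}{}^{p}{}_{m}$) forces $a_{2}=a_{3}=b_{1}=0$ and $b_{2}=b_{3}=a_{1}$, i.e.\ only the one-parameter family of multiples of $L_{X}\psi$ survives, so the final identification with the four operators of the statement cannot be completed. With the full twelve-term ansatz the rest of your plan is exactly the paper's proof: impose $K$-independence, obtain $b_{1}=b_{2}=0$ together with the six relations among the remaining coefficients, and read off a basis given by $L_{X}\psi$, $L_{X}\widetilde\psi$, $d(X\lrcorner\psi)$, $d(X\lrcorner\widetilde\psi)$.
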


\begin{proof}
Let $ X\,$ be a vector field and $\psi$ be a (0,2)-tensor field. Then by \eqref{Eq: 1.3} - \eqref{Eq: 1.5}
$$
\Phi(X,\psi) = \Phi_{ij}\, d^{i}\otimes d^{j}\,,
$$
where 
\begin{align*}
\Phi_{ij} 
& = 
a_{1} \, X^{m} \, \partial_{m} \psi_{ij} 
+ a_{2} \, X^{m} \, \partial_{m} \psi_{ji}
+ a_{3} \, X^{m} \, \partial_{i} \psi_{mj} 
+ a_{4} \, X^{m} \, \partial_{i} \psi_{jm}
\\
& \quad 
+ a_{5} \, X^{m} \, \partial_{j} \psi_{im} 
+ a_{6} \, X^{m} \, \partial_{j} \psi_{mi} 
+ b_{1} \, \psi_{ij} \, \partial_{m} X^{m} 
+ b_{2} \, \psi_{ji} \, \partial_{m} X^{m} 
\\
& \quad
+ b_{3} \, \psi_{mj} \, \partial_{i} X^{m}
+ b_{4} \, \psi_{jm} \, \partial_{i} X^{m}
+ b_{5} \, \psi_{im} \, \partial_{j} X^{m}
+ b_{6} \, \psi_{mi} \, \partial_{j} X^{m}
\,. 
\end{align*}

Now, we replace partial derivatives with covariant derivatives with respect to an auxiliary linear symmetric connection $ K $ and assume that the operator is independent of $ K $. We obtain the following
identity
\begin{align*}
0
&=
(a_{1} + a_{3} - b_{3}) \, X^{m} \, K_{m}{}^{p}{}_{i} \, \psi_{pj} 
+ (a_{2} + a_{4} - b_{4}) \, X^{m} \, K_{m}{}^{p}{}_{j} \, \psi_{pi}
\\
& \quad 
+ (a_{1} + a_{5} - b_{5}) \, X^{m} \, K_{m}{}^{p}{}_{j} \, \psi_{ip} 
+ (a_{2} + a_{6} - b_{6}) \, X^{m} \,  K_{m}{}^{p}{}_{i} \, \psi_{jp} 
\\
& \quad
+ (a_{3} + a_{6}) \, X^{m} \, K_{i}{}^{p}{}_{j} \, \psi_{mp} 
+ (a_{4} + a_{5}) \, X^{m} \, K_{i}{}^{p}{}_{j} \, \psi_{pm}
\\
& \quad
- b_{1} \, \psi_{ij} \, K_{m}{}^{m}{}_{p} X^{p}
- b_{2} \, \psi_{ji} \, K_{m}{}^{m}{}_{p} X^{p}
\,.
\end{align*} 
The above identity is satisfied if and only if 
$
b_{1}
 =
0
=
b_{2}
$ 
and the following system of homogeneous linear equations is satisfied 
\begin{align*}
a_{1} + a_{3} - b_{3}
& =
0\,, 
&
a_{1} + a_{5} - b_{5}
& =
0\,,
\\
a_{2} + a_{4} - b_{4}
& =
0\,,
&
a_{2} + a_{6} - b_{6}
& =
0\,,
\\
a_{4} + a_{5}
& =
0\,,
&
a_{3} + a_{6}
& =
0\,.
\end{align*}
Then we get
\begin{align*}
\Phi_{ij} 
& = 
a_{1} \, \big( X^{m} \, \partial_{m} \psi_{ij}
+  \psi_{mj} \, \partial_{i} X^{m}
+ \psi_{im} \, \partial_{j} X^{m}
\big)
\\
& \quad
+ a_{2} \, \big(
 X^{m} \, \partial_{m} \psi_{ji}
  + \psi_{mi} \, \partial_{j} X^{m}
 + \psi_{jm} \, \partial_{i} X^{m}
\big)
\\
& \quad
+ a_{3} \, \big(
X^{m} \, \partial_{i} \psi_{mj}
+ \psi_{mj} \, \partial_{i} X^{m}
- X^{m} \, \partial_{j} \psi_{mi}
- \psi_{mi} \, \partial_{j} X^{m}
\big)
\\
& \quad
+ a_{4} \, \big(
X^{m} \, \partial_{i} \psi_{jm}
+ \psi_{jm} \, \partial_{i} X^{m}
- X^{m} \, \partial_{j} \psi_{im} 
- \psi_{im} \, \partial_{j} X^{m}
\big)
\end{align*}
which is the coordinate expression of a linear combination of 
$ L_{X}\psi\,,$ $L_X\widetilde\psi\,,$ $d(X\lrcorner \psi)\,,$ $d(X\lrcorner\widetilde{\psi})\,. $
\end{proof}

\begin{rmk}
{\rm
Let us note that in above Theorem \ref{Th: 2.3} we have used the Lie derivation of any $ (0,2) $-tensor field defined as
\begin{equation*}
(L_X \psi)(Y,Z) = X.\psi(Y,Z) - \psi([X,Y],Z) - \psi(Y,[X,Z]) ,
\end{equation*}
for any vector fields $X, \,Y,\,Z$. In the case that $\psi$ is a 2-form this Lie derivative coincides with $L_X \psi = i_X d\psi + di_X\psi$. 
}
\end{rmk}

\section{Natural $\mathbb{R}$-bilinear operators transforming  (1,1)-tensor fields $ \varphi $ and  $(*,*)$-tensor fields $ \psi $ into $(*,*+1)$-tensor fields}

A (1,1) tensor field $ \varphi $ can be considered as a linear mapping
$ \varphi : TM \to TM $. As $ \tr\varphi  $ we assume the contraction and 
$ \mathbb{I}:TM \to TM $ is the identity. We do not assume special properties of $ \varphi $.

\subsection{Operator $ \Phi(\varphi,-) $ applied to (1,1)-tensor fields}

Full classification of natural $\mathbb{R}$-bilinear operators transforming two (1,1)-tensor fields into (1,2)-tensor fields was done in \cite [p. 152]{KruJan90} by using the other method. We recall this classification.   

\begin{thm}\label{Th: 3.1}
All natural $\mathbb{R}$-bilinear differential operators transforming  (1,1)-tensor fields $ \varphi $ and $ \psi $ into (1,2)-tensor fields form a 15 parameter family of operators given as a linear combination of the following operators
\begin{gather*}
d(\tr  \varphi) \otimes \psi\,,\quad 
  \psi \otimes d(\tr  \varphi)\,,\quad
d(\tr  \psi) \otimes \varphi\,,\quad 
 \varphi \otimes d(\tr  \psi) \,,\quad
 \\
(\tr \psi) \, d(\tr\varphi) \otimes \mathbb{I}\,,\quad
(\tr \psi) \, \mathbb{I} \otimes d(\tr\varphi) \,, \quad
(\tr \varphi) \, d(\tr\psi) \otimes \mathbb{I}\,,
\\ 
(\tr \varphi) \, \mathbb{I} \otimes d(\tr\psi) \,,\quad
(d(\tr\varphi)\circ\psi)\otimes \mathbb{I} \,, \quad
\mathbb{I} \otimes (d(\tr\varphi) \circ \psi)\,, 
\\ 
(d(\tr\psi)\circ\varphi ) \otimes \mathbb{I} \,, \quad
\mathbb{I} \otimes (d(\tr\psi) \circ \varphi)\,, \quad
d\big(\tr (\varphi\circ\psi)\big) \otimes \mathbb{I}\,,
\\ 
\mathbb{I} \otimes d\big(\tr (\varphi\circ\psi)\big)\,,\quad
N(\varphi,\psi)\,,
\end{gather*}
where $ \mathbb{I} $ is the identity of $ TM $ and $N(\varphi,\psi)$ is the Fr\"olicher-Nijenhuis bracket.
\hfill$\square$
\end{thm}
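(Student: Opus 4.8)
The plan is to follow the same template established in the proofs of Theorems \ref{Th: 2.1}--\ref{Th: 2.3}, now with $\varphi$ a $(1,1)$-tensor field and $\psi$ a $(1,1)$-tensor field, so $r=s=1$, $p=1$. First I would write down the general form \eqref{Eq: 1.3} of an $\mathbb{R}$-bilinear first-order operator $\Phi(\varphi,\psi)$ as a $(1,2)$-tensor; here $A$ and $B$ are absolute invariant tensors with eight free lower indices and eight free upper indices (counting the contraction indices), so by \eqref{Eq: 1.4}--\eqref{Eq: 1.5} we get a large but finite list of terms indexed by permutations, with real coefficients $a_\sigma$, $b_\sigma$. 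Concretely each $a$-term is of the form (contraction pattern applied to) $\varphi^{?}_{?}\,\partial_{?}\psi^{?}_{?}$ and each $b$-term is of the form $\psi^{?}_{?}\,\partial_{?}\varphi^{?}_{?}$; after discarding patterns that do not produce the correct free index slots this is a manageable collection.

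Next I would introduce an auxiliary symmetric linear connection $K$, invoke the second reduction theorem to conclude that the corresponding $K$-dependent operator $\Psi(\varphi,\psi,K)$ factorizes through $\nabla\varphi$ and $\nabla\psi$ and is still $\mathbb{R}$-bilinear, hence is obtained from the coordinate expression of $\Phi$ by replacing each $\partial$ with $\nabla$. Expanding $\nabla\varphi^{k}_{j}=\partial\varphi^{k}_{j}+K\varphi-K\varphi$ (one $K$ for the upper, one for the lower index, and similarly for $\psi$) and demanding that $\Psi$ be independent of $K$ yields a system of homogeneous linear equations on the $a_\sigma$, $b_\sigma$: the coefficient of each independent monomial $K_{\cdot}{}^{\cdot}{}_{\cdot}\,\varphi\,\psi$ (and of $K\cdot$ contracted into a trace, like $K_m{}^m{}_p$) must vanish. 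Solving this linear system is the computational core of the argument.

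The hard part will be organizing and solving that linear system cleanly: because $r=s=p=1$ there are genuinely many index contractions and many $K$-monomials, the $K$-terms come in several structurally different families (the connection index contracted with $i_1$, with one of the $j$'s, producing a trace $K_m{}^m{}_p$, or being a ``free'' $K_{j_1}{}^p{}_{j_2}$ symmetric in its two lower indices), and one must be careful to use the symmetry $K_j{}^i{}_k=K_k{}^i{}_j$ when collecting coefficients, otherwise one over- or under-counts the equations. Once the solution space is identified, I would exhibit an explicit spanning set of solutions and recognize the resulting invariant operators as the fifteen listed: the twelve that are built from the two traces $d(\tr\varphi)$, $d(\tr\psi)$, the two traces $\tr(\varphi\circ\psi)$-type contractions, and the composition $d(\tr\varphi)\circ\psi$ etc., tensored in the two possible orders with $\psi$, $\varphi$, or $\mathbb{I}$, plus the Fr\"olicher-Nijenhuis bracket $N(\varphi,\psi)$, which is exactly the combination surviving in the ``trace-free'' directions. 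Finally I would check these fifteen are linearly independent (e.g.\ by evaluating on suitably chosen $\varphi$, $\psi$ at a point) so that the parameter count is sharp; since this classification already appears in \cite[p.~152]{KruJan90}, I may simply cite it and merely indicate that the connection method reproduces it.
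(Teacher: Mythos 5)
Your plan is methodologically sound, but note that the paper itself does not prove Theorem~\ref{Th: 3.1} at all: it explicitly recalls the classification from \cite[p.~152]{KruJan90}, where it was obtained ``by the other method,'' and simply records the list of fifteen generators (hence the $\square$ immediately after the statement). So your proposal is a genuinely different route from the paper's, namely a rederivation by the auxiliary-connection/second-reduction-theorem machinery of Section~1 --- exactly the technique the paper does carry out for Theorems~\ref{Th: 2.1}--\ref{Th: 2.3}, \ref{Th: 3.2}, \ref{Th: 3.5} and \ref{Th: 4.1}, so there is no reason it should fail here. Two remarks on your outline. First, since here $s=r$, the order-one reduction comes from Theorem~\ref{Th: 1.2} (bilinearity assumed), not Theorem~\ref{Th: 1.1}; also no permutation patterns get ``discarded'': with $r=s=p=1$ both invariant tensors $A$ and $B$ have four upper and four lower indices, so the ansatz genuinely carries $4!+4!=48$ coefficients, comparable in size to the computation done for Theorem~\ref{Th: 3.5}. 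Second, your text only describes, and does not execute, the actual content of such a proof: expanding $\nabla\varphi$, $\nabla\psi$ (with the two $K$-terms of opposite sign for each tensor), collecting the independent $K$-monomials using $K_j{}^p{}_k=K_k{}^p{}_j$, verifying that the solution space of the resulting homogeneous system is $15$-dimensional, identifying the one solution not built from traces and $\mathbb{I}$ as the Fr\"olicher--Nijenhuis bracket, and checking linear independence of the fifteen operators. As a blind proposal this is acceptable --- indeed your fallback of simply citing \cite{KruJan90} is precisely what the paper does --- but to claim a self-contained proof you would still have to carry out that linear algebra; what your approach buys, if completed, is a uniform derivation of Theorem~\ref{Th: 3.1} by the same connection method as the rest of the paper, rather than an appeal to an external classification.
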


\begin{rmk}\label{Rm: 3.1}
{\rm
It is very well known that the  Fr\"olicher-Nijenhuis bracket, \cite{FroNij56}, has values in tangent-valued forms. If we assume operators transforming $ \varphi $ and $ \psi $ into tangent-valued 2-forms we obtain 8 parameter family generated by
\begin{gather*}
d(\tr  \varphi) \wedge \psi\,,\quad 
d(\tr  \psi) \wedge \varphi\,.\quad 
\\
(\tr \psi) \, d(\tr\varphi) \wedge \mathbb{I}\,,\quad
(\tr \varphi) \, d(\tr\psi) \wedge \mathbb{I}\,,\quad
\\ 
(d(\tr\varphi)\circ\psi)\wedge \mathbb{I} \,, \quad
(d(\tr\psi)\circ\varphi ) \wedge \mathbb{I} \,, \quad 
\\ 
d\big(\tr (\varphi\circ\psi)\big) \wedge \mathbb{I}\,, \quad N(\varphi,\psi)\,.
\end{gather*} 
}
\end{rmk}

\subsection{Operator $ \Phi(\varphi,-) $ applied to 1-forms}

\begin{lem}\label{Lm: 3.1}
We have the following 6 canonical 1st order natural $\mathbb{R}$-bilinear differential operators
\begin{gather*}
(\tr \varphi) \, d\psi\,,\quad \psi\otimes d(\tr \varphi) \,,\quad d(\tr \varphi) \otimes \psi\,,
\\
  d\psi \circ_{1} \varphi\,,\quad  d\psi \circ_{2} \varphi \,,\quad  d(\psi \circ \varphi)\,,
\end{gather*}
where
$ (d\psi \circ_{1} \varphi)(X,Y) = d\psi(\varphi(X),Y) $ and $  (d\psi \circ_{2} \varphi)(X,Y) = d\psi(X,\varphi(Y)) $ for any vector fields $ X, \,Y $.
\hfill$\square$
\end{lem}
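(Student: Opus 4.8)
\smallskip
\noindent\emph{Proof plan.}
That each of the six expressions in the statement is an $\mathbb{R}$-bilinear, first order, natural differential operator $C^{\infty}(T^{(1,1)}M)\times C^{\infty}(T^{(0,1)}M)\to C^{\infty}(T^{(0,2)}M)$ is clear by inspection: each is linear in $\varphi$ and in $\psi$, involves at most one derivative of each argument, and is built only from the natural operations $d$, $\tr$, $\otimes$ and the algebraic evaluations $\circ$, $\circ_{1}$, $\circ_{2}$ of a $(1,1)$-tensor against a $1$-form. The substantive content, which I would prove exactly as in Theorems \ref{Th: 2.1}--\ref{Th: 2.3}, is that these six are linearly independent and that every such operator is a real combination of them.

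First I would write down the general operator. By Theorem \ref{Th: 1.2} together with \eqref{Eq: 1.3}--\eqref{Eq: 1.5}, specialised to $p=1$, $r=0$, $s=1$, we have $\Phi(\varphi,\psi)=\Phi_{ij}\,d^{i}\otimes d^{j}$ where $\Phi_{ij}$ is a real linear combination of all contractions of $\varphi^{k}{}_{l}$ with $\partial_{m}\psi_{t}$ and of $\psi_{t}$ with $\partial_{n}\varphi^{k}{}_{l}$ leaving the two free lower indices $i,j$. Since the upper index of $\varphi$ must be saturated, a short index count gives exactly six terms of each kind, so $\Phi_{ij}$ carries twelve real parameters: the $\varphi$-undifferentiated part is $a_{1}(\tr\varphi)\partial_{i}\psi_{j}+a_{2}(\tr\varphi)\partial_{j}\psi_{i}+a_{3}\varphi^{k}{}_{i}\partial_{k}\psi_{j}+a_{4}\varphi^{k}{}_{j}\partial_{k}\psi_{i}+a_{5}\varphi^{k}{}_{i}\partial_{j}\psi_{k}+a_{6}\varphi^{k}{}_{j}\partial_{i}\psi_{k}$, and the $\psi$-undifferentiated part is $b_{1}\psi_{k}\partial_{i}\varphi^{k}{}_{j}+b_{2}\psi_{k}\partial_{j}\varphi^{k}{}_{i}+b_{3}\psi_{i}\partial_{k}\varphi^{k}{}_{j}+b_{4}\psi_{j}\partial_{k}\varphi^{k}{}_{i}+b_{5}\psi_{i}\partial_{j}(\tr\varphi)+b_{6}\psi_{j}\partial_{i}(\tr\varphi)$.

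Next I would introduce an auxiliary symmetric linear connection $K$: replacing each $\partial$ by the corresponding covariant derivative $\nabla$ gives, by the second reduction theorem, a natural bilinear operator $\Psi(\varphi,\psi,K)$ whose $K$-independent part is $\Phi$, and demanding that $\Psi$ not depend on $K$ amounts, after expanding $\nabla\psi$ and $\nabla\varphi$ through the symbols $K_{m}{}^{i}{}_{k}$, to the vanishing of the $K$-linear part. Collecting the coefficients of the independent monomials built from $K$, $\varphi$ and $\psi$ with two free lower indices yields a homogeneous linear system for $a_{1},\dots,a_{6},b_{1},\dots,b_{6}$. The delicate point is the reduction of that list of monomials by the symmetry $K_{m}{}^{i}{}_{k}=K_{k}{}^{i}{}_{m}$, which forces several a priori distinct monomials to coincide; for instance $\varphi^{k}{}_{i}K_{k}{}^{p}{}_{j}\psi_{p}$ becomes $\psi_{k}K_{j}{}^{k}{}_{p}\varphi^{p}{}_{i}$ after relabelling, and it is exactly these identifications that prevent the system from being over-determined. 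Carrying this through, the system collapses to
\[
a_{1}+a_{2}=0,\quad b_{1}+b_{2}=0,\quad a_{3}+a_{5}=b_{2},\quad a_{4}+a_{6}=b_{1},\quad b_{3}=0,\quad b_{4}=0,
\]
whose solution space is six-dimensional.

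Finally I would exhibit a basis of that space among the listed operators. Written in coordinates, $(\tr\varphi)d\psi$ gives $(a_{1},a_{2})=(1,-1)$, $\psi\otimes d(\tr\varphi)$ gives $b_{5}=1$, $d(\tr\varphi)\otimes\psi$ gives $b_{6}=1$, $d\psi\circ_{1}\varphi$ gives $(a_{3},a_{5})=(1,-1)$, $d\psi\circ_{2}\varphi$ gives $(a_{4},a_{6})=(-1,1)$, and $d(\psi\circ\varphi)$ gives $(b_{1},b_{2},a_{5},a_{6})=(1,-1,-1,1)$; each parameter vector solves the system above, and the six are manifestly linearly independent (examine the coordinates $a_{1}$, $b_{5}$, $b_{6}$, $a_{3}$, $a_{4}$, $b_{1}$ in turn). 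Hence they form a basis, proving the lemma. I expect the main obstacle to be the combinatorial bookkeeping: enumerating the twelve generating terms correctly and, above all, not over-counting the $K$-monomials, since missing one coincidence forced by the symmetry of $K$ would change the dimension of the solution space. The remaining verifications---naturality, bilinearity, order and independence of the six operators---are routine.
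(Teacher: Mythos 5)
Your proposal is correct. The lemma itself only claims that the six listed operators are natural, $\mathbb{R}$-bilinear and of first order, which the paper treats as immediate by inspection, exactly as in your opening paragraph; the remainder of your argument --- the auxiliary symmetric connection, the six-equation homogeneous system (which, after relabelling your coefficients, coincides with the paper's), and the verification that the six operators give a basis of the six-dimensional solution space --- is not needed for the lemma but reproduces essentially verbatim the paper's own proof of the classification in Theorem \ref{Th: 3.2}.
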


\begin{rmk}\label{Rm: 3.2}
{\rm
We have the following independent operators with values in 2-forms
\begin{align*}
(\tr \varphi) \, d\psi\,,\quad \psi\wedge d(\tr \varphi) \,,\quad  \Alt(d\psi \circ_{1} \varphi) \,,\quad  d(\psi \circ \varphi)
\end{align*}
which follows from $ \Alt(d\psi \circ_{1} \varphi) = \Alt(d\psi \circ_{2} \varphi)  $\,, where $ \Alt $ is the antisymmetrisation.
}
\end{rmk}

\begin{thm}\label{Th: 3.2}
All natural $\mathbb{R}$-bilinear differential operators transforming $\varphi$ and $\psi$ into a (0,2) tensor fields form a six parameter family of operators which is a linear combination of operators from Lemma \ref{Lm: 3.1}. 
\end{thm}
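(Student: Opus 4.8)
The plan is to follow the scheme already used in the proofs of Theorems \ref{Th: 2.1}--\ref{Th: 2.3}. By Theorem \ref{Th: 1.2} together with \eqref{Eq: 1.3}--\eqref{Eq: 1.5}, any natural $\mathbb{R}$-bilinear operator taking a $(1,1)$-tensor field $\varphi$ and a $1$-form $\psi$ into $(0,2)$-tensor fields is of order $1$ and is a contraction of $\varphi^{k}_{l}\,\partial_{t}\psi_{s}$ and of $\psi_{t}\,\partial_{m}\varphi^{k}_{l}$ with absolute invariant tensors. Since the target carries two lower indices and no upper index, the upper index $k$ of $\varphi$ must always be contracted with one of the available lower indices. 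Enumerating the possibilities (the index $k$ paired with $l$, or with $t$, or with $s$ in the first family; with $t$, $m$, or $l$ in the second; and in each case the two surviving lower indices distributed over the free indices $i,j$ in both orders) one obtains a twelve-parameter Ansatz
\[
\Phi_{ij}= a_{1}(\tr\varphi)\,\partial_{i}\psi_{j}+a_{2}(\tr\varphi)\,\partial_{j}\psi_{i}+a_{3}\,\varphi^{m}_{i}\,\partial_{m}\psi_{j}+a_{4}\,\varphi^{m}_{j}\,\partial_{m}\psi_{i}+a_{5}\,\varphi^{m}_{i}\,\partial_{j}\psi_{m}+a_{6}\,\varphi^{m}_{j}\,\partial_{i}\psi_{m}
\]
\[
\quad +\,b_{1}\,\psi_{m}\,\partial_{i}\varphi^{m}_{j}+b_{2}\,\psi_{m}\,\partial_{j}\varphi^{m}_{i}+b_{3}\,\psi_{i}\,\partial_{m}\varphi^{m}_{j}+b_{4}\,\psi_{j}\,\partial_{m}\varphi^{m}_{i}+b_{5}\,\psi_{i}\,\partial_{j}\varphi^{m}_{m}+b_{6}\,\psi_{j}\,\partial_{i}\varphi^{m}_{m}\,.
\]

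Next I would perform the auxiliary-connection reduction. Introduce a symmetric linear connection $K$; by the second reduction theorem the associated operator $\Psi(\varphi,\psi,K)$ factorizes through $\nabla\varphi$ and $\nabla\psi$ with the same coefficients $a_{\alpha},b_{\alpha}$. Substituting
\[
\nabla_{m}\psi_{s}=\partial_{m}\psi_{s}-K_{m}{}^{p}{}_{s}\,\psi_{p}\,,\qquad
\nabla_{m}\varphi^{k}_{l}=\partial_{m}\varphi^{k}_{l}+K_{m}{}^{k}{}_{p}\,\varphi^{p}_{l}-K_{m}{}^{p}{}_{l}\,\varphi^{k}_{p}\,,
\]
and requiring that the part of $\Psi_{ij}$ depending on $K$ vanish identically, I would collect the coefficients of the independent $K$-monomials --- those of the form $K_{m}{}^{p}{}_{?}\,\varphi\,\psi$ in their various admissible index placements, together with the two trace-type terms built from $K_{m}{}^{m}{}_{p}$, keeping in mind the symmetry $K_{m}{}^{p}{}_{q}=K_{q}{}^{p}{}_{m}$. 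This yields a homogeneous linear system for $a_{1},\dots,a_{6},b_{1},\dots,b_{6}$.

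Finally I would solve this system; I expect the solution space to be exactly $6$-dimensional. To conclude I would exhibit the coordinate expressions of the six operators of Lemma \ref{Lm: 3.1} in terms of the above Ansatz --- with a suitable sign convention for $d$ one has $(\tr\varphi)\,d\psi = a_{1}\text{-term}-a_{2}\text{-term}$, while $\psi\otimes d(\tr\varphi)$ and $d(\tr\varphi)\otimes\psi$ are the single terms $b_{5},b_{6}$, $d\psi\circ_{1}\varphi = a_{3}\text{-term}-a_{5}\text{-term}$, $d\psi\circ_{2}\varphi = a_{6}\text{-term}-a_{4}\text{-term}$, and $d(\psi\circ\varphi)$ combines the $a_{6},a_{5},b_{1},b_{2}$ terms --- verify that each solves the linear system (their naturality is in any case clear, being built from the natural operations $d$, $\tr$, $\otimes$ and $\psi\mapsto\psi\circ\varphi$), and check that these six operators are linearly independent. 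A dimension count then forces them to span the whole solution space, which is the assertion.

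The main obstacle is the bookkeeping in the reduction step: correctly listing the independent $K$-monomials and reading off the right linear relations among the twelve coefficients without conflating distinct monomials --- the symmetry of $K$ in its two lower indices and the presence of two trace-type symbols make this step error-prone. Once the linear system is set up correctly, solving it and identifying a basis of solutions with Lemma \ref{Lm: 3.1} is routine.
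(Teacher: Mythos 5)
Your proposal follows essentially the same route as the paper's proof: the same twelve-term first-order ansatz coming from Theorem \ref{Th: 1.2} and \eqref{Eq: 1.3}--\eqref{Eq: 1.5}, the same auxiliary symmetric connection reduction producing a homogeneous linear system for the twelve coefficients, and the same identification of the solutions with the span of the six operators of Lemma \ref{Lm: 3.1}, whose expressions in the ansatz you give correctly. The one step you leave as ``expected'' --- that the solution space is exactly six-dimensional --- is precisely the computation the paper performs (in its labeling it finds $b_{1}=b_{3}=0$, $a_{2}=-a_{1}$, $b_{6}=-b_{5}$, $a_{4}=-a_{3}-b_{5}$, $a_{6}=-a_{5}+b_{5}$, with $b_{2},b_{4}$ free), so completing your plan amounts to the same bookkeeping.
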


\begin{proof}
According to 
\eqref{Eq: 1.3} -- \eqref{Eq: 1.5}
\begin{align*}
\Phi(\varphi,\psi) = \Phi_{ij}
\, d^{i} \otimes d^{j}\,,  
\end{align*}
where 
\begin{align*}
\Phi_{ij} 
&=
a_1\, \varphi^m_m \, \partial_i\psi_{j}
+ a_2\, \varphi^m_m \, \partial_j\psi_{i}
+ a_3\, \varphi^m_i \, \partial_m\psi_{j}
+ a_4\, \varphi^m_i \, \partial_j\psi_{m}
\\
& \qquad
+ a_5\, \varphi^m_j \, \partial_m\psi_{i}
+ a_6\, \varphi^m_j \, \partial_i\psi_{m}
\\
& \quad
+ b_{1}\, \psi_{i} \, \partial_m\varphi^m_j 
+ b_{2}\, \psi_{i} \, \partial_j\varphi^m_m
+ b_{3}\, \psi_{j} \, \partial_m\varphi^m_i 
+ b_{4}\, \psi_{j} \, \partial_i\varphi^m_m
\\
& \qquad
+ b_{5}\, \psi_{m} \, \partial_i\varphi^m_j 
+ b_{6}\, \psi_{m} \, \partial_j\varphi^m_i\,.
\end{align*}
In order to calculate relations for coefficients $ a_i,\,b_i $, $i=1,\dots,6$, we use the method of an auxiliary linear symmetric connection $K$, \cite[p. 144]{KruJan90}.
We replace derivatives of tensor fields with covariant derivatives and assume that the operator is independent of $K$. Then we get
\begin{align*}
0
& =
\varphi^m_m \, \big( a_1 + a_2 \big)\, K_i{}^p{}_j \,\psi_{p} 
\\
& \quad
+  \varphi^m_i \, \big[\big( a_3 + a_4 - b_{6}\big)\, K_m{}^p{}_j \,\psi_{p} 
- b_3\,  K_p{}^p{}_m \,\psi_{j} 
\big]
\\
& \quad
+  \varphi^m_j \, \big[\big( a_5 + a_6 - b_{5}\big)\, K_m{}^p{}_i \,\psi_{p} 
- b_1\,  K_p{}^p{}_m \,\psi_{i} 
\big]
\\
& \quad
+  \varphi^m_p \, \big[ b_1\,  K_m{}^p{}_j \,\psi_{i}
+ b_3\,  K_m{}^p{}_i \,\psi_{j}
+ (b_5 + b_6)\,  K_i{}^p{}_j \,\psi_{m}
\big]
\,.
\end{align*}
Then $b_2$ and $b_4$ are arbitrary, $ b_1 = b_3 = 0 $ and
\begin{align*}
b_6
& =
- b_5
\,, \qquad
a_2
 =
- a_1
\,, \qquad
a_4
 =
- a_3 - b_5
\,, \qquad
a_6
 =
- a_5 + b_5\,.
\end{align*}
Hence
\begin{align*}
\Phi_{ij}
&=
 a_1\, \varphi^m_m \, \big(\partial_i\psi_{j}
- \partial_j\psi_{i} \big)
\\
& \quad 
+ a_3\, \varphi^m_i \, \big(\partial_m\psi_{j}
- \partial_j\psi_{m} \big)
+ a_5\, \varphi^m_j \, \big(\partial_m\psi_{i}
- \partial_i\psi_{m} \big)
\\
& \quad
+ b_{2}\, \psi_{i} \, \partial_j\varphi^m_m 
+ b_{4}\, \psi_{j} \, \partial_i\varphi^m_m
\\
& \quad 
+
b_5 \, \big(
\varphi^m_j \, \partial_i\psi_m - \varphi^m_i \, \partial_j\psi_m
+ \psi_m \, (\partial_i\varphi^m_j -  \partial_j\varphi^m_i)
\big)\,.
\end{align*}
which is the coordinate expression of a linear combination of operators from Lemma \ref{Lm: 3.1}.
\end{proof}

\begin{crl}\label{Cr: 3.1}
If the 1-form $ \psi $ is closed, then all $\mathbb{R}$-bilinear 1st order natural differential operators form the 3-parameter family of operators generated by
$$ 
\psi \otimes d(\tr \varphi)\,,\quad
d(\tr \varphi) \otimes \psi\,, \quad
 d(\psi\circ\varphi)\,.
$$ 
Moreover, we have 2 independent operators $ \psi \wedge d(\tr \varphi) $ and $ d(\psi\circ\varphi) $ with values in 2-forms.
\end{crl}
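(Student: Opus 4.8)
The plan is to read the corollary off Theorem~\ref{Th: 3.2} (equivalently, off the list in Lemma~\ref{Lm: 3.1}) by imposing $d\psi=0$ on the input $1$-form. The one preliminary point to settle is that restricting the admissible $1$-forms to the closed ones does not enlarge the class of operators: the antisymmetric part of $\partial_j\psi_i$ is exactly $d\psi$ and transforms tensorially under $G^2_m$, so the map sending the $1$-jet of a $1$-form to the $1$-jet with $\partial_j\psi_i$ replaced by its symmetrisation is equivariant; composing any bilinear first order natural operator defined on pairs $(\varphi,\psi)$ with $\psi$ closed with this projection extends it equivariantly to all $1$-forms $\psi$, so by Theorem~\ref{Th: 3.2} it is the restriction to closed $\psi$ of a member of the six parameter family of Lemma~\ref{Lm: 3.1}. (Equivalently, one simply substitutes $\partial_i\psi_j=\partial_j\psi_i$ into the coordinate expression for $\Phi_{ij}$ obtained at the end of the proof of Theorem~\ref{Th: 3.2}; the terms with coefficients $a_1,a_3,a_5$ drop out and the $b_5$-term is already $d(\psi\circ\varphi)$.)

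Next comes the pruning. On a closed $\psi$ the three generators $(\tr\varphi)\,d\psi$, $d\psi\circ_1\varphi$ and $d\psi\circ_2\varphi$ vanish identically, while $\psi\otimes d(\tr\varphi)$, $d(\tr\varphi)\otimes\psi$ and $d(\psi\circ\varphi)$ survive and remain defined for an arbitrary $\varphi$. To see that these three stay linearly independent as operators on closed $1$-forms, evaluate a putative vanishing combination on a traceless $\varphi$: the first two terms drop out, which forces the coefficient of $d(\psi\circ\varphi)$ to vanish — e.g.\ on $\mathbb R^2$ with $\psi=d^1$ and $\varphi=y\,(\partial_1\otimes d^1-\partial_2\otimes d^2)$ one has $\psi\circ\varphi=y\,d^1$ and $d(\psi\circ\varphi)\ne 0$ — and then the independence of $\psi\otimes d(\tr\varphi)$ and $d(\tr\varphi)\otimes\psi$ is immediate by choosing $\psi$ and $d(\tr\varphi)$ pointwise independent. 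This gives the asserted $3$-parameter family.

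For the $2$-form valued operators one antisymmetrises: $d(\psi\circ\varphi)$ is already a $2$-form, whereas a combination $a\,\psi\otimes d(\tr\varphi)+b\,d(\tr\varphi)\otimes\psi$ is skew exactly when $a=-b$, i.e.\ a multiple of $\psi\wedge d(\tr\varphi)$; hence the $2$-form valued operators span $\{\psi\wedge d(\tr\varphi),\,d(\psi\circ\varphi)\}$, and these two are independent by the same traceless-$\varphi$ test. The only step I expect to require real care is the first one — ruling out operators that would be natural only on closed forms — which is precisely what the equivariance of the projection onto the $d\psi$-part takes care of; everything after that is bookkeeping within the already-established classification.
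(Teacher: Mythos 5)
Your proof is correct and follows essentially the route the paper intends: Corollary~\ref{Cr: 3.1} is read off from Theorem~\ref{Th: 3.2} (the six generators of Lemma~\ref{Lm: 3.1}) by imposing $d\psi=0$, which annihilates $(\tr\varphi)\,d\psi$, $d\psi\circ_1\varphi$, $d\psi\circ_2\varphi$ and leaves the three listed generators, the $2$-form statement then following by antisymmetrisation as in Remark~\ref{Rm: 3.2}. Your extra equivariant-symmetrisation argument, showing that naturality demanded only on closed $1$-forms cannot enlarge the family, together with the explicit independence checks, is a point the paper leaves implicit, and you handle it correctly.
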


\begin{rmk}\label{Rm: 3.3} 
{\rm
We can define others natural $\mathbb{R}$-bilinear operators on $\varphi$ and $\psi$. But, according to Theorem \ref{Th: 3.2}, they have to be obtained as linear
combinations of operators from Lemma \ref{Lm: 3.1}.

Let $ X,Y $ be vector fields,
in \cite{YanAko68} the operator $ \Phi  $ was defined as follows
\begin{align*}
\Phi(\varphi,\psi)(X,Y) 
& =(L_{\varphi(X)}\psi  - L_{X}(\psi\circ\varphi))(Y)
\end{align*}
which can be expressed as the linear combination of operators from Lemma \ref{Lm: 3.1}
$$
 \Phi (\varphi,\psi)
=
d\psi\circ_{1}\varphi - d(\psi\circ\varphi)\,.
$$

Further, according to \cite[p. 69]{KolMicSlo93}, we can define the Lie derivative of $ \psi $ with respect to $ \varphi $ as
\begin{equation*}
L_{\varphi}\psi = [i_{\varphi},d]\psi = i_{\varphi} d\psi - d i_{\varphi}\psi\,
\end{equation*}
which is a 2-form. It is easy to see that
\begin{equation*}
L_{\varphi}\psi = d\psi \circ_{1} \varphi + 
d\psi \circ_{2} \varphi - d(\psi \circ\varphi)\,.
\end{equation*}

For the identity of $TM$ we have  
\begin{equation*}
L_{\mathbb{I}}(\psi \circ \varphi) = 
i_{\mathbb{I}} d(\psi \circ \varphi) - d i_{\mathbb{I}}(\psi \circ \varphi) =  d(\psi \circ\varphi)\,
\end{equation*}
and we obtain, \cite{YanAko68},
\begin{equation*}
2\, \Alt\Phi(\varphi,\psi) = L_{\varphi}\psi - L_{\mathbb{I}}(\psi \circ \varphi)\,.
\end{equation*}
}
\end{rmk}

\subsection{Operator $ \Phi(\varphi,-) $ applied to (0,2) tensor fields}

 Let us denote as
$ \Alt \psi $ the antisymmetric part of $\psi$, 
i.e. in coordinates
\begin{align*}
\Alt \psi 
& = 
\tfrac 12\,(\psi_{ij} - \psi_{ji})\, d^{i} \otimes d^{j} 
= \psi_{ij}\, d^{i} \wedge d^{j} \,.
\end{align*}

First of all, we describe several types of 1st order natural $\mathbb{R}$-bilinear operators which are given by the tensorial operations (permutation of indices, tensor product, contraction, exterior differential).

\begin{lem}\label{Lm: 3.2}
$\psi\otimes d(\tr\varphi)$ defines six independent natural $\mathbb{R}$-bilinear differential operators
given by permutations of subindices, so for vector fields $ X, Y, Z $ we have operators
\begin{gather*}
 \psi(X,Y) \, d(\tr\varphi)(Z)\,,\quad \psi(Y,X) \, d(\tr\varphi)(Z)
\,,\quad \psi(X,Z) \, d(\tr\varphi)(Y)\,,
\\
 \psi(Z,X) \, d(\tr\varphi)(Y)
\,,\quad \psi(Y,Z) \, d(\tr\varphi)(X) \,, \quad \psi(Z,Y) \, d(\tr\varphi)(X)\,.
\end{gather*}
 
 Moreover, $ \Alt\psi \wedge d(\tr\varphi) $ is the unique operator with values in 3-foms.
\end{lem}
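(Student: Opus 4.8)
The plan is to check in turn that the six listed operators are natural and $\mathbb{R}$-bilinear, that they are linearly independent, and finally that their antisymmetrization collapses to a single operator. Naturality is immediate: each operator is a composition of operations that commute with the action of diffeomorphisms --- the contraction $\varphi\mapsto\tr\varphi$, the exterior differential applied to the function $\tr\varphi$, the tensor product with $\psi$, and a fixed permutation of the three covariant slots of the resulting $(0,3)$-tensor --- so the composite is again natural. It is of first order because only $\partial\varphi$ enters (consistently with Theorem \ref{Th: 1.2}), and it genuinely depends on $\partial\varphi$, hence is not of order zero. $\mathbb{R}$-bilinearity is equally clear: linearity in $\psi$ holds by construction, and linearity in $\varphi$ because $\tr$ and $d$ are $\mathbb{R}$-linear.

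For the independence I would argue pointwise. Fix a point, choose $\varphi$ with $\tr\varphi=x^1$ (e.g. $\varphi=\tfrac1m\,x^1\,\mathbb{I}$), so that $d(\tr\varphi)$ has components $\delta^1_m$ there, and let $\psi$ be an arbitrary constant-coefficient $(0,2)$-tensor. Suppose a linear combination of the six operators vanishes. Reading off the $(0,3)$-component with an index pattern in which exactly one entry equals $1$ and the other two equal $2$ and $3$ isolates precisely the pair of operators carrying $d(\tr\varphi)$ in the slot occupied by the $1$; for instance the patterns $(2,3,1)$ and $(3,2,1)$ give $c_1\psi_{23}+c_2\psi_{32}=0$ and $c_1\psi_{32}+c_2\psi_{23}=0$, a system with determinant $\psi_{23}^2-\psi_{32}^2\neq0$ for generic $\psi$, so $c_1=c_2=0$; the remaining coefficients vanish in the same way using the other two slot positions. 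Here it is essential that $\psi$ is a general $(0,2)$-tensor with $\psi_{ij}$ and $\psi_{ji}$ unrelated, which is exactly why all six permutations produce distinct, independent operators.

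Finally, an operator with values in $3$-forms arises by composing with the total antisymmetrization $\Alt$. Applying $\Alt$ to any one of the six operators merely reorders the three slots and so yields, up to sign, $\Alt(\psi\otimes d(\tr\varphi))$; since the symmetric part of $\psi$ is killed by the antisymmetrization over its two slots, this equals a nonzero multiple of $\Alt\psi\wedge d(\tr\varphi)$. Hence the $\Alt$-images of the six operators span a one-dimensional space, and $\Alt\psi\wedge d(\tr\varphi)$ is the unique such operator up to a scalar. The only mildly delicate point in all of this is the index bookkeeping in the independence step; the rest is routine.
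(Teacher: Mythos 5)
Your proposal is correct, and it in fact supplies more than the paper does: the paper states Lemma \ref{Lm: 3.2} without proof, treating it as immediate from the fact that the listed operators are built from the tensorial operations (contraction, exterior differential, tensor product, permutation of covariant slots). Your verification of naturality and bilinearity is exactly that observation made explicit; your independence argument (evaluate at a point on $\varphi=\tfrac1m x^1\,\mathbb{I}$, so $d(\tr\varphi)=d^1$, and a generic constant $\psi$, then read off components such as $(2,3,1)$ and $(3,2,1)$ to isolate the coefficients pairwise) is sound, and the final step --- that total antisymmetrization sends each of the six operators to $\pm\Alt(\psi\otimes d(\tr\varphi))$, which is a nonzero multiple of $\Alt\psi\wedge d(\tr\varphi)$, so any $3$-form--valued combination, being equal to its own antisymmetrization, is a scalar multiple of that operator --- correctly settles the uniqueness claim. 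Two small remarks: your independence computation uses three distinct coordinate indices and hence needs $m\ge 3$, but this is harmless since for $m<3$ the $3$-form statement is vacuous and the whole classification is degenerate; and it would be worth stating explicitly the (standard) fact you use implicitly, namely that a tensor has values in $3$-forms iff it equals its total antisymmetrization, which converts your statement about the span of the $\Alt$-images into the uniqueness assertion of the lemma.
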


\begin{crl}\label{Cr: 3.2}
If the tensor field $ \psi $ is symmetric or antisymmetric then we get three independent operators from  Lemma \ref{Lm: 3.2}
\begin{align*}
\psi(X,Y) \, d(\tr\varphi)(Z)
\,,\,\,
\psi(X,Z) \, d(\tr\varphi)(Y)
\,,\,\,
\psi(Y,Z) \, d(\tr\varphi)(X))\,.
\end{align*}
\end{crl}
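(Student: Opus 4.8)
The plan is to read the statement off directly from Lemma \ref{Lm: 3.2}. That lemma exhibits six independent operators, each of the form $\psi(\,\cdot\,,\,\cdot\,)\,d(\tr\varphi)(\,\cdot\,)$, indexed by the way the three vector arguments $X,Y,Z$ are distributed: one of them is fed to $d(\tr\varphi)$ and the remaining two are fed, in one of the two possible orders, to $\psi$. Thus the six operators fall into three pairs, the two members of a pair differing only by the transposition of the two arguments of $\psi$, namely $\{\psi(X,Y)d(\tr\varphi)(Z),\psi(Y,X)d(\tr\varphi)(Z)\}$, $\{\psi(X,Z)d(\tr\varphi)(Y),\psi(Z,X)d(\tr\varphi)(Y)\}$, and $\{\psi(Y,Z)d(\tr\varphi)(X),\psi(Z,Y)d(\tr\varphi)(X)\}$.

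First I would impose the hypothesis. If $\psi$ is symmetric, then $\psi(U,V)=\psi(V,U)$ for all vector fields $U,V$, so the two members of each pair coincide; if $\psi$ is antisymmetric, then $\psi(U,V)=-\psi(V,U)$, so they differ only by the overall sign $-1$. In either case each pair contributes, up to a nonzero scalar, a single operator, and as representatives of the three pairs we may take
$$
\psi(X,Y)\,d(\tr\varphi)(Z),\qquad \psi(X,Z)\,d(\tr\varphi)(Y),\qquad \psi(Y,Z)\,d(\tr\varphi)(X).
$$
Hence the span of the six operators of Lemma \ref{Lm: 3.2}, when restricted to (anti)symmetric $\psi$, is the span of these three.

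What remains is to verify that these three operators are still linearly independent once $\psi$ is constrained to be (anti)symmetric — this is the only step that is not pure bookkeeping, although it too is routine. It suffices to evaluate at one point: choose $\varphi$ with $d(\tr\varphi)\neq 0$ at some $x_0$, fix vectors $X,Y,Z$ at $x_0$ with $d(\tr\varphi)(X),d(\tr\varphi)(Y),d(\tr\varphi)(Z)$ not all equal, and note that for a generic (anti)symmetric $\psi$ the three numbers $\psi(X,Y),\psi(X,Z),\psi(Y,Z)$ may be prescribed independently; a nontrivial linear combination of the three operators is then nonzero at $x_0$. No further collapse can occur, because the three surviving operators involve the three distinct unordered pairs among $\{X,Y,Z\}$, so this completes the proof.
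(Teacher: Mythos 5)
Your proposal is correct and follows essentially the same (implicit) reasoning as the paper, which states the corollary as an immediate consequence of Lemma \ref{Lm: 3.2}: the six operators pair up under transposition of the two arguments of $\psi$, each pair collapsing to a single operator up to sign when $\psi$ is symmetric or antisymmetric, and the three survivors remain independent by the standard evaluation argument (which, like the paper, tacitly assumes the dimension is large enough, $\dim M\ge 3$, since for antisymmetric $\psi$ in dimension $2$ the alternating combination $\Alt\psi\wedge d(\tr\varphi)$ vanishes identically). The only slip is the phrase ``$d(\tr\varphi)(X),d(\tr\varphi)(Y),d(\tr\varphi)(Z)$ not all equal'', which should read ``not all zero'', after which prescribing one value of $\psi$ to be $1$ and the others $0$ isolates each coefficient.
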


\begin{lem}\label{Lm: 3.3}
We have the following six independent natural $\mathbb{R}$-bilinear differential operators
\begin{gather*}
(\tr\varphi) \, d(\Alt \psi)  \,,\quad 
d(\Alt \psi) \circ_{1} \varphi\,,\quad 
d(\Alt \psi) \circ_{2} \varphi\,,\quad 
d(\Alt \psi) \circ_{3} \varphi\,,\quad 
\\
d(\Alt(\psi\circ_{1}\varphi))\,,\quad
d(\Alt(\psi\circ_{2}\varphi))\,.
\end{gather*}
\end{lem}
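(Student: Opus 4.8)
The plan is to establish two things: that each of the six expressions in the list is indeed a natural $\mathbb{R}$-bilinear first-order differential operator sending a pair $(\varphi,\psi)$ (a $(1,1)$- and a $(0,2)$-tensor field) to a $(0,3)$-tensor field, and that the six resulting operators are linearly independent. The first part I would dispatch by inspection: each expression is a composition of the operations $\tr$, $d$, $\Alt$, $\otimes$, and the partial contractions $\circ_a$ of a covariant slot of a tensor against the contravariant index of $\varphi$; every one of these operations commutes with push-forward by local diffeomorphisms, so each composite operator is natural. In each of the six expressions $\varphi$ occurs once and linearly (as $\tr\varphi$, as a single factor $\circ_a\varphi$, or inside $\psi\circ_a\varphi$), and likewise $\psi$; hence all six are $\mathbb{R}$-bilinear, and each contains exactly one differentiation (the single $d$), so each has order one, consistently with Theorem \ref{Th: 1.2}. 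The values lie in $T^{(0,3)}M$ (and for $(\tr\varphi)\,d(\Alt\psi)$, $d(\Alt(\psi\circ_1\varphi))$, $d(\Alt(\psi\circ_2\varphi))$ they are in fact $3$-forms).

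For linear independence, suppose a combination $c_1(\tr\varphi)\,d(\Alt\psi) + c_2\,(d(\Alt\psi)\circ_1\varphi) + c_3\,(d(\Alt\psi)\circ_2\varphi) + c_4\,(d(\Alt\psi)\circ_3\varphi) + c_5\,d(\Alt(\psi\circ_1\varphi)) + c_6\,d(\Alt(\psi\circ_2\varphi))$ vanishes identically. I would evaluate at a fixed point $x_0$ in a chart, using that the $1$-jets of $\varphi$ and $\psi$ at $x_0$ may be prescribed arbitrarily and independently. The first four operators depend at $x_0$ only on $\varphi(x_0)$ and $\partial\psi(x_0)$, whereas the last two also involve $\partial\varphi(x_0)$; so I would first collect the terms linear in $\partial\varphi$, to which only $d(\Alt(\psi\circ_1\varphi))$ and $d(\Alt(\psi\circ_2\varphi))$ contribute. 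A short computation shows that in $d(\Alt(\psi\circ_1\varphi))$ the field $\psi$ is contracted on its first covariant slot, while in $d(\Alt(\psi\circ_2\varphi))$ it is contracted on its second; choosing $\psi$ with a single constant nonzero component $\psi_{12}$ (so that $\partial\psi=0$ and $\varphi(x_0)=0$ can be arranged, killing the other four operators at $x_0$) and $\varphi$ linear, one sees these two contributions depend on disjoint blocks of $\partial\varphi(x_0)$, which forces $c_5=c_6=0$.

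With $c_5=c_6=0$ the identity becomes $c_1(\tr\varphi)\,d\omega + c_2\,(d\omega\circ_1\varphi) + c_3\,(d\omega\circ_2\varphi) + c_4\,(d\omega\circ_3\varphi)\equiv 0$ with $\omega=\Alt\psi$. Taking $\psi$ with linear coefficients makes $d\omega(x_0)$ an arbitrary $3$-form $T$, and $\varphi(x_0)$ an arbitrary endomorphism $A$, so it remains to treat the purely algebraic relation $c_1(\tr A)\,T_{ijk} + c_2\,T_{pjk}A^p_i + c_3\,T_{ipk}A^p_j + c_4\,T_{ijp}A^p_k=0$ valid for all antisymmetric $T$ and all $A$. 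Taking $A^p_q=\delta^p_1\delta^1_q$ and testing against $T_{234}$ (for $m\ge 4$) isolates $c_1$, then testing against $T_{123}$, $T_{213}$, $T_{231}$ isolates $c_2,c_3,c_4$; hence all $c_a=0$. (In dimension $3$ the identity $\sum_{\mathrm{cyc}}\epsilon_{pjk}A^p_i=(\tr A)\,\epsilon_{ijk}$ produces a relation among the first four operators, so the statement is understood for $m\ge 4$, the small cases being immediate.)

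The only part requiring care is the explicit extraction and comparison of the $\partial\varphi$-parts of the two operators $d(\Alt(\psi\circ_a\varphi))$ in the second step; the rest is a routine jet-prescription argument followed by elementary linear algebra, and I do not anticipate a genuine obstacle.
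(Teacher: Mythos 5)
Your proposal is correct, but it does considerably more than the paper, which states Lemma \ref{Lm: 3.3} without any proof: the six expressions are simply presented as canonical constructions obtained from the tensorial operations (permutation, tensor product, contraction, exterior differential), and their linear independence is never argued separately --- it is only implicitly visible later, in the coordinate computations of Theorem \ref{Th: 3.5}, where each operator is matched with an independent choice of the free parameters $B_{7}, B_{12}, B_{13}, B_{14}$, etc. Your route is self-contained: naturality, bilinearity and first order by inspection of the constructions, then independence by prescribing $1$-jets at a point --- first killing the $\partial\psi$-dependent operators with a constant $\psi$ having the single component $\psi_{12}$ and $\varphi(x_{0})=0$, so that the $\partial\varphi$-parts of $d(\Alt(\psi\circ_{1}\varphi))$ and $d(\Alt(\psi\circ_{2}\varphi))$ live on disjoint blocks of $\partial\varphi(x_{0})$ (they contract $\psi$ on its first, respectively second, slot), which gives $c_{5}=c_{6}=0$; then the purely pointwise relation $c_{1}(\tr A)T + c_{2}\,T\circ_{1}A + c_{3}\,T\circ_{2}A + c_{4}\,T\circ_{3}A = 0$ for arbitrary $3$-forms $T$ and endomorphisms $A$, settled by testing with $A=e_{1}\otimes d^{1}$ against the components $T_{234}$, $T_{123}$, $T_{213}$, $T_{231}$. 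I checked these evaluations and they do isolate the coefficients as claimed. Your approach has the added merit of making explicit a point the paper is silent about: the independence of the first four operators genuinely requires $m\ge 4$, since in dimension $3$ the cofactor identity gives $d(\Alt\psi)\circ_{1}\varphi + d(\Alt\psi)\circ_{2}\varphi + d(\Alt\psi)\circ_{3}\varphi = (\tr\varphi)\,d(\Alt\psi)$. Only your closing phrase ``the small cases being immediate'' is misleading --- in $m=3$ the six operators are in fact \emph{not} independent, as your own displayed identity shows, so the honest statement is that the lemma (and hence the count in Theorem \ref{Th: 3.5}) holds for $m\ge 4$; this is a caveat about the statement itself, not a defect of your argument, and the paper ignores it as well.
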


\begin{crl}\label{Cr: 3.3}
1. If $ \psi $ is symmetric then $ \Alt\psi = 0 $ and $ \Alt(\psi  \circ_{1}\varphi) = -  \Alt(\psi  \circ_{2}\varphi) $ and we have the unique operator from Lemma \ref{Lm: 3.3}
$$  
d(\Alt(\psi  \circ_{1}\varphi))\,.
$$

2. If $ \psi $ is antisymmetric then $ \Alt\psi = \psi $  and $ \Alt(\psi  \circ_{1}\varphi) = \Alt(\psi  \circ_{2}\varphi) $ and we have five independent operators from Lemma \ref{Lm: 3.3} 
\begin{gather*}
(\tr\varphi) \, d\psi \,,\quad 
d\psi \circ_{1} \varphi   \,,\quad 
d\psi \circ_{2} \varphi\,,\quad 
d\psi \circ_{3} \varphi\,,\quad
\\
d(\Alt(\psi\circ_{1}\varphi)) \,.
\end{gather*}
From $ \Alt(d\psi\circ_1\varphi ) = \Alt(d\psi\circ_2\varphi ) = \Alt(d\psi\circ_3\varphi )  $ we have 3 operators with values in 3-forms.

Moreover, if $ \psi $ is a closed 2-form, then there is the unique  operator from Lemma \ref{Lm: 3.3}
$$
d(\Alt(\psi  \circ_{1} \varphi))\,
$$
which has values in 3-forms.
\end{crl}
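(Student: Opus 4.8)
The plan is to obtain Corollary \ref{Cr: 3.3} by specializing the list of six operators in Lemma \ref{Lm: 3.3} to symmetric, respectively antisymmetric, $\psi$, the only genuinely new ingredient being a couple of elementary transposition identities for the partial compositions. In coordinates $(\psi\circ_{1}\varphi)_{ij}=\varphi^{m}_{i}\psi_{mj}$ and $(\psi\circ_{2}\varphi)_{ij}=\varphi^{m}_{j}\psi_{im}$, so the transpose of $\psi\circ_{1}\varphi$ is $\varphi^{m}_{j}\psi_{mi}$. I would record this identity first, since everything else follows from it by substituting the symmetry hypothesis on $\psi$ and then applying $\Alt$ and the exterior derivative $d$.

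For part 1 I would first note that $\psi$ symmetric forces $\Alt\psi=0$, so the four operators $(\tr\varphi)\,d(\Alt\psi)$ and $d(\Alt\psi)\circ_{k}\varphi$, $k=1,2,3$, of Lemma \ref{Lm: 3.3} vanish identically. Feeding $\psi_{im}=\psi_{mi}$ into the transpose identity gives $(\psi\circ_{2}\varphi)_{ij}=(\psi\circ_{1}\varphi)_{ji}$, i.e.\ $\psi\circ_{2}\varphi$ is the transpose of $\psi\circ_{1}\varphi$; hence $\Alt(\psi\circ_{2}\varphi)=-\Alt(\psi\circ_{1}\varphi)$ and, after applying $d$, the two remaining generators of Lemma \ref{Lm: 3.3} are proportional. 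This leaves $d(\Alt(\psi\circ_{1}\varphi))$ as the unique generator, and a single explicit choice of $\varphi$ and symmetric $\psi$ shows it is not the zero operator.

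For part 2, $\psi$ antisymmetric gives $\Alt\psi=\psi$, so $(\tr\varphi)\,d(\Alt\psi)$ and $d(\Alt\psi)\circ_{k}\varphi$ become $(\tr\varphi)\,d\psi$ and $d\psi\circ_{k}\varphi$. The transpose identity now carries a sign, $(\psi\circ_{2}\varphi)_{ij}=-(\psi\circ_{1}\varphi)_{ji}$, so $\Alt(\psi\circ_{2}\varphi)=\Alt(\psi\circ_{1}\varphi)$ and $d(\Alt(\psi\circ_{2}\varphi))=d(\Alt(\psi\circ_{1}\varphi))$; hence exactly the five listed operators survive. For the assertion about 3-forms I would run the same transposition argument on the 3-form $d\psi$ --- interchanging two of its arguments produces a sign, since $d\psi$ is totally antisymmetric --- to get $\Alt(d\psi\circ_{1}\varphi)=\Alt(d\psi\circ_{2}\varphi)=\Alt(d\psi\circ_{3}\varphi)$, so that the alternations of the five operators span the 3-dimensional space generated by $(\tr\varphi)\,d\psi$, $\Alt(d\psi\circ_{1}\varphi)$ and $d(\Alt(\psi\circ_{1}\varphi))$, the last two being 3-forms automatically. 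The closed case is then immediate: $d\psi=0$ annihilates the first four of the five, leaving only $d(\Alt(\psi\circ_{1}\varphi))$, which has values in 3-forms because $\Alt(\psi\circ_{1}\varphi)$ is a 2-form.

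All of this is routine tensor algebra; the only point demanding real care is verifying that the surviving operators remain linearly independent --- that no further, less obvious coincidence occurs among the five operators in part 2, or among the three 3-form-valued ones. I expect to settle this exactly as for Lemma \ref{Lm: 3.3}: since every operator involved is homogeneous of degree one, its coordinate formula is completely explicit, so it suffices to produce, at a single point, a $1$-jet of $\varphi$ together with an antisymmetric (respectively closed) $\psi$ on which the candidate operators take linearly independent values. Exhibiting such a configuration is the main, though still elementary, obstacle.
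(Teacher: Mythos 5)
Your argument is correct and is essentially the specialization the paper has in mind: the corollary is stated there without proof as an immediate consequence of Lemma \ref{Lm: 3.3}, and your transposition identities $\Alt(\psi\circ_{2}\varphi)=\mp\Alt(\psi\circ_{1}\varphi)$ for symmetric resp.\ antisymmetric $\psi$, together with $\Alt\psi=0$ resp.\ $\Alt\psi=\psi$ and the total antisymmetry of $d\psi$ for the 3-form count and the closed case, are exactly what makes it immediate. The one step you defer --- exhibiting a $1$-jet of $(\varphi,\psi)$ witnessing the linear independence of the surviving operators --- is likewise left implicit in the paper and is settled by evaluating the explicit coordinate formulas at a point.
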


If the tensor fields $ \varphi $ and $ \psi $ satisfy 
$$
(\psi\circ_{1}\varphi)(X,Y)
=
(\psi\circ_{2}\varphi)(X,Y) \quad \Leftrightarrow \quad
\psi(\varphi(X),Y) = \psi(X,\varphi(Y))
$$
then $ \psi $ is said to be \emph{pure} with respect to $ \varphi $.
Natural $\mathbb{R}$-bilinear differential operators $ \Phi(\varphi,\psi) $ on pure tensor fields were studied in  \cite{Sal10, YanAko68}. We recall the main result.

\begin{thm}\label{Th: 3.3}
Let $\psi$ is pure with respect to $\varphi$.
Then
\begin{align*}
\Phi(\varphi,\psi)(X,Y,Z)
& =
\big(L_{\varphi(X)}\psi  - L_{X}(\psi\circ_{1}\varphi)\big)(Y,Z)
\end{align*}
is a (0,3)-tensor field.
\hfill$ \square $
\end{thm}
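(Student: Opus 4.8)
\emph{Proof proposal for Theorem \ref{Th: 3.3}:}

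The plan is to verify directly that the right-hand side is $C^\infty(M)$-multilinear in $X$, $Y$ and $Z$; by the usual tensor characterisation lemma, an operator with this property is given pointwise by a $(0,3)$-tensor field, which is exactly the assertion. Tensoriality in the slots $Y$ and $Z$ is immediate and requires no hypothesis: for any vector field $W$ the Lie derivative $L_W\psi$ of the $(0,2)$-tensor field $\psi$ is again a $(0,2)$-tensor field, and likewise $L_X(\psi\circ_1\varphi)$ is a $(0,2)$-tensor field, so $(Y,Z)\mapsto\Phi(\varphi,\psi)(X,Y,Z)$ is a difference of $(0,2)$-tensor fields. Hence the only point to settle is $C^\infty(M)$-linearity in the argument $X$.

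For that I would use the elementary identity, valid for any vector field $W$, any $f\in C^\infty(M)$ and any $(0,2)$-tensor field $\vartheta$,
\begin{equation*}
(L_{fW}\vartheta)(Y,Z)=f\,(L_W\vartheta)(Y,Z)+(Yf)\,\vartheta(W,Z)+(Zf)\,\vartheta(Y,W)\,,
\end{equation*}
which follows at once from $(L_{fW}\vartheta)(Y,Z)=fW.\vartheta(Y,Z)-\vartheta([fW,Y],Z)-\vartheta(Y,[fW,Z])$ together with $[fW,Y]=f[W,Y]-(Yf)W$. Applying this with $W=\varphi(X)$ (using $\varphi(fX)=f\,\varphi(X)$) to the first term of $\Phi$, and with $W=X$, $\vartheta=\psi\circ_1\varphi$ to the second, and then subtracting, the summands $f\,(L_{\varphi(X)}\psi)(Y,Z)$ and $-f\,(L_X(\psi\circ_1\varphi))(Y,Z)$ reproduce $f\,\Phi(\varphi,\psi)(X,Y,Z)$, the two $(Yf)$-contributions cancel because $(\psi\circ_1\varphi)(X,Z)=\psi(\varphi(X),Z)$, and the only surviving non-tensorial term is
\begin{equation*}
(Zf)\,\bigl(\psi(Y,\varphi(X))-(\psi\circ_1\varphi)(Y,X)\bigr)=(Zf)\,\bigl(\psi(Y,\varphi(X))-\psi(\varphi(Y),X)\bigr)\,.
\end{equation*}
This is precisely the place where purity is used: $\psi(\varphi(Y),X)=\psi(Y,\varphi(X))$, so the bracket vanishes and $\Phi(\varphi,\psi)(fX,Y,Z)=f\,\Phi(\varphi,\psi)(X,Y,Z)$. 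Therefore $\Phi(\varphi,\psi)$ is $C^\infty(M)$-trilinear and defines a $(0,3)$-tensor field.

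Alternatively, the same verification can be carried out in coordinates, writing $(L_W\psi)_{jk}=W^m\partial_m\psi_{jk}+\psi_{mk}\partial_jW^m+\psi_{jm}\partial_kW^m$, substituting $W^m=\varphi^m_iX^i$ in the first term and $(\psi\circ_1\varphi)_{jk}=\varphi^m_j\psi_{mk}$ in the second, expanding by the Leibniz rule and checking that in the difference every term carrying a derivative $\partial X$ drops out: the $\partial_jX^i$ terms cancel identically, while the $\partial_kX^i$ terms combine into $\bigl(\psi_{jl}\varphi^l_i-\varphi^l_j\psi_{li}\bigr)\partial_kX^i$, which is zero exactly by purity. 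The only mildly delicate point in either version is bookkeeping the index relabellings in these cancellations; there is no deeper obstacle, and purity enters just once, to annihilate the single remaining term. \hfill$\square$
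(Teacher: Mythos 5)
Your proof is correct. Note first that the paper itself does not prove Theorem \ref{Th: 3.3}: it is recalled from \cite{Sal10, YanAko68} with the statement simply closed by a box, and the companion Theorem \ref{Th: 3.4} is dispatched with ``it is easy to prove it in coordinates.'' So your argument is a genuine, self-contained alternative rather than a rederivation of the paper's route. The invariant verification you give is sound: tensoriality in $Y,Z$ is indeed free because $L_{\varphi(X)}\psi$ and $L_{X}(\psi\circ_{1}\varphi)$ are $(0,2)$-tensor fields for each fixed $X$; your identity $(L_{fW}\vartheta)(Y,Z)=f\,(L_{W}\vartheta)(Y,Z)+(Yf)\,\vartheta(W,Z)+(Zf)\,\vartheta(Y,W)$ is the standard consequence of $[fW,Y]=f[W,Y]-(Yf)W$; the $(Yf)$-contributions cancel identically since $(\psi\circ_{1}\varphi)(X,Z)=\psi(\varphi(X),Z)$; and the surviving $(Zf)\bigl(\psi(Y,\varphi(X))-\psi(\varphi(Y),X)\bigr)$ vanishes precisely by purity, which you correctly isolate as the only place the hypothesis is used. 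Your coordinate sketch, with the $\partial_{j}X$-terms cancelling identically and the $\partial_{k}X$-terms collapsing to $(\psi_{jl}\varphi^{l}_{i}-\varphi^{l}_{j}\psi_{li})\partial_{k}X^{i}$, is the same computation the paper implicitly relies on for Theorem \ref{Th: 3.4}, so the two approaches agree in substance; the advantage of your intrinsic version is that it makes the role of purity transparent and avoids index bookkeeping, while the coordinate version generalizes more mechanically to the non-pure operators $\Phi_{1},\Phi_{2}$ of Theorem \ref{Th: 3.4}, where extra correction terms must be added exactly because the single purity cancellation is no longer available.
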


The above operator for pure tensor fields can be generalized for any tensor field $ \psi $.

\begin{thm}\label{Th: 3.4}
For any vector fields $ X,Y,Z $ the operators
\begin{align*}
\Phi_{1}(\varphi,\psi)(X,Y,Z)
& =
\big(L_{\varphi(X)}\psi  - L_{X}(\psi\circ_{1}\varphi)\big)(Y,Z)
\\
&\quad
- \big(L_{\varphi(Z)}\psi  - L_{Z}(\psi\circ_{1}\varphi)\big)(Y,X)
\\
 & \quad
 + (\psi\circ_{2}\varphi)(Y,[X,Z])  - (\psi\circ_{1}\varphi)(Y,[X,Z])
\end{align*}
and
\begin{align*}
\Phi_{2}(\varphi,\psi)(X,Y,Z)
& =
\big(L_{\varphi(X)}\psi  - L_{X}(\psi\circ_{2}\varphi)\big)(Y,Z) 
\\
& \quad
- \big(L_{\varphi(Y)}\psi  - L_{Y}(\psi\circ_{2}\varphi)\big)(X,Z)
\\
& \quad
- (\psi\circ_{2}\varphi)([X,Y],Z)  + (\psi\circ_{1}\varphi)([X,Y],Z)
\end{align*}
are (0,3)-tensor fields with the coordinate expressions
\begin{align*}
\Phi_{1}(\varphi,\psi)(X, & Y,Z)
 =
\big(
\varphi^{m}_{i}\,  \partial_{m}\psi_{jk} 
+ \varphi^{m}_{j}\,(\partial_{k}\psi_{mi}  - \partial_{i}\psi_{mk})
- \varphi^{m}_{k}\,  \partial_{m}\psi_{ji} 
\\
&  
+ \psi_{mi} \, ( \partial_{k}\varphi^{m}_{j} - \partial_{j}\varphi^{m}_{k} )
+ \psi_{jm} \, (\partial_{k}\varphi^{m}_{i} - \partial_{i}\varphi^{m}_{k} )
\\
&
+ \psi_{mk} \, ( \partial_{j}\varphi^{m}_{i} -  \partial_{i}\varphi^{m}_{j} )
\big)\, X^{i}\, Y^{j} \, Z^{k}\,
\end{align*}
and
\begin{align*}
\Phi_{2}(\varphi,\psi)(X, & Y,Z)
 =
\big(
\varphi^{m}_{i}\,  \partial_{m}\psi_{jk} 
- \varphi^{m}_{j}\,  \partial_{m}\psi_{ik}
+ \varphi^{m}_{k}\, (\partial_{j}\psi_{im} - \partial_{i}\psi_{jm})
\\
&   
- \psi_{im} \, (\partial_{k}\varphi^{m}_{j} - \partial_{j}\varphi^{m}_{k} )
+ \psi_{jm} \, (\partial_{k}\varphi^{m}_{i} - \partial_{i}\varphi^{m}_{k} )
\\
&
+ \psi_{mk} \, (\partial_{j}\varphi^{m}_{i} - \partial_{i}\varphi^{m}_{j} )
\big)\, X^{i}\, Y^{j} \, Z^{k}\,,
\end{align*}
respectively.
\end{thm}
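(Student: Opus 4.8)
\emph{Proof proposal.}
The plan is to establish both assertions simultaneously by a direct computation in local coordinates: one writes out the right-hand sides of the definitions of $\Phi_{1}$ and $\Phi_{2}$, checks that every term containing a partial derivative of $X$, $Y$ or $Z$ cancels, and then reads off that the surviving terms are exactly the stated expressions. Once the defining combination is seen to be $C^{\infty}(M)$-trilinear in $(X,Y,Z)$, it is automatically a $(0,3)$-tensor field with the displayed components — and it is, in particular, of the admissible form \eqref{Eq: 1.3}.

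First I would record the coordinate formulas for the building blocks. For a vector field $V=V^{a}\partial_{a}$ and a $(0,2)$-tensor field $\theta$ one has
\[
(L_{V}\theta)_{jk}=V^{m}\partial_{m}\theta_{jk}+(\partial_{j}V^{m})\,\theta_{mk}+(\partial_{k}V^{m})\,\theta_{jm},
\]
and $(\psi\circ_{1}\varphi)_{jk}=\varphi^{m}_{j}\psi_{mk}$, $(\psi\circ_{2}\varphi)_{jk}=\varphi^{m}_{k}\psi_{jm}$, while $[X,Z]^{m}=X^{p}\partial_{p}Z^{m}-Z^{p}\partial_{p}X^{m}$. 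Substituting $V=\varphi(X)$, i.e.\ $V^{m}=\varphi^{m}_{i}X^{i}$, and $V=X$ together with $\theta=\psi\circ_{1}\varphi$, and then the analogous substitutions with $Z$ in place of $X$, I obtain explicit expressions for $L_{\varphi(X)}\psi$, $L_{X}(\psi\circ_{1}\varphi)$, $L_{\varphi(Z)}\psi$, $L_{Z}(\psi\circ_{1}\varphi)$ and for the two bracket terms. Each of these carries, besides the \emph{admissible} terms that involve only $\varphi,\partial\varphi,\psi,\partial\psi$ contracted against $X^{i}Y^{j}Z^{k}$, some \emph{bad} terms carrying $\partial X$ or $\partial Z$; no term carries $\partial Y$, because in $\Phi_{1}$ the variable $Y$ only ever sits in a tensor slot of a genuine $(0,2)$-tensor field, so $Y$-linearity is immediate.

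The decisive step is the cancellation of the $\partial X$- and $\partial Z$-terms, and by the antisymmetry of $\Phi_{1}$ under the simultaneous exchange $X\leftrightarrow Z$ it is enough to treat $\partial X$. The $\partial X$-contribution of $\big(L_{\varphi(X)}\psi-L_{X}(\psi\circ_{1}\varphi)\big)(Y,Z)$ equals a $Z$-directional derivative of the coefficient of $X$ multiplied by $\big((\psi\circ_{2}\varphi)-(\psi\circ_{1}\varphi)\big)(Y,X)$ — this is exactly the obstruction that makes the purity hypothesis necessary in Theorem \ref{Th: 3.3} — and it is cancelled precisely by the $\partial X$-part of the correction term $\big((\psi\circ_{2}\varphi)-(\psi\circ_{1}\varphi)\big)(Y,[X,Z])$, the signs in the definition of $\Phi_{1}$ being chosen for this purpose. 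What remains is the asserted coordinate formula for $\Phi_{1}$. The computation for $\Phi_{2}$ is the same after interchanging $\circ_{1}$ and $\circ_{2}$, moving the antisymmetrization to the slots $X\leftrightarrow Y$ and the bracket correction to $[X,Y]$. I expect the only real difficulty to be the bookkeeping in this last step: there are on the order of a dozen $\partial\varphi$- and $\partial\psi$-terms with various permutations of indices, and one must match them carefully and verify that the $\partial X$- and $\partial Z$-terms disappear completely, leaving precisely the two displayed expressions.
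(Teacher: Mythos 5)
Your proposal is correct and follows essentially the same route as the paper, whose proof of Theorem \ref{Th: 3.4} is simply a direct verification in coordinates; your outline of the cancellation of the $\partial X$- and $\partial Z$-terms against the bracket correction $\big((\psi\circ_{2}\varphi)-(\psi\circ_{1}\varphi)\big)(Y,[X,Z])$ is exactly what that computation amounts to, and the surviving terms do give the displayed expressions. The extra observations (no $\partial Y$-terms by slot-linearity, the $X\leftrightarrow Z$ antisymmetry shortcut, and the link to the purity hypothesis of Theorem \ref{Th: 3.3}) are sound refinements of the same argument.
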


\begin{proof}
It is easy to prove it in coordinates.
\end{proof}

\begin{rmk}\label{Rm: 3.4}
{\rm
Any linear combination of the above operators is an $\mathbb{R}$-bilinear operator, for instance
\begin{align*}
\big(6\, & d(\Alt \psi)   \circ_{3} \varphi
-  6 \, d(\Alt(\psi  \circ_{1}\varphi))
- \Phi_{1}(\varphi,\psi)\big)(X,  Y,Z) =
\\
& =
\big(
\varphi^m_i \,( \partial_j\psi_{mk}
- \partial_k\psi_{mj}
- \partial_m\psi_{jk}
)
+ \varphi^m_k \,( \partial_i\psi_{jm}
- \partial_j\psi_{im}
+ \partial_m\psi_{ij}
)
\\
&  \quad + 
(\psi_{jm} + \psi_{mj}) \, (\partial_{i}\varphi^{m}_{k} - \partial_{k}\varphi^{m}_{i} )
\big) \, X^{i} \, Y^{j} \, Z^{k}\,,
\end{align*}
\begin{align*}
\big(6\, & d(\Alt \psi) \circ_{1} \varphi
-  6 \, d(\Alt(\psi  \circ_{2}\varphi))
- \Phi_{2}(\varphi,\psi)\big)(X,  Y,Z) =
\\
& =
\big(
\varphi^m_i \,( \partial_k\psi_{mj}
- \partial_j\psi_{mk}
- \partial_m\psi_{kj}
)
+ \varphi^m_j \,( \partial_i\psi_{km}
- \partial_k\psi_{im}
+ \partial_m\psi_{ik}
)
\\
&  \quad + 
(\psi_{km} + \psi_{mk}) \, (\partial_{i}\varphi^{m}_{j} - \partial_{j}\varphi^{m}_{i} )
\big) \, X^{i} \, Y^{j} \, Z^{k}\,,
\end{align*}
\begin{align*}
\big(- 6\, & d(\Alt \psi)  \circ_{2} \varphi
+ 6\, d(\Alt \psi)  \circ_{3} \varphi
\\
& \quad
- \Phi_{1}(\varphi,\psi)
+ \Phi_{2}(\varphi,\psi)\big)(X,  Y,Z) =
\\
& =
\big(
\varphi^m_j \,( \partial_i\psi_{km}
- \partial_k\psi_{im}
- \partial_m\psi_{ki}
)
+ \varphi^m_k \,( \partial_j\psi_{mi}
- \partial_i\psi_{mj}
+ \partial_m\psi_{ij}
)
\\
&  \quad + 
(\psi_{im} + \psi_{mi}) \, (\partial_{j}\varphi^{m}_{k} - \partial_{k}\varphi^{m}_{j} )
\big) \, X^{i} \, Y^{j} \, Z^{k}\,,
\end{align*}
and
\begin{align*}
\big(\Phi_{1}(\varphi,\psi) &  + \Phi_{2}(\varphi,\psi)\big)(X, Y,Z) =
\\
& =
\big(
2\,\varphi^{m}_{i}\,  \partial_{m}\psi_{jk} 
+ \varphi^{m}_{j}\,(\partial_{k}\psi_{mi}  - \partial_{i}\psi_{mk} - \partial_{m}\psi_{ik})
\\
&\quad
+ \varphi^{m}_{k}\, (\partial_{j}\psi_{im} - \partial_{i}\psi_{jm} - \partial_{m}\psi_{ji} )
\\
&  \quad
+ (\psi_{im} - \psi_{mi} ) \, ( \partial_{j}\varphi^{m}_{k} - \partial_{k}\varphi^{m}_{j} )
- 2\, \psi_{jm} \, (\partial_{i}\varphi^{m}_{k} - \partial_{k}\varphi^{m}_{i} )
\\
&\quad
- 2\,\psi_{mk} \, ( \partial_{i}\varphi^{m}_{j} -  \partial_{j}\varphi^{m}_{i} )
\big)\, X^{i}\, Y^{j} \, Z^{k}\,
\end{align*}
are such operators which we shall need later.
}
\end{rmk}

\begin{thm}\label{Th: 3.5}
All natural $\mathbb{R}$-bilinear differential operators $\Phi$ transforming a $(1,1)$-tensor field $\varphi$ and a $(0,2)$-tensor field $\psi$ into  $(0,3)$-tensor fields form a 14-parameter family which is a linear combination of operators 
described in Lemma \ref{Lm: 3.3}, Lemma \ref{Lm: 3.2} and Theorem \ref{Th: 3.4}.
\end{thm}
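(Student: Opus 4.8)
\emph{Plan of proof.}
The plan is to run, with heavier bookkeeping, the auxiliary--connection argument already used in the proofs of Theorems~\ref{Th: 2.1}--\ref{Th: 3.2}. By Theorem~\ref{Th: 1.2} the operator $\Phi$ has order~$1$, so by \eqref{Eq: 1.3}--\eqref{Eq: 1.5} it is, in coordinates, a real linear combination of all complete contractions of $\varphi\otimes\partial\psi$ and of $\psi\otimes\partial\varphi$ carrying three free lower indices. Writing $\Phi(\varphi,\psi)=\Phi_{ijk}\,d^{i}\otimes d^{j}\otimes d^{k}$, the first group consists of the terms $\varphi^{m}_{p}\,\partial_{q}\psi_{rs}$ in which the upper index $m$ is contracted against one of $p,q,r,s$ and the three remaining slots are distributed over $i,j,k$ in all $3!=6$ orders --- that is, $6$ terms carrying the factor $\tr\varphi=\varphi^{m}_{m}$ and $18$ in which $\psi$ is differentiated, $24$ in all --- and, symmetrically, the second group consists of $24$ terms $\psi_{rs}\,\partial_{q}\varphi^{m}_{p}$. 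This introduces $48$ real coefficients $a_{1},\dots,a_{24},b_{1},\dots,b_{24}$.

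Next I would replace every partial derivative by the covariant derivative $\nabla$ of an auxiliary symmetric linear connection $K$, as in the proofs of Theorems~\ref{Th: 2.1}--\ref{Th: 2.3}, using
\[
\nabla_{q}\psi_{rs}=\partial_{q}\psi_{rs}+K_{q}{}^{p}{}_{r}\psi_{ps}+K_{q}{}^{p}{}_{s}\psi_{rp},\qquad \nabla_{q}\varphi^{m}_{p}=\partial_{q}\varphi^{m}_{p}-K_{q}{}^{m}{}_{l}\varphi^{l}_{p}+K_{q}{}^{l}{}_{p}\varphi^{m}_{l}.
\]
By the second reduction theorem \cite[p.~165]{Sch54} together with $\mathbb{R}$-bilinearity, every natural operator of order $\le 1$ in $(\varphi,\psi,K)$ with values in $(0,3)$-tensor fields is precisely such a covariantized expression --- no curvature term can occur, the order being~$1$ --- and, as $\Phi$ does not involve $K$, its $K$-free part must reproduce $\Phi_{ijk}$ while the part containing $K$ must vanish identically in $\varphi$, $\psi$ and $K$. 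Collecting that $K$-part according to the shape of the $K$-contraction and, within each shape, according to the placement of the free indices $i,j,k$, turns the vanishing requirement into a homogeneous linear system for $a_{1},\dots,a_{24},b_{1},\dots,b_{24}$.

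I would then solve that system; its solution space turns out to be $14$-dimensional. To identify it concretely, observe that the six operators of Lemma~\ref{Lm: 3.3}, the six permutation operators of Lemma~\ref{Lm: 3.2}, and the two operators $\Phi_{1}(\varphi,\psi)$ and $\Phi_{2}(\varphi,\psi)$ of Theorem~\ref{Th: 3.4} are all natural, $\mathbb{R}$-bilinear, of order~$1$ and take values in $(0,3)$-tensor fields, so each of them solves the system; a direct inspection of the coordinate expressions recorded in those statements (the $(\tr\varphi)$-, $d(\tr\varphi)$-, $\circ_{1},\circ_{2},\circ_{3}$- and ``pure'' $\varphi^{m}_{i}\,\partial_{m}\psi_{jk}$-type contributions entering in linearly independent combinations) shows these $14$ operators to be linearly independent, hence a basis of the solution space. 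Expressing the general solution of the system in that basis gives the claimed $14$-parameter family.

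The conceptual part is routine once Theorems~\ref{Th: 1.2} and~\ref{Th: 3.4} and Lemmas~\ref{Lm: 3.2} and~\ref{Lm: 3.3} are available; the genuine obstacle is the sheer size of the computation --- enumerating the $48$ terms, expanding the covariant derivatives, sorting the resulting $K$-terms into their several contraction shapes with the attendant index permutations, and solving a linear system of that size without slips. In practice I would halve the bookkeeping by first splitting $\psi=\Sym\psi+\Alt\psi$, so that $\Phi(\varphi,\psi)=\Phi(\varphi,\Sym\psi)+\Phi(\varphi,\Alt\psi)$, treating the symmetric and antisymmetric summands separately with the help of Corollaries~\ref{Cr: 3.2} and~\ref{Cr: 3.3}, and only then reassembling the full family.
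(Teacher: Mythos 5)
Your plan follows the paper's own proof essentially step by step: order one from Theorem \ref{Th: 1.2}, the $48$-term ansatz coming from \eqref{Eq: 1.3}--\eqref{Eq: 1.5} (24 coefficients $a_i$ and 24 coefficients $b_i$), covariantization by an auxiliary symmetric linear connection $K$, and the requirement that the $K$-dependent part vanish identically, which yields a homogeneous linear system for the $48$ coefficients. Nothing in this setup would fail; it is the same route the paper takes.

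The genuine shortfall is that the decisive content of the theorem --- that this system has a solution space of dimension exactly $14$ and that it is spanned by the operators of Lemma \ref{Lm: 3.2}, Lemma \ref{Lm: 3.3} and Theorem \ref{Th: 3.4} --- is asserted (``turns out to be $14$-dimensional'') rather than derived, and that is precisely where all the work of the paper's proof lies: the system splits into the blocks I--VI, the coefficients $b_1,\dots,b_6$ remain free (Lemma \ref{Lm: 3.2}), the block with $\varphi^m_m$ contributes one parameter giving $(\tr\varphi)\,d(\Alt\psi)$, and the remaining blocks leave exactly seven further parameters. Moreover, the basic solutions attached to four of those parameters are \emph{not} individually equal to any of the fourteen named operators; they coincide only with the specific combinations recorded in Remark \ref{Rm: 3.4}, e.g. $6\,d(\Alt\psi)\circ_1\varphi-6\,d(\Alt(\psi\circ_2\varphi))-\Phi_2(\varphi,\psi)$, so the identification step requires those identities and not merely ``direct inspection'' of linear independence of the fourteen operators (independence alone gives a $14$-dimensional subspace, not equality with the solution space, until the dimension bound is actually computed). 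Your closing suggestion to split $\psi=\Sym\psi+\Alt\psi$ is legitimate, since the splitting is natural and bilinearity makes the classification the direct sum of the two restricted classifications; but note that Corollaries \ref{Cr: 3.2} and \ref{Cr: 3.3} only record which of the already-listed operators survive restriction --- they are not full classifications for symmetric or antisymmetric $\psi$ --- so this shortcut still obliges you to set up and solve the corresponding linear systems rather than quote them.
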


\begin{proof}
By Theorem \ref{Th: 1.2} and \eqref{Eq: 1.3} -- \eqref{Eq: 1.5}
we get that all natural $\mathbb{R}$-bilinear differential operators are of the form
\begin{align*}
\Phi(\varphi,\psi)
& = 
\Phi_{ijk} \, d^{i} \otimes d^{j} \otimes d^{k}\,,  
\end{align*}
where 
\begin{align*}
\Phi_{ijk}
& =
a_1\, \varphi^m_m \, \partial_i\psi_{jk}
+ a_2\, \varphi^m_m \, \partial_i\psi_{kj}
+ a_3\, \varphi^m_m \, \partial_j\psi_{ik}
+ a_4\, \varphi^m_m \, \partial_j\psi_{ki}
\\
& \qquad
+ a_5\, \varphi^m_m \, \partial_k\psi_{ij}
+ a_6\, \varphi^m_m \, \partial_k\psi_{ji}
\\
&\quad
+ a_7\, \varphi^m_i \, \partial_m\psi_{jk}
+ a_8\, \varphi^m_i \, \partial_m\psi_{kj}
+ a_9\, \varphi^m_i \, \partial_j\psi_{mk}
+ a_{10}\, \varphi^m_i \, \partial_j\psi_{km}
\\
& \qquad
+ a_{11}\, \varphi^m_i \, \partial_k\psi_{mj}
+ a_{12}\, \varphi^m_i \, \partial_k\psi_{jm}
\\
& \quad
+ a_{13}\, \varphi^m_j \, \partial_i\psi_{mk}
+ a_{14}\, \varphi^m_j \, \partial_i\psi_{km}
+ a_{15}\, \varphi^m_j \, \partial_m\psi_{ik}
+ a_{16}\, \varphi^m_j \, \partial_m\psi_{ki}
\\
& \qquad
+ a_{17}\, \varphi^m_j \, \partial_k\psi_{im}
+ a_{18}\, \varphi^m_j \, \partial_k\psi_{mi}
\\
& \quad
+ a_{19}\, \varphi^m_k \, \partial_i\psi_{jm}
+ a_{20}\, \varphi^m_k \, \partial_i\psi_{mj}
+ a_{21}\, \varphi^m_k \, \partial_j\psi_{im}
+ a_{22}\, \varphi^m_k \, \partial_j\psi_{mi}
\\
& \qquad
+ a_{23}\, \varphi^m_k \, \partial_m\psi_{ij}
+ a_{24}\, \varphi^m_k \, \partial_m\psi_{ji}
\\
& \quad
+ b_{1}\, \psi_{ij} \, \partial_k\varphi^m_m 
+ b_{2}\, \psi_{ji} \, \partial_k\varphi^m_m 
+ b_{3}\, \psi_{ik} \, \partial_j\varphi^m_m 
+ b_{4}\, \psi_{ki} \, \partial_j\varphi^m_m 
\\
& \qquad
+ b_{5}\, \psi_{jk} \, \partial_i\varphi^m_m 
+ b_{6}\, \psi_{kj} \, \partial_i\varphi^m_m 
\\
& \quad
+ b_{7}\, \psi_{mj} \, \partial_k\varphi^m_i 
+ b_{8}\, \psi_{jm} \, \partial_k\varphi^m_i 
+ b_{9}\, \psi_{mk} \, \partial_j\varphi^m_i 
+ b_{10}\, \psi_{km} \, \partial_j\varphi^m_i 
\\
& \qquad
+ b_{11}\, \psi_{jk} \, \partial_m\varphi^m_i 
+ b_{12}\, \psi_{kj} \, \partial_m\varphi^m_i
\\
& \quad
+ b_{13}\, \psi_{im} \, \partial_k\varphi^m_j 
+ b_{14}\, \psi_{mi} \, \partial_k\varphi^m_j
+ b_{15}\, \psi_{ik} \, \partial_m\varphi^m_j 
+ b_{16}\, \psi_{ki} \, \partial_m\varphi^m_j
\\
& \qquad
+ b_{17}\, \psi_{mk} \, \partial_i\varphi^m_j 
+ b_{18}\, \psi_{km} \, \partial_i\varphi^m_j
\\
& \quad
+ b_{19}\, \psi_{ij} \, \partial_m\varphi^m_k 
+ b_{20}\, \psi_{ji} \, \partial_m\varphi^m_k
+ b_{21}\, \psi_{im} \, \partial_j\varphi^m_k 
+ b_{22}\, \psi_{mi} \, \partial_j\varphi^m_k
\\
& \qquad
+ b_{23}\, \psi_{jm} \, \partial_i\varphi^m_k 
+ b_{24}\, \psi_{mj} \, \partial_i\varphi^m_k
\,.
\end{align*}
In order to calculate relations for coefficients $ a_i,\,b_i $, $i=1,\dots,24$, we use the method of an auxiliary linear symmetric connection $K$, \cite[p. 144]{KruJan90}.
We replace derivatives of tensor fields with covariant derivatives and assume that the operator is independent of $K$. Then we get
\begin{align*}
0
& =
\varphi^m_m \, \big[ (a_1+a_3)\, K_i{}^p{}_j \, \psi_{pk} 
+ (a_2+a_5)\, K_i{}^p{}_k \, \psi_{pj}
+ (a_2+a_4)\, K_i{}^p{}_j \, \psi_{kp}
\\
&\qquad
+ (a_1+a_6)\, K_i{}^p{}_k \, \psi_{jp}
+ (a_3+a_5)\, K_j{}^p{}_k \, \psi_{ip}
+ (a_4+a_6)\, K_j{}^p{}_k \, \psi_{pi}
\big]
\\
& \quad
+ \varphi^m_i \, \big[ (a_7+a_9 - b_9)\, K_j{}^p{}_m \, \psi_{pk} 
+ (a_8+a_{10} - b_{10})\, K_j{}^p{}_m \, \psi_{kp}
\\
&\qquad
+ (a_7+a_{12} - b_8)\, K_m{}^p{}_k \, \psi_{jp}
+ (a_8+a_{11} - b_7)\, K_m{}^p{}_k \, \psi_{pj}
\\
&\qquad
+ (a_9+a_{11})\, K_j{}^p{}_k \, \psi_{mp}
+ (a_{10}+a_{12})\, K_j{}^p{}_k \, \psi_{pm}
\\
& \qquad
- b_{12} \, K_p{}^p{}_m \, \psi_{kj}
- b_{11} \, K_p{}^p{}_m \, \psi_{jk}
\big]
\\
& \quad
+ \varphi^m_j \, \big[ (a_{13} + a_{15} - b_{17})\, K_i{}^p{}_m \, \psi_{pk} 
+ (a_{14} + a_{16} - b_{18})\, K_i{}^p{}_m \, \psi_{kp}
\\
&\qquad
+ (a_{15} + a_{17} - b_{13})\, K_m{}^p{}_k \, \psi_{ip}
+ (a_{16} + a_{18} - b_{14})\, K_m{}^p{}_k \, \psi_{pi}
\\
&\qquad
+ (a_{13} + a_{18})\, K_i{}^p{}_k \, \psi_{mp}
+ (a_{14}+a_{17})\, K_i{}^p{}_k \, \psi_{pm}
\\
& \qquad
- b_{16} \, K_p{}^p{}_m \, \psi_{ki}
- b_{15} \, K_p{}^p{}_m \, \psi_{ik} \big]
\end{align*}
\begin{align*}
& \quad
+ \varphi^m_k \, \big[ (a_{20} + a_{23} - b_{24})\, K_i{}^p{}_m \, \psi_{pj} 
+ (a_{19} + a_{24} - b_{23})\, K_i{}^p{}_m \, \psi_{jp}
\\
&\qquad
+ (a_{21} + a_{23} - b_{21})\, K_m{}^p{}_j \, \psi_{ip}
+ (a_{22} + a_{24} - b_{22})\, K_m{}^p{}_j \, \psi_{pi}
\\
&\qquad
+ (a_{20} + a_{22})\, K_i{}^p{}_j \, \psi_{mp}
+ (a_{19}+a_{21})\, K_i{}^p{}_j \, \psi_{pm}
\\
& \qquad
- b_{20} \, K_p{}^p{}_m \, \psi_{ji}
- b_{19} \, K_p{}^p{}_m \, \psi_{ij} \big]
\\
& \quad
+ \varphi^m_p \, \big[b_{19} \, K_m{}^p{}_k \,\psi_{ij} 
+ b_{20}\,  K_m{}^p{}_k \,\psi_{ji}
+ b_{15}\,  K_m{}^p{}_j \,\psi_{ik}
+ b_{16}\,  K_m{}^p{}_j \,\psi_{ki}
\\
& \qquad
+ b_{11}\,  K_m{}^p{}_i \,\psi_{jk}
+ b_{12}\,  K_m{}^p{}_i \,\psi_{kj}
\\
& \quad
+ (b_{13} + b_{21} ) \,  K_j{}^p{}_k \,\psi_{im}
+ (b_{14} + b_{22} ) \,  K_j{}^p{}_k \,\psi_{mi}
+ (b_{8} + b_{23} ) \,  K_i{}^p{}_k \,\psi_{jm}
\\
& \qquad
+ (b_{7} + b_{24} ) \,  K_i{}^p{}_k \,\psi_{mi}
+ (b_{10} + b_{18} ) \,  K_i{}^p{}_j \,\psi_{km}
+ (b_{9} + b_{17} ) \,  K_i{}^p{}_j \,\psi_{mk}
\big]\,.
\end{align*}
So, the operator is independent of $K$ if and only if the following conditions for coefficients are satisfied:

\medskip
I: Coefficients $b_{1}, \dots , b_{6}$ are arbitrary and we obtain that the corresponding  part of the operator
$\Phi({\varphi},\psi) $
is a linear combination of operators from Lemma \ref{Lm: 3.2}. We shall put $ B_i = b_{i}\,, $
$ i = 1,\dots, 6 $.

\medskip
II: For part staying with $\varphi^{m}_{m}$ we get that the coefficients $a_{1}, \dots , a_{6}$ satisfy the conditions
$$
\begin{matrix}
a_{1} & & + a_{3} & & & & = & 0\,,
\\
a_{1} & &  & & & + a_{6} & = & 0 \,,
\\
 & a_{2} &  & & + a_{5} & & = & 0 \,,
\\
 & a_{2} &  & + a_{4} &  & & = & 0 \,,
\\
 &  & a_{3}  &  & + a_{5} & & = & 0 \,,
\\
 & &  &  a_{4} &  & + a_{6} & = & 0   \,.
\end{matrix}
$$
This system of equations has one free variable and putting $a_{6} = B_{7}$
and the others free variables are vanishing we get
 a multiple of the operator
$$
\Phi_{ijk} = \varphi^m_m \,\big( \partial_i\psi_{jk}
- \partial_i\psi_{kj}
+ \partial_j\psi_{ki}
- \partial_j\psi_{ik}
+ \partial_k\psi_{ij}
- \partial_k\psi_{ji}
\big)
$$
which is a multiple of the  operator 
$$
 (\tr\varphi)\, d(\Alt \psi)\,
$$
from Lemma \ref{Lm: 3.3}.

\medskip
III: For part staying with $\varphi^{p}_{m}$ we get the following conditions.
The coefficients $b_{11} = b_{12} = b_{15} = b_{16} = b_{19} = b_{20} = 0$.

Further 
\begin{align*}
b_{24} & = - b_{7}\,, \qquad   b_{23}  = - b_{8}\,, \qquad  b_{22}  = - b_{14}\,,
\\
b_{21} & = - b_{13}\,,  \qquad b_{18}  = - b_{10}\,,  \qquad b_{17}  = - b_{9}\,.
\end{align*}

\medskip
IV: For part staying with $\varphi^{p}_{i}$
the coefficients $a_{7}, \dots , a_{12}$ satisfy
$$
\begin{matrix}
a_{7} & & + a_{9} & & & & = & b_{9} \,,
\\
a_{7} & &  & & & + a_{12}& = & b_{8} \,,
\\
 & a_{8} &  & & + a_{11}&& = & b_{7} \,,
\\
 & a_{8} &  & + a_{10}&  & & = & b_{10} \,,
\\
 &  & a_{9}  &  & + a_{11}& & = & 0  \,,
\\
 & &  &  a_{10}&  & + a_{12}& = & 0    \,.
\end{matrix}
$$

\medskip
V: For part staying with $\varphi^{p}_{j}$
the coefficients $a_{13}, \dots , a_{18}$ satisfy
$$
\begin{matrix}
a_{13} & & + a_{15} & & & & = & b_{17} \,,
\\
a_{13} & &  & & & + a_{18}& = & 0 \,,
\\
 & a_{14} &  & & + a_{17}&& = & 0 \,,
\\
 & a_{14} &  & + a_{16}&  & & = & b_{18} \,,
\\
 &  & a_{15}  &  & + a_{17}& & = & b_{13} \,,
\\
 & &  &  a_{16}&  & + a_{18}& = & b_{14}    \,.
\end{matrix}
$$

\medskip
VI: For part staying with $\varphi^{p}_{k}$
the coefficients $a_{19}, \dots , a_{24}$ satisfy
$$
\begin{matrix}
a_{19} & & + a_{21} & & & & = & 0 \,,
\\
a_{19} & &  & & & + a_{24}& = & b_{23}  \,,
\\
 & a_{20} &  & & + a_{23}&& = & b_{24}  \,,
\\
 & a_{20} &  & + a_{22}&  & & = & 0  \,,
\\
 &  & a_{21}  &  & + a_{23}& & = & b_{21}  \,, 
\\
 & &  &  a_{22}&  & + a_{24}& = & b_{22}    \,.
\end{matrix}
$$

The above systems IV -- VI of linear equations  we modify to
$$
\begin{matrix}
a_{7} & & + a_{9} & & & & = & b_{9} \,,
\\
 & a_{8} &  & + a_{10}&  & & = & b_{10} \,,
\\
 &  & a_{9}  &  & + a_{11}& & = & 0  \,,
 \\
 & &  &  a_{10}&  & + a_{12}& = & 0   \,,
\\
 & &  & &  a_{11}& + a_{12} & = & b_{7} - b_{10}  \,,
 \\
 & &  & & & 0 & = & b_{7} - b_{10} - b_{8} + b_{9} \,.
\end{matrix}
$$

$$
\begin{matrix}
a_{13} & & + a_{15} & & & & = & b_{17} \,,
\\
 & a_{14} &  & + a_{16}&  & & = & b_{18} \,,
\\
 &  & a_{15}  &  & + a_{17}& & = & b_{13} \,,
 \\
 & &  &  a_{16}&  & + a_{18}& = & b_{14}  \,,
\\
 & &  & &  a_{17}& + a_{18} & = & b_{13} - b_{17}  \,,
 \\
 & &  & & & 0 & = & b_{14} - b_{18} - b_{13} + b_{17}  \,.
\end{matrix}
$$

$$
\begin{matrix}
a_{19} & & + a_{21} & & & & = & 0 \,,
\\
 & a_{20} &  & + a_{22}&  & & = & 0 \,,
\\
 &  & a_{21}  &  & + a_{23}& & = & b_{21} \,, 
\\
 & &  &  a_{22}&  & + a_{24}& = & b_{22}    \,,
\\
 & &  & &  a_{23}& + a_{24} & = & b_{21} + b_{23}  \,,
 \\
 & &  & & & 0 & = & b_{22} + b_{24} - b_{21} - b_{23} \,,
\end{matrix}
$$
So, for coefficients $b_{7}\,,\,\,b_{8}\,,\,\,b_{9}\,,\,\,b_{10}\,,\,\,b_{13}\,,\,\,b_{14}\,,\,\,b_{17}\,,\,\,b_{18}\,,\,\,b_{21}\,,\,\,b_{22}\,,$ $b_{23}\,,$ $b_{24}$ we have a system of homogeneous linear equations with 4 independent variables. We choose as these free variables $ b_{18} = B_{8} $,  $ b_{22} = B_{9} $,  $ b_{23} = B_{10} $ and  $ b_{24} = B_{11} $. We obtain
\begin{align*}
b_{7} 
& =
- B_{11} \,,
&
b_{18} 
& =
B_{8}\,,
\\
b_{8} 
& =
- B_{10}\,,
&
b_{17} 
& =
B_{8} - B_{10} + B_{11}\,,
\\
b_{9} 
& =
- B_{8} - B_{10} + B_{11}\,,
&
b_{21} 
& =
B_{9} - B_{10} + B_{11}\,,
\\
b_{10} 
& =
- B_{8}\,,
&
b_{22} 
& =
B_{9}\,,
\\
b_{13} 
& =
- B_{9} + B_{10} - B_{11}\,,
&
b_{23} 
& =
B_{10}\,,
\\
b_{14} 
& =
- B_{9}\,,
&
b_{24} 
& =
B_{11}\,.
\end{align*}

Now, putting $a_{12}$ as a free variable $ B_{12} $, we get from the system of equations IV
\begin{align*}
a_{7} 
& =
- B_{12} - B_{10}\,,
& 
a_{10} 
& =
- B_{12}\,,
\\
a_{8} 
& =
B_{12} - B_{8}\,,
&
a_{11} 
& =
- B_{12} + B_{8} - B_{11}\,,
\\
a_{9} 
& =
B_{12} - B_{8} + B_{11}\,,
&
a_{12} 
& =
B_{12}\,.
\end{align*}

Further, putting $a_{18}$ as a free variable $ B_{13} $, we get from the system of equations V
\begin{align*}
a_{13} 
& =
- B_{13}\,,
&
a_{16} 
& =
- B_{13} - B_{9}\,,
\\
a_{14} 
& =
B_{13} + B_{8} + B_{9}\,,
&
a_{17} 
& =
- B_{13} - B_{8} - B_{9} \,,
\\
a_{15} 
& =
B_{13} + B_{8} + B_{10} - B_{11}\,,
&
a_{18} 
& =
B_{13}\,.
\end{align*}

Finally, putting $a_{24}$ as a free variable $ B_{14} $, we get from the system of equations VI
\begin{align*}
a_{19} 
& =
- B_{14} + B_{10}\,,
&
a_{22} 
& =
- B_{14} + B_{9}\,,
\\
a_{20} 
& =
B_{14} - B_{9}\,,
&
a_{23} 
& =
- B_{14} + B_{9} + B_{11}\,,
\\
a_{21} 
& =
B_{14} - B_{10} \,,
&
a_{24} 
& =
B_{14}\,.
\end{align*}

Let us put $B_{12} = 1$ and the others free variables are vanishing. We get
$$
\Phi_{ijk} = - \varphi^m_i \,\big( \partial_m\psi_{jk}
- \partial_m\psi_{kj}
+  \partial_j\psi_{km}
- \partial_j\psi_{mk}
+  \partial_k\psi_{mj}
- \partial_k\psi_{jm}
\big)  
$$
which is the coordinate expression for a multiple of
$$
d(\Alt\psi) \circ_1\varphi\,.
$$
Similarly for  $B_{13} =1$ (respective  $B_{14} =1$) and the others free variables vanishing we get multiples of
$d(\Alt\psi) \circ_2\varphi$ (respective $d(\Alt\psi) \circ_3\varphi$).

\smallskip

If we put $ B_{8} = 1 $ and the others free variables vanishing we get
\begin{align*}
\Phi_{ijk}
& =
\varphi^m_i \, (\partial_k\psi_{mj} - \partial_m\psi_{kj} - \partial_j\psi_{mk})
+ \varphi^m_j \, ( \partial_i\psi_{km} + \partial_m\psi_{ik} - \partial_k\psi_{im})
\\
& 
+ (\psi_{mk} + \psi_{km})\,(\partial_i\varphi^m_j - \partial_j\varphi^m_i)
 \,.
\end{align*}
According to Remark \ref{Rm: 3.4} this operator corresponds in coordinates to 
$$
6 \, d(\Alt \psi)\circ_{1}\varphi -
6 \, d(\Alt(\psi\circ_{2}\varphi))
- \Phi_{2}(\varphi,\psi)\,. 
$$

\smallskip

If we put $ B_{9} = 1 $ and the others free variables vanishing we get
\begin{align*}
\Phi_{ijk}
& =
\varphi^m_j \, (\partial_i\psi_{km} - \partial_m\psi_{ki} - \partial_k\psi_{im})
+ \varphi^m_k \, ( - \partial_i\psi_{mj} + \partial_j\psi_{mi} + \partial_m\psi_{ij})
\\
&
+ (\psi_{im}+ \psi_{mi} ) \, (\partial_j\varphi^m_k  - \partial_k\varphi^m_j) 
\,.
\end{align*}
According to Remark \ref{Rm: 3.4} this operator corresponds in coordinates to 
$$
- 6\, d(\Alt \psi)\circ_{2}\varphi
+ 6 \, d(\Alt \psi)\circ_{3}\varphi
- \Phi_{1}(\varphi,\psi)
+ \Phi_{2}(\varphi,\psi)\,. 
$$

\smallskip

If we put $ B_{10} = 1 $ and the others free variables vanishing we get
\begin{align*}
\Phi_{ijk}
=
& - \varphi^m_i \, \partial_m\psi_{jk}
+ \varphi^m_j \,  \partial_m\psi_{ik}
+ \varphi^m_k \, ( \partial_i\psi_{jm} - \partial_j\psi_{im})
\\
& 
+ \psi_{jm} \, ( \partial_i\varphi^m_k  - \partial_k\varphi^m_i)
- \psi_{mk} \, (\partial_j\varphi^m_i -  \partial_i\varphi^m_j )
\\
& 
+ \psi_{im} \, ( \partial_k\varphi^m_j - \partial_j\varphi^m_k) 
 \,. 
\end{align*}

If we put $ B_{11} = 1 $ and the others free variables vanishing we get
\begin{align*}
\Phi_{ijk}
& =
 \varphi^m_i \, (\partial_j\psi_{mk} - \partial_k\psi_{mj})
- \varphi^m_j \, \partial_m\psi_{ik} 
+ \varphi^m_k \, \partial_m\psi_{ij}
\\
& \quad
+ \psi_{mj} \, ( \partial_i\varphi^m_k - \partial_k\varphi^m_i )
+ \psi_{mk} \, ( \partial_j\varphi^m_i - \partial_i\varphi^m_j ) 
\\
& \quad
+ \psi_{im} \, ( \partial_j\varphi^m_k - \partial_k\varphi^m_j ) \,.
\end{align*}

Then the sum of the last 2 operators, i.e. $ B_{10} = 1 = B_{11} $, gives
\begin{align*}
\Phi_{ijk}
 & =
 \varphi^m_i \, (\partial_j\psi_{mk} - \partial_k\psi_{mj}
     - \partial_m\psi_{jk})
+ \varphi^m_k \, ( \partial_m\psi_{ij} + \partial_i\psi_{jm} 
    - \partial_j\psi_{im})
\\
& \quad
+ (\psi_{jm} + \psi_{mj}) \, ( \partial_i\varphi^m_k 
    - \partial_k\varphi^m_i ) \,.
\end{align*}
According to Remark \ref{Rm: 3.4} this operator corresponds in coordinates to 
$$  
6\, d(\Alt \psi)\circ_{3} \varphi
- 6\, d(\Alt(\psi  \circ_{1}\varphi))
- \Phi_{1}(\varphi,\psi)   \,.
$$

On the other side for $ B_{10} = 1\,,\,\,  B_{11} = - 1 $ we have
\begin{align*}
\Phi_{ijk}
& = 
 \varphi^m_i \, ( - \partial_j\psi_{mk} + \partial_k\psi_{mj}
     - \partial_m\psi_{jk})
 + 2 \, \varphi^m_j \, \partial_m\psi_{ik}
 \\
 & \quad
+ \varphi^m_k \, ( - \partial_m\psi_{ij} + \partial_i\psi_{jm} 
    - \partial_j\psi_{im})
+ 2\, \psi_{im} \, ( \partial_k\varphi^m_j - \partial_j\varphi^m_k)
\\
& \quad 
+ (\psi_{jm} - \psi_{mj}) \, ( \partial_i\varphi^m_k 
    - \partial_k\varphi^m_i )
- 2 \, \psi_{mk} \, (\partial_j\varphi^m_i -  \partial_i\varphi^m_j )
 \,
\end{align*}
which, according to Remark \ref{Rm: 3.4}, corresponds in coordinates to
$$  
\Phi_{1}(\varphi,\psi) + \Phi_{2}(\varphi,\psi)\,.
$$
So all 14 independent operators are generated by 14 independent operators described in Lemma \ref{Lm: 3.3}, Lemma \ref{Lm: 3.2} and Theorem \ref{Th: 3.4}.
\end{proof}

\begin{rmk}\label{Rm: 3.5}
{\rm
Let $\psi$ be a 2--form. According to \cite[p. 69]{KolMicSlo93} we can define the Lie derivative of  $ \psi $ with respect to $ \varphi $ as
\begin{equation*}
L_{\varphi}\psi = [i_{\varphi},d]\psi = i_{\varphi} d\psi - d i_{\varphi}\psi\,
\end{equation*}
which is a 3-form. It is easy to see that
\begin{equation*}
L_{\varphi}\psi = d\psi \circ_{1} \varphi + 
d\psi \circ_{2} \varphi + d\psi \circ_{3} \varphi
- 2 \, d(\Alt(\psi \circ_1\varphi)) \,.
\end{equation*}
}
\end{rmk}

\section{Natural operators transforming (1,2)-tensor fields $ S $ and 1-forms $ \psi $ into (0,3)-tensor fields}

Let us recall that, according to Theorem \ref{Th: 1.1}, all natural operators transforming (1,2)-tensor fields $ S $ and 1-forms $ \psi $ into (0,3)-tensor fields are $\mathbb{R}$-bilinear and of order 1.

\subsection{General case}

We shall denote by $C^{1}_{i}S$, $ i = 1,2 $, the contraction with respect to the
corresponding indices.

\begin{lem}\label{Lm: 4.1}
We have 6 canonical natural differential operators given by 
$ C^{1}_{i}S\otimes d\psi $, $ i = 1,2 $, namely
\begin{align*}
&
(C^1_1  S)(X) \, d\psi(Y,Z)\,,\quad 
(C^1_1   S)(Y) \, d\psi(X,Z)\,,\quad
(C^1_1   S)(Z) \, d\psi(X,Y)\,,\quad
\\
&
(C^1_2 S)(X) \, d\psi(Y,Z)\,,\quad 
(C^1_2 S)(Y) \, d\psi(X,Z)\,,\quad
(C^1_2 S)(Z) \, d\psi(X,Y)\,.
\end{align*}
\end{lem}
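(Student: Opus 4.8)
The plan is to realize each of the six displayed expressions as a composite of standard natural operations, and then to check that these composites are exhausted by the list.

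First I would note that contracting the single contravariant index of the $(1,2)$-tensor field $S$ against either of its two covariant indices yields a $1$-form: in coordinates, if $S = S^{i}_{jk}\,\partial_{i}\otimes d^{j}\otimes d^{k}$, then $C^{1}_{1}S = S^{m}_{mk}\,d^{k}$ and $C^{1}_{2}S = S^{m}_{jm}\,d^{j}$, which are in general two different $1$-forms since $S$ is not assumed to have any symmetry. Contraction is a natural operation, so $C^{1}_{1}S$ and $C^{1}_{2}S$ are natural ($\mathbb{R}$-linear, zeroth-order) operators in $S$; the exterior derivative $d\psi$ is a natural $2$-form, $\mathbb{R}$-linear and of order $1$ in $\psi$; and the tensor product of a $1$-form and a $2$-form, followed by any permutation of the three covariant slots, is again natural. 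Hence each expression $(C^{1}_{i}S)\otimes d\psi$, with its slots distributed over the arguments $X,Y,Z$, is a natural $(0,3)$-tensor field, manifestly $\mathbb{R}$-bilinear in $(S,\psi)$ and of order $1$, in agreement with Theorem~\ref{Th: 1.1}.

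It then remains to count. For a fixed contraction $C^{1}_{i}S$ there are a priori $3!=6$ ways to assign the three covariant slots of $(C^{1}_{i}S)\otimes d\psi$ to $X,Y,Z$; but $d\psi$ is skew in its two arguments, so interchanging the two slots it occupies only changes the overall sign. Thus only the position of the slot fed to $C^{1}_{i}S$ matters, giving $3$ inequivalent operators, those in which $X$, $Y$, or $Z$ respectively is the argument of $C^{1}_{i}S$; together with the two choices $i=1,2$ this produces $2\times 3 = 6$ operators, exactly those listed. I do not anticipate a real difficulty here: the only points requiring care are to use the skew-symmetry of $d\psi$ so as not to double-count, to note that no further contraction --- of $S$ against $d\psi$, or between the remaining covariant slots --- is available for tensors of these valences, and to observe that the two contractions of $S$ stay distinct because no symmetry of $S$ is assumed.
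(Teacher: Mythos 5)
Your argument is correct and matches the paper's (implicit) reasoning: the paper states Lemma \ref{Lm: 4.1} without proof, treating it as evident that contraction, exterior differentiation, tensor product and slot permutation are natural operations, which is exactly what you verify, together with the correct count using the skew-symmetry of $d\psi$ and the distinctness of the two contractions of $S$.
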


\begin{lem}\label{Lm: 4.2}
We have 6 canonical natural differential operators given by the composition of $ S $ with $ d\psi $, namely
\begin{align*}
&
 d\psi (S(X,Y),Z)\,,\quad   d\psi (S(Y,X),Z)\,,\quad  d\psi (S(X,Z),Y)\,,
 \\
 &
d\psi (S(Z,X),Y)\,,\quad   d\psi (S(Y,Z),X)\,,\quad  d\psi (S(Z,Y),X)\,.
\end{align*}
\end{lem}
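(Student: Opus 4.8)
The plan is to verify that each of the six displayed expressions is a well-defined $(0,3)$-tensor field depending naturally, $\mathbb{R}$-bilinearly and in first order on the pair $(S,\psi)$, and then to check that the six are linearly independent and that they exhaust the expressions of this combinatorial shape.

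First I would recall that, since $S$ is a $(1,2)$-tensor field, the assignment $(X,Y)\mapsto S(X,Y)$ is $C^{\infty}(M)$-bilinear with values in vector fields, while $d\psi$ is a $2$-form, hence an antisymmetric $(0,2)$-tensor field depending linearly and in first order on $\psi$. Consequently, for any admissible distribution of the three slots, the evaluation $d\psi(S(-,-),-)$ is $C^{\infty}(M)$-trilinear in its three vector-field arguments, so it defines a $(0,3)$-tensor field. Being obtained by composing the natural operations ``exterior derivative'' and ``tensor evaluation/contraction'' — each of which commutes with pullback by local diffeomorphisms — the resulting operator $\Phi(S,\psi)$ is natural in the sense used throughout the paper. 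It is linear in $S$ (which enters only through $S(-,-)$) and linear in $\psi$ (since $d$ is $\mathbb{R}$-linear), hence $\mathbb{R}$-bilinear; and it is of order $0$ in $S$ and of order $1$ in $\psi$, so of order $1$ overall, in agreement with Theorem~\ref{Th: 1.1}. In coordinates, writing $S=S^{k}_{ij}\,\partial_{k}\otimes d^{i}\otimes d^{j}$ and $d\psi$ with components $\partial_{a}\psi_{b}-\partial_{b}\psi_{a}$, one gets expressions of the form $S^{m}_{ij}\,(\partial_{m}\psi_{k}-\partial_{k}\psi_{m})$ and its relabellings.

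Next I would enumerate the operators of this shape. The $2$-form $d\psi$ has two arguments and $S$ has two ordered arguments, so one must distribute the three vector fields $X,Y,Z$ over ``one free slot of $d\psi$'' and ``the two slots of $S$''. There are $3$ choices for which argument occupies the free slot of $d\psi$ and $2$ orderings for feeding the remaining two into $S$; since $d\psi$ is antisymmetric, placing the vector $S(-,-)$ in the first versus the second slot of $d\psi$ merely reverses the sign and produces nothing new. This yields exactly $3\cdot 2=6$ operators, which are the ones listed.

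Finally I would check linear independence: the six coordinate expressions carry pairwise distinct index patterns — they differ in which of $i,j,k$ is contracted into $S$ and in the order of the remaining two — so no nontrivial real linear combination of them can vanish identically; equivalently, one separates them by testing against suitably chosen $S$ and $\psi$. The only point demanding a little care is this last bookkeeping, namely making sure the antisymmetry of $d\psi$ does not identify two of the six with one another, but after the sign normalization above this is immediate. I do not expect a genuine obstacle in this lemma.
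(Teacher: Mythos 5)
Your proposal is correct: each displayed expression is indeed a contraction of the tensor fields $d\psi$ and $S$, hence a natural, $\mathbb{R}$-bilinear, first-order $(0,3)$-tensor-valued operator, and your counting ($3$ choices of the free slot of $d\psi$ times $2$ orderings of the arguments of $S$, the antisymmetry of $d\psi$ giving only sign changes) together with the independence check via the six pairwise distinct monomial patterns $S^{m}_{ij}\partial_{m}\psi_{k}$, $S^{m}_{ij}\partial_{k}\psi_{m}$, etc., is sound since $S$ carries no symmetry. The paper states this lemma without proof, treating it as an evident tensorial construction (its independence being implicit in the $19$-parameter count of Theorem~\ref{Th: 4.1}), so your argument is just the routine verification the paper omits, in the same spirit.
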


\begin{lem}\label{Lm: 4.3}
We have 6 canonical natural differential operators given by $ \psi \otimes d(C^{1}_{i}S) $, $ i= 1,2 $, namely
\begin{align*}
&
 \psi (X) d(C^1_1  S)(Y,Z)\,,\quad    \psi (Y) d(C^1_1  S)(X,Z)\,,\quad \psi (Z) d(C^1_1  S)(X,Y)\,,
 \\
 &
 \psi (X) d(C^{1}_{2}S)(Y,Z)\,,\quad    \psi (Y) d(C^{1}_{2}S)(X,Z)\,,\quad \psi (Z) d(C^{1}_{2}S)(X,Y\,.
\end{align*}
\end{lem}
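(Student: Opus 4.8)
The plan is to realize each of the six formulas as a composite of elementary natural operations, so that naturality, $\mathbb{R}$-bilinearity and first order are automatic, and then to check that there are exactly six of them and that they are independent. Concretely, I would first recall that contracting the single upper index of the $(1,2)$-tensor field $S$ against either of its two lower indices is a natural order-zero operation producing a $1$-form, in coordinates $(C^1_1 S)_j = S^m_{mj}$ and $(C^1_2 S)_i = S^m_{im}$. Next, the exterior derivative $d$ is a natural first-order operator carrying $1$-forms to $2$-forms, so $d(C^1_i S)$ is a natural first-order operator with values in $2$-forms, and the tensor product with the $1$-form $\psi$ is natural and of order zero. Composing these, for $i=1,2$ the assignment $(S,\psi)\mapsto \psi\otimes d(C^1_i S)$ is a natural, $\mathbb{R}$-bilinear differential operator
\[
C^{\infty}(T^{(1,2)}M)\times C^{\infty}(T^{(0,1)}M)\to C^{\infty}(T^{(0,3)}M)
\]
of order $1$, in accordance with Theorem \ref{Th: 1.1}.

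Then I would count the inequivalent operators obtained this way. Fixing the contraction, the $2$-form $\omega_i:=d(C^1_i S)$ must occupy two of the three covariant slots of the target and $\psi$ the remaining one; there are three such placements, giving $\psi(X)\,\omega_i(Y,Z)$, $\psi(Y)\,\omega_i(X,Z)$ and $\psi(Z)\,\omega_i(X,Y)$, where the antisymmetry of $\omega_i$ merely absorbs the reordering of its two slots into a sign and so does not create an extra operator. Letting $i$ run over $1,2$ yields the $3\times 2=6$ operators of the statement.

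The only point requiring a genuine (though short) argument is that these six operators are linearly independent. For this I would evaluate on a few test pairs: taking $S$ with a single off-diagonal component nonzero makes one of $C^1_1 S$, $C^1_2 S$ vanish while the other does not, which separates the two contraction families, and within each family the three operators are distinguished by which argument slot carries $\psi$; hence no nontrivial real linear combination of the six can vanish identically. I do not expect a real obstacle here: the whole lemma amounts to recognising that the listed tensorial formulas define natural operators plus the bookkeeping of how many placements of $\psi$ are inequivalent, and the only mild care needed is to verify that the antisymmetry of $d(C^1_i S)$ and the two available contractions are the sole sources of the number six.
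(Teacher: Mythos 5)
Your proposal is correct and coincides with the paper's (implicit) treatment: the paper states Lemma \ref{Lm: 4.3} without proof, regarding it as immediate because each formula is a composite of the natural tensorial operations of contraction, exterior differentiation and tensor product with the argument $\psi$ placed in one of the three covariant slots. Your explicit verification of naturality, order, the $3\times 2$ count, and the short independence check via test fields merely spells out what the paper takes for granted, so there is nothing to object to.
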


\begin{lem}\label{Lm: 4.4}
Let us assume the antisymmetric part $ \Alt S $ of $ S $ with the coordinate expression
$ \Alt S = \tfrac 12 (S^{i}_{jk} - S^{i}_{kj}) \, \partial_{i} \otimes d^{j} \otimes d^{k} $.
Then
\begin{equation}
d(\psi \circ \Alt S) 
\end{equation}
is a first order natural $\mathbb{R}$-bilinear differential operator with values in 3-forms.
\end{lem}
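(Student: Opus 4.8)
The plan is to verify directly that the operator $d(\psi\circ\Alt S)$ is well-defined, natural, and $\mathbb{R}$-bilinear, and that it takes values in 3-forms. First I would note that $\Alt S$ is the antisymmetric part of $S$ in its two lower indices, hence a genuine $(1,2)$-tensor field (it is a natural operation on $S$), and $\psi\circ\Alt S$ denotes the $(0,2)$-tensor field obtained by contracting the upper index of $\Alt S$ with $\psi$, i.e. $(\psi\circ\Alt S)_{jk}=\psi_i\,(\Alt S)^i_{jk}=\tfrac12\,\psi_i\,(S^i_{jk}-S^i_{kj})$. Since $\Alt S$ is antisymmetric in $j,k$, the tensor $\psi\circ\Alt S$ is a $2$-form, and therefore its exterior derivative $d(\psi\circ\Alt S)$ is a $3$-form, which in particular is a $(0,3)$-tensor field, as required.

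Next I would observe that each operation in the chain $S\mapsto\Alt S$, $(\Alt S,\psi)\mapsto\psi\circ\Alt S$, $\alpha\mapsto d\alpha$ is natural: antisymmetrization and contraction are pointwise tensorial (hence natural) operations, and the exterior derivative is a natural operator on forms. A composition of natural operators is natural, so $d(\psi\circ\Alt S)$ is a natural differential operator in $(S,\psi)$. For $\mathbb{R}$-bilinearity: $S\mapsto\Alt S$ is $\mathbb{R}$-linear, the contraction $(\Alt S,\psi)\mapsto\psi\circ\Alt S$ is $\mathbb{R}$-bilinear in its two arguments, and $d$ is $\mathbb{R}$-linear; composing, $d(\psi\circ\Alt S)$ is $\mathbb{R}$-linear in $S$ for fixed $\psi$ and $\mathbb{R}$-linear in $\psi$ for fixed $S$, hence $\mathbb{R}$-bilinear. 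Finally, it is of order one since $d$ raises the order by exactly one and $\psi\circ\Alt S$ is order zero in $(S,\psi)$; this is also consistent with Theorem~\ref{Th: 1.1}.

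It remains to record the coordinate expression to confirm it fits the general form \eqref{Eq: 1.3}. Writing $\chi_{jk}=(\psi\circ\Alt S)_{jk}=\tfrac12\,\psi_i\,(S^i_{jk}-S^i_{kj})$, one has
\begin{equation*}
d(\psi\circ\Alt S)=\big(\partial_i\chi_{jk}+\partial_j\chi_{ki}+\partial_k\chi_{ij}\big)\,d^i\wedge d^j\wedge d^k,
\end{equation*}
and expanding $\partial_i\chi_{jk}=\tfrac12\big((\partial_i\psi_l)(S^l_{jk}-S^l_{kj})+\psi_l(\partial_iS^l_{jk}-\partial_iS^l_{kj})\big)$ shows it is a sum of terms of the types $\psi\cdot\partial S$ and $S\cdot\partial\psi$, i.e. exactly the shape \eqref{Eq: 1.3} with appropriate invariant coefficient tensors. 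There is essentially no obstacle here: the only point requiring a moment of care is checking that $\psi\circ\Alt S$ is indeed antisymmetric (so that $d$ of it is a form and not merely a $(0,2)$-to-$(0,3)$ alternation), which is immediate from the antisymmetry of $\Alt S$ in its lower indices.
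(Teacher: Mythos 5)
Your proof is correct and follows essentially the same route the paper takes (the paper treats this lemma as immediate): $\Alt S$ and the contraction $\psi\circ\Alt S$ are pointwise tensorial, hence natural and bilinear, $\psi\circ\Alt S$ is a 2-form by the antisymmetry of $\Alt S$ in its lower indices, and applying the exterior derivative yields a first-order natural operator with values in 3-forms. Your extra check that the coordinate expression has the shape \eqref{Eq: 1.3} (up to a normalization constant depending on the wedge convention) is fine but not needed.
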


\begin{crl}
If the 1--form $ \psi $ is closed then we have only 7 operators from Lemma \ref{Lm: 4.3} and Lemma \ref{Lm: 4.4}.
\end{crl}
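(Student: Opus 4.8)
The plan is to read the statement off from the explicit description of the canonical operators assembled in Lemmas \ref{Lm: 4.1}--\ref{Lm: 4.4}, once it is granted (by the general classification of Section 4, obtained by the auxiliary linear connection method exactly as in Theorems \ref{Th: 3.2} and \ref{Th: 3.5}) that every natural $\mathbb{R}$-bilinear operator $\Phi(S,\psi)$ is an $\mathbb{R}$-linear combination of those $6+6+6+1$ operators. First I would record the elementary remark that $\psi$ being closed means exactly $d\psi=0$, i.e.\ $\partial_{i}\psi_{j}-\partial_{j}\psi_{i}=0$ in coordinates. Then I would inspect the four lemmas: each of the six operators of Lemma \ref{Lm: 4.1} has the form $(C^{1}_{i}S)(\,\cdot\,)\,d\psi(\,\cdot\,,\,\cdot\,)$ and each of the six operators of Lemma \ref{Lm: 4.2} has the form $d\psi(S(\,\cdot\,,\,\cdot\,),\,\cdot\,)$; every one of these twelve carries $d\psi$ as a factor, so all of them vanish identically once $\psi$ is closed. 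By contrast the six operators $\psi(\,\cdot\,)\,d(C^{1}_{i}S)(\,\cdot\,,\,\cdot\,)$ of Lemma \ref{Lm: 4.3}, which involve no derivative of $\psi$ at all, and the operator $d(\psi\circ\Alt S)$ of Lemma \ref{Lm: 4.4}, which differentiates $\psi$ but is not a multiple of $d\psi$, are not forced to vanish. This already yields the count $6+1=7$.

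Two small points have to be checked so that ``$7$'' is sharp rather than an upper bound. The first is that $d(\psi\circ\Alt S)$ does not degenerate on closed $\psi$: in coordinates $(\psi\circ\Alt S)_{jk}=\psi_{m}\,(\Alt S)^{m}_{jk}$, and $d(\psi\circ\Alt S)$ is the cyclic alternation of $\partial_{i}\!\big(\psi_{m}(\Alt S)^{m}_{jk}\big)$; evaluating on, say, $\dim M\ge 3$, a constant $1$-form $\psi=d^{1}$ and a $(1,2)$-tensor field $S$ whose antisymmetric part has $(\Alt S)^{1}_{jk}$ a non-closed $2$-form gives a nonzero value, so the operator of Lemma \ref{Lm: 4.4} stays nontrivial. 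The second is that the seven surviving operators remain $\mathbb{R}$-linearly independent; since a constant $1$-form is closed, it suffices to evaluate them on a single pair $(S,\psi)$ with $\psi$ constant and let the free components of $j^{1}S$ vary, which is exactly the independence computation already behind Lemmas \ref{Lm: 4.3} and \ref{Lm: 4.4}. Together with the previous paragraph, the surviving family is thus exactly $7$-dimensional.

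The last thing is to be sure that no genuinely new natural operator appears once the inputs are restricted to closed $1$-forms. For this I would observe that imposing $d\psi=0$ merely appends the homogeneous linear constraint $\partial_{[i}\psi_{j]}=0$ to the equivariance system produced by the reduction theorem and the auxiliary connection; this constraint can only annihilate the monomials in which $d\psi$ occurs, and never relaxes any of the equations satisfied by the coefficients $a_{\sigma},b_{\sigma}$, so the space of natural $\mathbb{R}$-bilinear operators $\Phi(S,\psi)$ with $\psi$ closed is precisely the image of the general classification under $d\psi\mapsto 0$. Hence that space is spanned by the six operators of Lemma \ref{Lm: 4.3} together with the operator of Lemma \ref{Lm: 4.4}, i.e.\ by seven operators, as claimed. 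The main obstacle is this last bookkeeping step --- confirming that closedness interacts with the already-established system of coefficient equations only by deleting the $d\psi$-terms and nothing else --- but given the explicit form of that system it is routine.
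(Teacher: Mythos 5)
Your proposal is correct and matches the paper's (implicit, unproved) argument: the corollary is immediate because every operator in Lemmas \ref{Lm: 4.1} and \ref{Lm: 4.2} carries $d\psi$ as a factor and so vanishes for closed $\psi$, leaving the $6+1=7$ operators of Lemmas \ref{Lm: 4.3} and \ref{Lm: 4.4}. Your additional final paragraph (that restricting to closed $\psi$ creates no new natural operators) goes beyond what the corollary as stated, and the paper, actually address, and its justification is only heuristic, but it does not affect the correctness of the main argument.
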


\begin{thm}\label{Th: 4.1}
All natural differential operators transforming a (1,2)-tensor field $ S $ and a 1-form $ \psi $ into (0,3)-tensor fields form a 19-parameter family of operators
described in Lemmas \ref{Lm: 4.1} -- \ref{Lm: 4.4}. 
\end{thm}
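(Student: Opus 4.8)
The plan is to repeat, for the pair $(S,\psi)$, the three-step argument used in the proofs of Theorems~\ref{Th: 3.2} and~\ref{Th: 3.5}. First I would apply Theorem~\ref{Th: 1.1} with $p=2$, $r=0$, $s=1$ (so that the hypotheses $p>1$ and $s>r$ hold), which gives that $\Phi$ is $\mathbb{R}$-bilinear and of order $1$. By \eqref{Eq: 1.3}--\eqref{Eq: 1.5} one may then write $\Phi(S,\psi)=\Phi_{ijk}\,d^i\otimes d^j\otimes d^k$, where $\Phi_{ijk}$ is a universal real-linear combination of all admissible index contractions of $S^m_{\bullet\bullet}\,\partial_\bullet\psi_\bullet$ and of $\psi_\bullet\,\partial_\bullet S^m_{\bullet\bullet}$. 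Classifying how the summation index $m$ is absorbed --- into a lower slot of $S$ (producing $C^1_1S$ or $C^1_2S$), into the derivative, or into $\psi$ --- one finds $24$ coefficients $a_1,\dots,a_{24}$ on the $S\cdot\partial\psi$ terms and $24$ coefficients $b_1,\dots,b_{24}$ on the $\psi\cdot\partial S$ terms, exactly as in the proof of Theorem~\ref{Th: 3.5}.

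Second, I would introduce an auxiliary symmetric linear connection $K$, replace every partial derivative in $\Phi_{ijk}$ by the corresponding covariant derivative with respect to $K$, and impose that the result be independent of $K$, as in \cite[p.~144]{KruJan90}. Writing $\nabla^KS$ and $\nabla^K\psi$ in terms of $\partial$ and the symbols $K_j{}^i{}_k$, the $K$-dependent part of $\Phi_{ijk}$ becomes a sum of terms of the shapes $K_a{}^p{}_b\,S^m_{\bullet\bullet}$, $K_a{}^p{}_b\,(C^1_iS)_\bullet$, $K_p{}^p{}_m\,(\cdots)$ and $K_a{}^p{}_b\,\psi_\bullet$, each with a definite distribution of the free indices $i,j,k$. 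Since these connection-coefficient tensors are linearly independent, the vanishing of the $K$-part is equivalent to a homogeneous linear system in $a_1,\dots,a_{24},b_1,\dots,b_{24}$; as in the proof of Theorem~\ref{Th: 3.5} it splits into blocks according to which factor --- $S^m_{m\bullet}$, $S^m_{\bullet m}$, the untraced $S^m_{\bullet\bullet}$, or $\psi$ --- stands in front of the given connection symbol.

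Solving this system is the bulk of the work and the main obstacle: it is a somewhat larger counterpart of the systems in the proof of Theorem~\ref{Th: 3.5}, and one must keep careful track of the coupling between the blocks arising from contractions of $S$ (which supply the free parameters of Lemmas~\ref{Lm: 4.1} and~\ref{Lm: 4.3}) and those arising from the genuinely $(1,2)$-valued part of $S$. I expect the solution space to be exactly $19$-dimensional. Finally, in the identification step I would exhibit the $6+6+6+1=19$ operators of Lemmas~\ref{Lm: 4.1}--\ref{Lm: 4.4} as elements of this space --- each is manifestly natural, being built from $S$ and $\psi$ by tensor products, permutations of arguments, contractions and exterior differentiation --- and check that they are linearly independent, using that the occurrences of $d\psi$, $d(C^1_iS)$ and $d(\psi\circ\Alt S)$ enforce precisely the antisymmetrisations needed to preclude hidden relations. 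A dimension count then shows they form a basis, which is the assertion of the theorem; the closed-$\psi$ corollary follows at once by discarding the twelve operators of Lemmas~\ref{Lm: 4.1} and~\ref{Lm: 4.2} that carry the factor $d\psi$.
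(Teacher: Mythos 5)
Your plan follows the paper's own strategy exactly: invoke Theorem~\ref{Th: 1.1} with $p=2$, $r=0$, $s=1$, write the general first-order bilinear ansatz with $24$ coefficients $a_i$ and $24$ coefficients $b_i$, replace partial derivatives by covariant derivatives with respect to an auxiliary symmetric connection $K$, and demand independence of $K$. Up to that point the proposal is sound. But the step you yourself flag as ``the bulk of the work and the main obstacle'' --- solving the resulting homogeneous linear system and showing its solution space has dimension exactly $19$, spanned by the operators of Lemmas~\ref{Lm: 4.1}--\ref{Lm: 4.4} --- is not carried out; you only write that you ``expect'' the solution space to be $19$-dimensional. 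Since the entire content of the theorem is precisely this dimension count together with the identification of a spanning set, what you have is a correct proof scheme, not a proof.

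Concretely, the paper's argument extracts from the $K$-independence condition the explicit relations: the coefficients of the $K_p{}^p{}_m$- and mixed terms force $b_5=b_6=b_{11}=b_{12}=b_{17}=b_{18}=0$ together with $b_3=-b_1$, $b_4=-b_2$, $b_9=-b_7$, $b_{10}=-b_8$, $b_{15}=-b_{13}$, $b_{16}=-b_{14}$ (yielding the six operators of Lemma~\ref{Lm: 4.3}), and $a_{2}=-a_1,\dots,a_{12}=-a_{11}$ in pairs (yielding the six operators of Lemma~\ref{Lm: 4.1}); the block in $b_{19},\dots,b_{24}$ has exactly one free parameter $B$, and the final block ties $a_{13},\dots,a_{24}$ to $B$, so that $B=0$ gives the six operators of Lemma~\ref{Lm: 4.2} and $B=1$ gives, after a short coordinate computation, a multiple of $d(\psi\circ\Alt S)$ from Lemma~\ref{Lm: 4.4}. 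This last identification --- recognizing that the single extra parameter beyond the obviously tensorial operators produces precisely $d(\psi\circ\Alt S)$ and nothing else --- is the nontrivial part your proposal leaves open; likewise the linear independence of the $19$ operators, which you assert but do not verify. To complete the proof you would need to display and solve the system as above (or an equivalent bookkeeping) rather than appeal to an expected dimension.
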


\begin{proof}
According to Theorem \ref{Th: 1.1} and \eqref{Eq: 1.3} -- \eqref{Eq: 1.5} 
\begin{align*}
\Phi(S,\psi)
& = 
\Phi_{ijk}  \, d^{i} \otimes d^{j} \otimes d^{k}\,,  
\end{align*}
where 
\begin{align*}
\Phi_{ijk}
&=
a_1\, S^{m}_{mi} \, \partial_j\psi_{k}
+ a_2\, S^{m}_{mi} \, \partial_k\psi_{j}
+ a_3\, S^{m}_{mj} \, \partial_i\psi_{k}
+ a_4\, S^{m}_{mj} \, \partial_k\psi_{i}
\\
& \qquad
+ a_5\, S^{m}_{mk} \, \partial_i\psi_{j}
+ a_6\, S^{m}_{mk} \, \partial_j\psi_{i}
\\
&\quad
+ a_7\, S^{m}_{im} \, \partial_j\psi_{k}
+ a_8\, S^{m}_{im} \, \partial_k\psi_{j}
+ a_9\, S^{m}_{jm} \, \partial_i\psi_{k}
+ a_{10}\, S^{m}_{jm} \, \partial_k\psi_{i}
\\
& \qquad
+ a_{11}\, S^{m}_{km} \, \partial_i\psi_{j}
+ a_{12}\, S^{m}_{km} \, \partial_j\psi_{i}
\\
& \quad
+ a_{13}\, S^{m}_{ij} \, \partial_m\psi_{k}
+ a_{14}\, S^{m}_{ji} \, \partial_m\psi_{k}
+ a_{15}\, S^{m}_{ik} \, \partial_m\psi_{j}
+ a_{16}\, S^{m}_{ki} \, \partial_m\psi_{j}
\\
& \qquad
+ a_{17}\, S^{m}_{jk} \, \partial_m\psi_{i}
+ a_{18}\, S^{m}_{kj} \, \partial_m\psi_{i}
\\
& \quad
+ a_{19}\, S^{m}_{ij} \, \partial_k\psi_{m}
+ a_{20}\, S^{m}_{ji} \, \partial_k\psi_{m}
+ a_{21}\, S^{m}_{ik} \, \partial_j\psi_{m}
+ a_{22}\, S^{m}_{ki} \, \partial_j\psi_{m}
\\
& \qquad
+ a_{23}\, S^{m}_{jk} \, \partial_i\psi_{m}
+ a_{24}\, S^{m}_{kj} \, \partial_i\psi_{m}
\\
& \quad
+ b_{1}\, \psi_{i} \, \partial_j S^{m}_{km}
+ b_{2}\, \psi_{i} \, \partial_j S^{m}_{mk}
+ b_{3}\, \psi_{i} \, \partial_k S^{m}_{jm}
+ b_{4}\, \psi_{i} \, \partial_k S^{m}_{mj} 
\\
& \qquad
+ b_{5}\, \psi_{i} \, \partial_m S^{m}_{jk}
+ b_{6}\, \psi_{i} \, \partial_m S^{m}_{kj}
\\
& \quad
+ b_{7}\, \psi_{j} \, \partial_i S^{m}_{km}
+ b_{8}\, \psi_{j} \, \partial_i S^{m}_{mk}
+ b_{9}\, \psi_{j} \, \partial_k S^{m}_{im} 
+ b_{10}\, \psi_{j} \, \partial_k S^{m}_{mi} 
\\
& \qquad
+ b_{11}\, \psi_{j} \, \partial_m S^{m}_{ik}
+ b_{12}\, \psi_{j} \, \partial_m S^{m}_{ki}
\\
& \quad
+ b_{13}\, \psi_{k} \, \partial_i S^{m}_{jm} 
+ b_{14}\, \psi_{k} \, \partial_i S^{m}_{mj} 
+ b_{15}\, \psi_{k} \, \partial_j S^{m}_{im} 
+ b_{16}\, \psi_{k} \, \partial_j S^{m}_{mi} 
\\
& \qquad
+ b_{17}\, \psi_{k} \, \partial_m S^{m}_{ij}  
+ b_{18}\, \psi_{k} \, \partial_m S^{m}_{ji} 
\\
& \quad
+ b_{19}\, \psi_{m} \, \partial_i S^{m}_{jk} 
+ b_{20}\, \psi_{m} \, \partial_i S^{m}_{kj}
+ b_{21}\, \psi_{m} \, \partial_j S^{m}_{ik} 
+ b_{22}\, \psi_{m} \, \partial_j S^{m}_{ki}
\\
& \qquad
+ b_{23}\, \psi_{m} \, \partial_k S^{m}_{ij}
+ b_{24}\, \psi_{m} \, \partial_k S^{m}_{ji}\,.
\end{align*}
In order to calculate relations for coefficients $ a_i,\,b_i $, $i=1,\dots,24$, we use the method of an auxiliary linear symmetric connection $K$, \cite{KruJan90}.
We replace derivatives of tensor fields with covariant derivatives and assume that the operator is independent of $K$. Then we get
\begin{align*}
0
& =
\psi_{p} \, \big[ S^{m}_{mi} \, \big( a_1 
+  
a_2 \big) \, K_k{}^p{}_j 
+ 
S^{m}_{im} \, \big( a_7 
+  
a_8 \big)\, K_k{}^p{}_j  
\\
& \quad +
S^{m}_{mj} \, \big( a_3 
+  
a_4 \big) \, K_k{}^p{}_i  
+ 
S^{m}_{jm} \, \big( a_9 
+  
a_{10} \big) \, K_k{}^p{}_i 
\\
& \quad +
S^{m}_{mk} \, \big( a_5 
+  
a_6 \big) \, K_j{}^p{}_i  
+ 
S^{m}_{km} \, \big( a_{11} 
+  
a_{12} \big) \, K_j{}^p{}_i 
\\
& \quad +
S^{m}_{ij} \, \big( a_{13} 
+  
a_{19} - b_{23}\big) \, K_k{}^p{}_m  
+ 
S^{m}_{ji} \, \big( a_{14} 
+  
a_{20} - b_{24}\big) \, K_k{}^p{}_m 
\end{align*}
\begin{align*}
& \quad +
S^{m}_{ik} \, \big( a_{15} 
+  
a_{21}  - b_{21}\big) \, K_j{}^p{}_m  
+ 
S^{m}_{ki} \, \big( a_{16} 
+  
a_{22} - b_{22}\big) \, K_j{}^p{}_m 
\\
& \quad +
S^{m}_{jk} \, \big( a_{17} 
+  
a_{23} - b_{19}\big) \, K_i{}^p{}_m 
+ 
S^{m}_{kj} \, \big( a_{18} 
+  
a_{24} - b_{20}\big) \, K_i{}^p{}_m \big] 
\\
& \quad +
\psi_{m} \, \big[ (b_{19} + b_{21})\,  K_i{}^p{}_j \, S^{m}_{pk} + (b_{19} + b_{24})\,K_i{}^p{}_k \, S^{m}_{jp}    
\\
& \qquad +
    (b_{20} + b_{23}) \,  K_i{}^p{}_k \, S^{m}_{pj} + (b_{20} + b_{22}) \,  K_i{}^p{}_j \, S^{m}_{kp}
\\
& \qquad +
    (b_{21} + b_{23}) \,  K_j{}^p{}_k \, S^{m}_{ip}
 +
    (b_{22} + b_{24}) \,  K_j{}^p{}_k \, S^{m}_{pi} 
\big]    
\\
& \quad +
\psi_{i} \, \big[ (b_{1} + b_{3}) \,  K_j{}^p{}_k \, S^{m}_{pm} + ( b_{2} + b_{4}) \,  K_j{}^p{}_k \, S^{m}_{mp} 
\\
& \qquad +
    b_{5}\, ( K_m{}^p{}_j \, S^{m}_{pk} + K_m{}^p{}_k \, S^{m}_{jp}
    - K_m{}^m{}_p \, S^{p}_{jk})
    \\
& \qquad +
    b_{6}\, ( K_m{}^p{}_k \, S^{m}_{pj} + K_m{}^p{}_j \, S^{m}_{kp}
    - K_m{}^m{}_p \, S^{p}_{kj})
\big]
\\
& \quad +
\psi_{j} \, \big[ ( b_{7} + b_{9} ) \,  K_i{}^p{}_k \, S^{m}_{pm} + ( b_{8}  + b_{10} ) \,  K_i{}^p{}_k \, S^{m}_{mp} 
\\
& \qquad +
    b_{11}\, ( K_m{}^p{}_i \, S^{m}_{pk} + K_m{}^p{}_k \, S^{m}_{ip}
    - K_m{}^m{}_p \, S^{p}_{ik})
    \\
& \qquad +
    b_{12}\, ( K_m{}^p{}_k \, S^{m}_{pi} + K_m{}^p{}_i \, S^{m}_{kp}
    - K_m{}^m{}_p \, S^{p}_{ki})
\big]
\\
& \quad +
\psi_{k} \, \big[ ( b_{13} + b_{15} ) \,  K_i{}^p{}_j \, S^{m}_{pm} + ( b_{14}  + b_{16} ) \,  K_i{}^p{}_j \, S^{m}_{mp} 
\\
& \qquad +
    b_{17}\, ( K_m{}^p{}_i \, S^{m}_{pj} + K_m{}^p{}_j \, S^{m}_{ip}
    - K_m{}^m{}_p \, S^{p}_{ij})
    \\
& \qquad +
    b_{18}\, ( K_m{}^p{}_j \, S^{m}_{pi} + K_m{}^p{}_i \, S^{m}_{jp}
    - K_m{}^m{}_p \, S^{p}_{ji})
\big]\,.
\end{align*}

So, we get $b_{3} = - b_{1}$, $b_{4} = - b_{2}$, $b_{9} = - b_{7}$, $b_{10} = - b_{8}$, $b_{16} = - b_{14}$, $b_{15} = - b_{13}$ and $b_{5} =  b_{6} =  b_{11} =  b_{12} =  b_{17} =  b_{18} = 0$. This corresponds to linear combination of operators from Lemma \ref{Lm: 4.3}.

Further $a_{2} = - a_{1}$, $a_{4} = - a_{3}$, $a_{8} = - a_{7}$, $a_{10} = - a_{9}$,  $a_{6} = - a_{5}$ and  $a_{12} = - a_{11}$ which gives a linear combinations of operators from Lemma \ref{Lm: 4.1}.

For coefficients $ b_{19}, \dots , b_{24} $ we obtain the following system of homogeneous linear equations.
$$
\begin{matrix}
b_{19} & & + b_{21} & & & & = & 0\,,
\\
b_{19} & &  & & & + b_{24} & = & 0\,,
\\
 & b_{20} &  & & + b_{23} & & = & 0\,,
\\
 & b_{20} &  & + b_{22} &  & & = & 0 \,,
\\
 &  & b_{21}  &  & + b_{23} & & = & 0 \,,
\\
 & &  &  b_{22} &  & + b_{24} & = & 0  \,. 
\end{matrix}
$$
This system of equations has one free variable and if we put $ b_{24} = - B $ we obtain
$b_{19} = b_{22} = b_{23} = B$ and $ b_{20} = b_{21} = b_{24} = - B $. 

Finally, we have the system of linear equations
$$
\begin{matrix}
a_{13}  +  a_{19}  -  b_{23}  =  0\,,
\quad &
a_{14}  +  a_{20}  -  b_{24}  =  0\,,
\\
a_{15}  +  a_{21}  -  b_{21}  =  0\,,
\quad &
a_{16}  +  a_{22}  -  b_{22}  =  0 \,,
\\
a_{17}  +  a_{23}  -  b_{19}  =  0\,,
\quad &
a_{18}  +  a_{24}  -  b_{20}  =  0 \,.  
\end{matrix}
$$
It gives the following operators
\begin{align*}
\Phi_{ijk}
&  =
a_{13}\, S^{m}_{ij} \,( \partial_m\psi_{k} - \partial_k\psi_{m})
+ B\, (S^{m}_{ij} \, \partial_k\psi_{m} + \psi_{m} \, \partial_k S^{m}_{ij})
\\
& \quad
+ a_{14}\, S^{m}_{ji} \, ( \partial_m\psi_{k} - \partial_k\psi_{m})
- B \, ( S^{m}_{ji} \,\partial_k  \psi_{m} + \psi_{m} \, \partial_k S^{m}_{ji})
\\
& \quad
+ a_{15}\, S^{m}_{ik} \,( \partial_m\psi_{j} -  \partial_j\psi_{m})
- B \,( S^{m}_{ik} \, \partial_j\psi_{m} + \psi_{m} \, \partial_j S^{m}_{ik}) 
\\
& \quad
+ a_{16}\, S^{m}_{ki} \,( \partial_m\psi_{j} - \partial_j\psi_{m})
+ B\, ( S^{m}_{ki} \, \partial_j\psi_{m} + \psi_{m} \, \partial_j S^{m}_{ki})
\\
& \quad
+ a_{17}\, S^{m}_{jk} \, ( \partial_m\psi_{i} - \partial_i\psi_{m})
+ B\, (S^{m}_{jk} \, \partial_i\psi_{m}
+  \psi_{m} \, \partial_i S^{m}_{jk} )
\\
& \quad
+ a_{18}\, S^{m}_{kj} \, ( \partial_m\psi_{i} - \partial_i\psi_{m} )
- B \,( S^{m}_{kj} \, \partial_i\psi_{m} 
+  \psi_{m} \, \partial_i S^{m}_{kj})\,.
\end{align*}

Now, if we put $ B = 0$, we get a linear combination of operators from Lemma \ref{Lm: 4.2}.

Finally, putting $B = 1$ and the others free variables are vanishing, we obtain
\begin{align}  \label{Eq: 4.3}
\Phi_{ijk}
&  =
S^{m}_{ij} \, \partial_k\psi_{m} 
+ \psi_{m} \, \partial_k S^{m}_{ij}
- S^{m}_{ji} \,\partial_k  \psi_{m} - \psi_{m} \, \partial_k S^{m}_{ji}
\\
& \quad\nonumber
- S^{m}_{ik} \, \partial_j\psi_{m} 
- \psi_{m} \, \partial_j S^{m}_{ik}
+ S^{m}_{ki} \, \partial_j\psi_{m} 
+ \psi_{m} \, \partial_j S^{m}_{ki}
\\
& \quad\nonumber
+ S^{m}_{jk} \, \partial_i\psi_{m} 
+  \psi_{m} \, \partial_i S^{m}_{jk}
-  S^{m}_{kj} \, \partial_i\psi_{m} 
-  \psi_{m} \, \partial_i S^{m}_{kj} 
 \\
& \quad\nonumber
 + \psi_{m} \, \big(\partial_i S^{m}_{jk} 
 -  \partial_i S^{m}_{kj}
-  \partial_j S^{m}_{ik} 
+ \partial_j S^{m}_{ki}
+  \partial_k S^{m}_{ij}
-  \partial_k S^{m}_{ji}
\big)
\\
& = \nonumber
(S^{m}_{ij}  
- S^{m}_{ji} ) \,\partial_k  \psi_{m} 
+ (S^{m}_{ki}  
- S^{m}_{ik} )\, \partial_j\psi_{m} 
+ (S^{m}_{jk}  
-  S^{m}_{kj} )\, \partial_i\psi_{m}  
 \\
& \quad\nonumber
 + \psi_{m} \, \big(\partial_i S^{m}_{jk} 
 -  \partial_i S^{m}_{kj}
+ \partial_j S^{m}_{ki}
-  \partial_j S^{m}_{ik} 
+  \partial_k S^{m}_{ij}
-  \partial_k S^{m}_{ji}
\big)
 \,.
\end{align}
The operator $ \Phi(S,\psi) $ defined by \eqref{Eq: 4.3} is a multiple of
\begin{align*}
 d(\psi \circ \Alt S) 
\end{align*}
described in Lemma \ref{Lm: 4.4}. So all natural operators are linear combinations of 19 operators from
Lemmas \ref{Lm: 4.1} -- \ref{Lm: 4.4}.
\end{proof}

\subsection{The case of tangent-valued 2-forms}

Now, we assume that $ S $ is a tangent valued 2-form, i.e. $ \Alt S = S $.
Then there are 3 independent operators given by Lemma \ref{Lm: 4.1}, 3 independent operators given by Lemma \ref{Lm: 4.2} and 3 independent operators given by Lemma \ref{Lm: 4.3}. 

\begin{rmk}\label{Rm: 4.1}
{\rm
If $ S $ is a tangent-valued 2-form, then we have the Yano-Ako operator, \cite{YanAko68}, defined as 
\begin{align*}
\Phi(S,\psi)(X,Y,Z)
& =
(L_{S(X,Y)}\psi)(Z) - (L_{X}(\psi\circ S))(Z,Y)
\\
&\quad 
- (L_{Y}(\psi\circ S))(X,Z)
+ (\psi\circ S)([X,Y],Z)\,.
\end{align*}
We can express this operator as the linear combination of the basic operators in the form
\begin{align*}
\Phi(S,\psi)(X,Y,Z)
& =
d(\psi \circ S)(X,Z,Y) + d\psi(S(X,Y),Z)\,.
\end{align*}
}
\end{rmk}

\begin{rmk}\label{Rm: 4.2}
{\rm 
Let as assume that $ S $ is a tangent-valued 2-form.
According to \cite{FroNij56} and \cite[p. 69]{KolMicSlo93} we can define the Lie derivative of $ \psi $ with respect to $ S $ as
\begin{equation*}
L_{S}\psi = [i_{S},d]\psi = i_{S} d\psi + d i_{S}\psi\,
\end{equation*}
which is a 3-form. It is easy to see that
\begin{align*}
(L_{S}\psi)(X,Y,Z) 
& = d\psi(S(X,Y),Z) + 
d\psi(S(Y,Z),X) + d\psi(S(Z,X),Y)
\\
& \quad
+ d(\psi \circ S)(X,Y,Z)\,.
\end{align*}
}
\end{rmk}

\begin{rmk}\label{Rm: 4.3}
{\rm
If $ S $ is a tangent-valued 2-form, 
then we can consider the Lie derivation of $ \psi \circ S $ with respect to the identity tensor $ \mathbb{I} $
and obtain the tangent valued 3-form
\begin{align*}
L_{\mathbb{I}}(\psi \circ S) = i_{\mathbb{I}} d (\psi \circ S) - di_{\mathbb{I}} (\psi \circ S) = d(\psi \circ S)\,.
\end{align*}

The Yano-Ako operator is antisymmetric in the first two arguments. On the other hand $L_S\psi$ and $  L_{\mathbb{I}}(\psi \circ S) $ have values in 3-forms. If we assume the antisymmetrization of the Yano-Ako operator we get the following identity, \cite{YanAko68},
\begin{align*}
3 \Alt \Phi(\psi,S)
=
L_S \psi + 2\, L_{\mathbb{I}}(\psi \circ S)\,.
\end{align*}
}
\end{rmk}




\begin{thebibliography}{12}

\bibitem{FroNij56} 
        {\sc A. Fr\"olicher, A. Nijenhuis}:
        \emph{ Theory of vector-valued differential forms, I, II}, 
        Nederl. Akad. Wetensch. Proc. Ser. A. {\bf 59} (1956) 
        338--350, 351--359.      
\bibitem{KolMicSlo93}
        {\sc I. Kol\' a\v r, P. W. Michor, J. Slov\' ak}:
        {Natural Operations in  Differential Geometry,}
        Springer--Verlag 1993.
\bibitem{KruJan90}
{\sc D.~Krupka, J.~Jany\v{s}ka}:
        {Lectures on Differential
        Invariants}, Folia Fac. Sci. Nat. Univ. Purkynianae Brunensis,
        Brno, 1990.        
\bibitem{Sal10}
{\sc A. Salimov}:
        {\em On operators associated with tensor fields},
        J. Geom. {\bf 99} (2010) 107–145, 
        DOI 10.1007/s00022-010-0059-6.
\bibitem{Sch53}
{\sc J. A. Schouten}:
{\em On the differential operators of first order in tensor calculus,}
Rapport ZA 1953-012, Math. Centrum Amsterdam. (1953), 6pp.
\bibitem{Sch54}
{\sc J. A. Schouten}:
        {Ricci-Calculus: An Introduction to Tensor Analysis and Its Geometrical Applications (2nd ed.)}, 
Springer-Verlag Berlin Heidelberg
        1954.
\bibitem{YanAko68}
{\sc K. Yano, M. Ako}:
        {\em On certain operators associated with tensor fields},
        Kodai Math. Sem. Rep. {\bf 20} (1968) 414-436.

\end{thebibliography}
\end{document}